\documentclass[12pt]{amsart} 

\usepackage{custom_preamble,tikz}
\usetikzlibrary{arrows}

\begin{document}

\title{Coset decision trees and the Fourier algebra}

\author{\tsname}
\address{\tsaddress}
\email{\tsemail}

\maketitle

\section{Introduction}\label{sec.introduction}

The work of this paper can be viewed in three ways:
\begin{itemize}
\item as a relationship between Boolean functions with small spectral norm and certain decision trees;
\item as a description of integer-valued functions in the Fourier algebra of a finite group;
\item as an example of how to extend certain additive combinatorial results to a non-abelian setting.
\end{itemize}
The books \cite{odo::1}, \cite{rud::1} and \cite{taovu::} provide background for each of these three perspectives respectively.

We discuss the first two in order in the introduction, and those interested in the third can move to \S\ref{sec.over} (though it may be worth noting a few basic definitions from \S\ref{sec.not} first).

The purpose of the paper is to develop various arguments which currently exist for abelian group in a more general setting.  It may well be worth first consulting some of the abelian material, for example the paper \cite{shptalvol::1} which gives a good introduction from the perspective of computer science, or \cite{gresan::0} for the additive combinatorial view point.

Given a finite abelian group $G$ we write $\wh{G}$ for its dual group, that is the set of homomorphisms $G \rightarrow S^1$ where $S^1:=\{z \in \C: |z|=1\}$.  The \textbf{Fourier transform} and \textbf{algebra norm} (sometimes called the \textbf{spectral norm}) of $f:G \rightarrow \C$ are defined by
\begin{equation}\label{eqn.sn}
\wh{f}:\wh{G} \rightarrow \C; \gamma\mapsto \E_{x \in G}{f(x)\overline{\gamma(x)}} \text{ and } \|f\|_{A(G)}:=\|\wh{f}\|_{\ell_1(\wh{G})}=\sum_\gamma{|\wh{f}(\gamma)|}.
\end{equation}

The paper \cite{kusman::} was one of the early papers to study the class of Boolean functions with small algebra norm, and amongst other things they showed that such functions can be efficiently learnt both randomly \cite[Theorem 4.2]{kusman::} and deterministically \cite[Theorem 4.12]{kusman::}.  Their arguments are based around algorithms for finding significant Fourier coefficients and these have been generalised (from their setting of $G=(\Z/k\Z)^n$) to general finite abelian groups in \cite{aka::0} leading to analogous learning results.  (The case of cyclic groups is already rather different to the situation of cubes considered in \cite{kusman::}.  In this case a deterministic  learning algorithm complementing \cite[Theorem 4.12]{kusman::} is given in \cite[Theorem 1]{aka::3}, and using some explicit constructions also developed in \cite{aka::2} and \cite{aka::1}.) There have also been extensions to (possibly non-abelian) $p$-groups\footnote{Boneh notes all such groups are nilpotent and hence monomial groups which helps with their representation theory.} by Boneh in \cite{bon::0} which can be used to produced a randomised efficient learning algorithm in that setting. 

In potentially non-abelian groups we need to give a slightly different definition of the algebra norm.  One can replace $\wh{G}$ above by the set of irreducible representations and extend the definitions in (\ref{eqn.sn}).  This is the approach taken in \cite[\S3]{bon::0} but we shall proceed slightly differently avoiding any representation theory.

Given a finite group $G$, any $f:G \rightarrow \C$ naturally induces a linear operator
\begin{equation*}
L_2(G) \rightarrow L_2(G); g \mapsto \left(x\mapsto f \ast g(x):=\E_{y\in G}{f(y)g(y^{-1}x)}\right),
\end{equation*}
and we define the \textbf{algebra norm} of $f$, written $\|f\|_{A(G)}$, to be the the trace-norm of this operator or, equivalently, the sum of its singular values\footnote{For a definition (of what are there called singular numbers) see \cite[Chapter III.G, \S6]{woj::}.}.  This norm makes the space of complex-valued functions on $G$ into a complex Banach algebra.

When $G$ is abelian the Fourier transform gives a unitary map $L_2(G) \rightarrow \ell_2(\wh{G})$ (simultaneously) diagonalising these operators.  The values on the diagonal are just the Fourier coefficients $\wh{f}(\gamma)$ and so the trace norm is exactly $\|\wh{f}\|_{\ell_1(\wh{G})}$ and our new definition agrees with (\ref{eqn.sn}).

Returning to \cite{kusman::}, Kushilevitz and Mansour also showed that Boolean functions that can be computed by small decision trees have small algebra norm.  To explain this it will be helpful to have a little more notation.

Suppose that we have a set $\mathcal{W}$ of subsets of $X$ where for each $W \in \mathcal{W}$ and $x \in X$ it is cheap to determine whether $x \in W$.  We define a \textbf{(binary) $\mathcal{W}$-decision tree} to be a rooted binary tree in which each leaf is labelled with $0$ or $1$; each internal vertex $v$ is labelled with some $W_v\in \mathcal{W}$; and the two outgoing edges of $v$ with $0$ and $1$.  Given $x \in X$, the decision tree $T$ constructs a computation path from the root to a leaf: when the path reaches vertex $v$ it follows the outgoing edge labelled $1$ if $x\in W_v$ and $0$ otherwise.  The output of $T$ on input $x$ is the value of the leaf. (\emph{c.f.} \cite[Definition 3.13]{odo::1}.)

An example of a $\mathcal{W}$-decision tree computing the Boolean function
\begin{equation*}
f=1_{W_0}1_{W_1}1_{W_3} + 1_{W_0}(1-1_{W_1}) + (1-1_{W_0})(1-1_{W_2})1_{W_5} \text{ where } W_0,\dots,W_5 \in \mathcal{W}
\end{equation*}
is given in Figure \ref{fig.example}.  (Of course it would be more efficient to replace the $f_4$ node since both its out-edges are leaves with value $0$; we leave it in for illustrative purposes.)
\begin{figure}
\centering
\begin{tikzpicture}[->,level/.style={sibling distance=70mm/#1},level distance=50pt]
\node [circle,draw] {$W_0$}
  child {node [circle,draw]  {$W_1$}
    child {node [circle,draw]  {$W_3$}
      child {node [rectangle,draw] {1}
            edge from parent
    node[left, yshift=5pt] {\scriptsize{$x \in W_3$}}
    edge from parent
    node[right] {\scriptsize{$1$}}}
      child {node [rectangle,draw] {0}
            edge from parent
    node[right, yshift=5pt] {\scriptsize{$x \in G\setminus W_3$}}
    edge from parent
    node[left] {\scriptsize{$0$}}}
          edge from parent
    node[left, yshift=5pt] {\scriptsize{$x \in W_1$}}
    edge from parent
    node[right] {\scriptsize{$1$}}
    }
  child {node [rectangle,draw] {$1$}
      edge from parent
    node[right, yshift=5pt] {\scriptsize{$x \in G\setminus W_1$}}
    edge from parent
    node[left] {\scriptsize{$0$}}
    }
    edge from parent
    node[left, yshift=5pt] {\scriptsize{$x \in W_0$}}
    edge from parent
    node[right,yshift=-5pt] {\scriptsize{$1$}}
  }
  child {node [circle,draw]  {$W_2$}
    child {node [circle,draw]  {$W_4$}
      child {node [rectangle,draw] {$0$}
                edge from parent
    node[left, yshift=5pt] {\scriptsize{$x \in W_4$}}
    edge from parent
    node[right] {\scriptsize{$1$}}}
      child {node [rectangle,draw] {$0$}
                edge from parent
    node[right, yshift=10pt] {\scriptsize{$x \in G\setminus W_4$}}
    edge from parent
    node[left] {\scriptsize{$0$}}}
    edge from parent
    node[left, yshift=5pt] {\scriptsize{$x \in W_2$}}
    edge from parent
    node[right] {\scriptsize{$1$}}
    }
      child {node [circle,draw] {$W_5$}
      child {node [rectangle,draw] {1}
                edge from parent
    node[left] {\scriptsize{$x \in W_5$}}
    edge from parent
    node[right] {\scriptsize{$1$}}}
      child {node [rectangle,draw] {0}
                edge from parent
    node[right,yshift=5pt] {\scriptsize{$x \in G\setminus W_5$}}
    edge from parent
    node[left] {\scriptsize{$0$}}}
        edge from parent
    node[right, yshift=5pt] {\scriptsize{$x \in G\setminus W_2$}}
    edge from parent
    node[left] {\scriptsize{$0$}}
  }
      edge from parent
    node[right, yshift=5pt] {\scriptsize{$x \in G\setminus W_0$}}
    edge from parent
    node[left,yshift=-5pt] {\scriptsize{$0$}}
};
\end{tikzpicture}
\caption{Example of a $\mathcal{W}$-decision tree.} \label{fig.example}
\end{figure}
The idea is, of course, that if the functions in $\mathcal{W}$ are easy to compute and $f$ can be computed by a small $\mathcal{W}$-decision tree, then $f$ is easy to compute.

Suppose that $T$ is a $\mathcal{W}$-decision tree computing $f:G \rightarrow \{0,1\}$.  If $P=v_0\cdots v_r$ is a maximal path in $T$ then we write $z_P$ for the value of the label on the leaf in $P$; and for $0 \leq i <r$ we write $g_i:=1_{W_{v_i}}$ if the edge $v_iv_{i+1}$ is labelled with a $1$ and $g_i:=1-1_{W_{v_i}}$ if it is labelled by a $0$; we define $g_P$ to be the product $g_0\cdots g_{r-1}$. Then (\emph{c.f.} \cite[Fact 3.15]{odo::1})
\begin{equation}\label{eqn.id}
f(x)=\sum_{P\text{ is a maximal path in }T}{z_Pg_P(x)} \text{ for all }x \in G.
\end{equation}

A \textbf{parity decision tree}\footnote{See \cite[Exercise 3.26]{odo::1}.} corresponds to the case
\begin{equation*}
X=(\Z/2\Z)^n \text{ and }\mathcal{W}=\{H \leq X: |X:H|=2\},
\end{equation*}
and a \textbf{decision tree}\footnote{See \cite[Definition 3.13]{odo::1}.} corresponds to the case
\begin{equation*}
X=(\Z/2\Z)^n\text{ and }\mathcal{W}=\{\{x:x_i=0\}: 1 \leq i \leq n\}.
\end{equation*}
There are also notions of $k$-ary decision trees which are natural when $X=(\Z/k\Z)^n$, and \cite[Definition 5.1]{bon::0} defines a $G$-decision tree in which each internal vertex has a normal subgroup $H \lhd G$ associated with it and an out-edge for each coset of $H$.

In \cite[Lemma 5.1]{kusman::} the authors show that if a Boolean function can be computed by a parity decision tree with $m$ leaves then it has algebra norm at most $m$.  Given this it is natural to ask whether every Boolean function on $(\Z/2\Z)^n$ with small spectral norm can be computed by a parity decision tree with a small number of leaves.  Unfortunately this is not quite true: one can check that if $f$ is not identically $0$ and a Boolean function computed by a parity decision tree with $m$ leaves then\footnote{See \cite[Exercise 3.30]{odo::1} for the decision tree version of this.} $\E{f} \geq 2^{-m}$.  But if $f$ is the indicator function of a singleton then $\|f\|_{A(G)}=1$, so it follows that $f$ is a Boolean function with algebra norm $1$ that cannot be computed by a parity decision tree with $o(n)$ leaves.

The singleton example in the previous paragraph extends to a general class of examples for finite groups: if $G$ is a finite group then we write $\mathcal{W}(G)$ for the set of cosets of subgroups of $G$.  A short calculation shows that $\|1_W\|_{A(G)}=1$.

We call a $\mathcal{W}(G)$-decision tree a \textbf{coset decision tree}, and then in view of (\ref{eqn.id}) and the fact that the algebra norm really is an algebra norm we see that if $f:G \rightarrow \{0,1\}$ can be computed by a coset decision tree with $m$ leaves then
\begin{equation*}
\|f\|_{A(G)} \leq m2^m.
\end{equation*}
In this paper we shall show the following converse.
\begin{theorem}\label{thm.mn}
Suppose that $G$ is a finite group and $f:G \rightarrow \{0,1\}$ has $\|f\|_{A(G)}\leq M$.  Then there is a coset decision tree with $\exp(\exp(\exp(O(M^{2}))))$ leaves computing $f$.
\end{theorem}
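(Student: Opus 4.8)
The plan is to run a "peeling" or greedy argument on the Fourier/algebra side, mirroring the abelian spectral-norm structure theory (as in the Green–Sanders work on the idempotent theorem and its quantitative refinements). The starting point should be a structural description of integer-valued functions of bounded algebra norm: an $f$ with $\|f\|_{A(G)} \le M$ that is $\{0,1\}$-valued must be expressible as a $\pm 1$ combination of a bounded number of indicator functions of cosets, i.e. $f = \sum_{j=1}^{L} \epsilon_j 1_{x_jH_j}$ with $L$ bounded in terms of $M$ (this is the non-abelian analogue of the quantitative idempotent theorem; in the abelian case one gets $L = \exp(\exp(O(M^4)))$ or similar, and presumably the paper proves such a statement as a key lemma — if not, I would prove it first, by induction on $M$, peeling off a Bohr-set-like or coset-like piece on which $f$ is constant and where the algebra norm drops). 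The triply-iterated exponential in the statement strongly suggests that this idempotent-type decomposition is invoked and then a further exponential loss is incurred in converting the decomposition into a tree.

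Granting the decomposition $f = \sum_{j=1}^{L} \epsilon_j 1_{x_jH_j}$, the next step is to turn the (flat, unstructured) list of cosets $x_1H_1, \dots, x_LH_L$ into a decision tree. The idea: consider the Boolean algebra of subsets of $G$ generated by the $L$ cosets; each coset $x_jH_j$ is a union of atoms, and $f$ is constant on each atom. One builds the tree by successively querying membership in $x_1H_1$, then $x_2H_2$, and so on — but naively this gives a tree of depth $L$ and hence $2^L$ leaves regardless of group structure, which is fine for the stated bound since $2^L = \exp(\exp(\exp(O(M^2))))$ if $L = \exp(\exp(O(M^2)))$. So the crude strategy is: query the $L$ cosets in order; after all queries the computation path has determined, for each $j$, whether $x \in x_jH_j$, hence determined $f(x)$ exactly via the formula; label the leaf accordingly. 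Each internal vertex is labelled by a coset $x_jH_j \in \mathcal{W}(G)$, so this is a legitimate coset decision tree with at most $2^L$ leaves. Identity (\ref{eqn.id}) is then automatically consistent, or one simply checks directly that the tree computes $f$.

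The main obstacle is the first step: proving the bounded-cardinality idempotent decomposition in the non-abelian setting, i.e. that $\{0,1\}$-valued (more generally $\Z$-valued) functions of algebra norm at most $M$ lie in the $\Z$-span of boundedly many coset indicators, with explicit bounds. In the abelian case this rests on Bohr-set technology, Freiman-type or Croot–Sisask almost-periodicity arguments, and an induction that controls how the algebra norm behaves when one restricts to a coset or subtracts off a near-idempotent piece; each of these ingredients needs a non-abelian replacement (using two-sided cosets, the operator/trace-norm definition of $\|\cdot\|_{A(G)}$ given in the excerpt, and the fact that this genuinely is a Banach algebra norm so that products and differences are controlled). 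I expect the bulk of the paper — and the source of the $\exp\exp\exp$ — to be this arithmetic regularity / idempotent step, with the tree-construction step above being essentially bookkeeping once the decomposition is in hand. A secondary point to verify is that the greedy reordering of coset queries can be done so that the tree's leaves correspond to genuinely reachable atoms (many of the $2^L$ leaves may be unreachable and can be pruned), but since we only need an upper bound on the number of leaves this can be ignored.
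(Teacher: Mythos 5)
Your overall plan matches the paper's: first prove a quantitative non-abelian idempotent-type decomposition (this is indeed the bulk of the paper, Theorem \ref{thm.ky}, giving Theorem \ref{thm.mn2}), then convert the decomposition into a coset decision tree. However, the conversion step as you describe it has a genuine quantitative gap. The decomposition that the paper's methods actually deliver is \emph{not} a flat list of doubly-exponentially many cosets: it is $L=O(M)$ subgroups $H_1,\dots,H_L$, with the $i$-th piece supported on at most $R=\exp(\exp(\exp(O(M^2))))$ cosets of $H_i$ with bounded integer coefficients. So the total number of cosets appearing is triply exponential in $M^2$, and your "query every coset in order" tree then has $2^{R\cdot O(M)}$ leaves, which is \emph{quadruply} exponential --- missing the stated bound by an exponential. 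Your bound only works under the assumption that the total coset count is $\exp(\exp(O(M^2)))$, which is not what the key lemma provides (and is not available in this generality; even the abelian Green--Sanders bound is of the shape $\exp(\exp(O(M^4)))$, and the paper notes the general upper bounds here are triply exponential, with only an $\exp(\Omega(M))$ lower bound known).

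The missing idea is to exploit the grouped structure of the decomposition rather than treating the cosets as unstructured. For a fixed subgroup $H_i$, the cosets $g_1^{(i)}H_i,\dots,g_{R_i}^{(i)}H_i$ in the support of $z^{(i)}$ are pairwise disjoint, so membership can be resolved by a decision \emph{list}: query them in sequence, and as soon as $x\in g_j^{(i)}H_i$ you know $x$ lies in no other coset of $H_i$, so no further branching within that block is needed. This gives $R_i+1$ exit points instead of $2^{R_i}$ leaves. Concatenating the $L=O(M)$ lists (appending the next block at every exit point, and carrying the accumulated integer value of $\sum_i z^{(i)}$ along each computation path) yields a tree with at most $(R_1+1)\cdots(R_L+1)\le\exp(\exp(\exp(O(M^2))))$ leaves, since raising a triply-exponential quantity to the power $O(M)$ keeps it triply exponential. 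This is exactly how the paper's proof of Theorem \ref{thm.mn} proceeds; with that replacement your argument goes through, but without it the stated bound is not reached.
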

We remark that if $G$ is abelian then better results are available.  Indeed, if $G=(\Z/p\Z)^n$, Shpilka, Tal and lee Volk \cite[Theorem 1.2]{shptalvol::1} identify the stronger structure of a parity decision tree with far better bounds.  (Of course their bounds necessarily depend on the size of $G$, but they do so in an very mild way, and their theorem comes along with a host of other results.)

We now turn to our second, harmonic analytic, motivation.  In \cite[Definition 3.5]{eym::} Eymard extended the classical definition\footnote{See, for example,  \cite[\S1.2.3]{rud::1}.} of $A(G)$ for locally compact abelian groups to locally compact groups.  This is done first by extending the Fourier-Stieltjes algebra $B(G)$ \cite[Definition 2.2]{eym::}.  

Given a locally compact group $G$ and function $f:G \rightarrow \C$ define
\begin{equation}\label{eqn.norm}
\|f\|_{B(G)}:=\inf\{\|x\|_H\|y\|_H:f(t)=\langle \pi(t)x,y\rangle_H \text{ for all }t \in G\},
\end{equation}
where the infimum is over all Hilbert spaces $H$; elements $x,y \in H$; and strongly continuous representations $\pi:G \rightarrow \Aut(H)$ -- that is strongly continuous unitary representations of $G$ on $H$.  The set $B(G)$ is then the set of functions for which this quantity is finite.

\cite[Proposition 2.16]{eym::} shows that $B(G)$ equipped with $\|\cdot\|_{B(G)}$ is a complex Banach algebra and \cite[Lemme 2.14]{eym::} that the infimum in (\ref{eqn.norm}) is attained.  The space $A(G)$ can be defined as the closure of $B(G)\cap C_c(G)$ in $B(G)$ where $C_c(G)$ is the set of continuous compactly supported functions, and we write $\|f\|_{A(G)}:=\|f\|_{B(G)}$ for $f \in A(G)$.  If $G$ is finite then this definition agrees with the one in the previous section.

We write $\mathcal{W}(G)$ for the set of cosets of open subgroups of $G$ and a short calculation confirms that if $W \in \mathcal{W}(G)$ then $\|1_W\|_{B(G)} =1$.  Moreover, if $z \in \ell_1(\mathcal{W}(G))$ is integer-valued then
\begin{equation*}
f:=\sum_{W \in \mathcal{W}(G)}{z_W1_W} \text{ is integer-valued and has }\|f\|_{B(G)} \leq \|z\|_{\ell_1(\mathcal{W}(G))}.
\end{equation*}
In \cite{lef::}, Lefranc announced the following converse.
\begin{theorem}\label{thm.lf}
Suppose that $G$ is a locally compact group and $f \in B(G)$ is integer-valued.  Then there is some integer-valued $z \in \ell_1(\mathcal{W}(G))$ such that
\begin{equation*}
f=\sum_{W \in \mathcal{W}(G)}{z_W1_W}.
\end{equation*}
\end{theorem}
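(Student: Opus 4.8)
The plan is to reduce Theorem~\ref{thm.lf} to the finite quantitative statement Theorem~\ref{thm.mn} by a transference argument; the remaining ingredients are soft reductions coming from the structure theory of locally compact groups. First reduce to an idempotent: for $c\ne 0$ the indicator $1_{\{f=c\}}$ is the image of $f$ under the polynomial that Lagrange-interpolates $1_{\{c\}}$ at the (finitely many) values of $f$, so it lies in the Banach algebra $B(G)$, and $f=\sum_{c\ne0}c\cdot 1_{\{f=c\}}$ is a finite sum; hence we may assume $f=1_E$ is an idempotent with $\|1_E\|_{B(G)}\le M$, and $E$ is clopen since $1_E$ is continuous. Then $E$ is a union of cosets of the identity component $G^{\circ}$, so $1_E$ factors through $G/G^{\circ}$; since $B(G/N)$ embeds isometrically in $B(G)$ as the $N$-invariant functions we may assume $G$ is totally disconnected. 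Finally $\|1_E\|_{B(G_{d})}\le\|1_E\|_{B(G)}$, where $G_{d}$ is $G$ with the discrete topology, because a continuous unitary representation of $G$ is a unitary representation of $G_{d}$. So it suffices to show: for a discrete group $\Gamma$, if $\|1_E\|_{B(\Gamma)}\le M$ then $E$ lies in the ring of sets generated by cosets of subgroups of $\Gamma$ --- equivalently, by inclusion--exclusion, $1_E$ is a finite $\Z$-combination of indicators of such cosets. Running the reductions backwards reinstates the topology: the clopen sets so produced lie in the coset ring of $G_{d}$, and by a lemma on coset rings of totally disconnected groups --- using van Dantzig's theorem that such a group has a neighbourhood basis at the identity of compact open subgroups, together with the local constancy of $f$ --- each is a finite $\Z$-combination of indicators of cosets of \emph{open} subgroups of $G$; summing over $c$ gives the required $z\in\ell_1(\mathcal W(G))$, in fact of finite support.

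It remains to treat a discrete group $\Gamma$ with $\|1_E\|_{B(\Gamma)}\le M$, and this is where Theorem~\ref{thm.mn} enters, through a finite-model argument. Fix a representation $1_E=\langle\pi(\cdot)\xi,\eta\rangle$ attaining the norm (the infimum in (\ref{eqn.norm}) is attained). For each finite symmetric $F\subseteq\Gamma$ containing the identity, truncate $\pi$ to a finite-dimensional subspace on which the $\pi(x)$ $(x\in F)$ act almost unitarily and almost homomorphically, and correct the errors --- using that a near-idempotent in a Banach algebra is close to an idempotent of controlled norm --- to obtain an honest finite group $K_{F}$ with a marked copy of $F$ and a Boolean function $h_{F}\colon K_{F}\to\{0,1\}$ with $\|h_{F}\|_{A(K_{F})}\le 2M$ agreeing with $1_E$ on the copy of $F$. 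Applying Theorem~\ref{thm.mn} to $h_{F}$, and expanding the products $g_{P}$ in (\ref{eqn.id}) over the at most $\exp(\exp(\exp(O(M^{2}))))$ leaves --- using that an intersection of cosets is a coset or empty --- we write $h_{F}$ as a $\Z$-combination of indicators of at most $N(M)$ cosets with coefficients of total modulus at most $N(M)$, where $N(M)$ is independent of $F$. Pulling this back to $F$ and passing to a limit along the net of finite subsets of $\Gamma$ directed by inclusion, the uniform bound $N(M)$ allows one to extract a single finite $\Z$-combination of indicators of cosets of subgroups of $\Gamma$ equal to $1_E$.

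The main obstacle is precisely the finite-model step --- a non-abelian, $B(\Gamma)$-flavoured analogue of the ``finite model'' constructions of additive combinatorics --- together with the coherence of the limit: the truncation-and-correction of $\pi$ must control both $\|h_{F}\|_{A(K_{F})}$ and the faithfulness of $F\hookrightarrow K_{F}$, and the number of cosets and the coefficient bound coming from Theorem~\ref{thm.mn} must be uniform in $F$ for the limit to make sense, which is exactly why a \emph{quantitative} finite idempotent theorem is the right engine here rather than a merely qualitative one. By contrast the structural reductions and the coset-ring lemma needed to reinstate the topology should be routine. As alternatives to routing through Theorem~\ref{thm.mn} one could instead run the increment argument behind it directly over $\Gamma$, or reduce the non-abelian case to the abelian one in the manner of Host and invoke Cohen's idempotent theorem.
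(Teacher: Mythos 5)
First, a point of orientation: the paper does not prove Theorem \ref{thm.lf} at all. It is quoted as Lefranc's announcement, with the published proof credited to Host and the abelian case to Cohen; the paper's own contributions (Theorems \ref{thm.mn}, \ref{thm.mn2}, \ref{thm.ky}) are quantitative statements for \emph{finite} groups. So your proposal is an attempted new proof, routing the general locally compact case through Theorem \ref{thm.mn}, and it has a genuine gap at its central step. Your outer reductions are fine and standard (boundedness of $f$ plus the unital Banach algebra structure of $B(G)$ gives the reduction to an idempotent $1_E$; $E$ clopen is a union of cosets of $G^{\circ}$; $B(G/N)$ sits isometrically inside $B(G)$; and $\|1_E\|_{B(G_d)}\leq \|1_E\|_{B(G)}$), and the lemma reinstating the topology (a clopen member of the coset ring of $G_d$ lies in the ring generated by open cosets, via van Dantzig) is known, though not trivial. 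The problem is the finite-model step for a general discrete group $\Gamma$.

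You claim that for every finite symmetric $F\subseteq\Gamma$ one can build a finite group $K_F$ containing a marked copy of $F$ together with a Boolean $h_F$ with $\|h_F\|_{A(K_F)}\leq 2M$ agreeing with $1_E$ on $F$. Even the purely group-theoretic part of this claim (forgetting all norm control) asserts that $\Gamma$ is locally embeddable into finite groups, and this is false in general: a finitely presented infinite simple group admits no multiplication-preserving partial embedding of a sufficiently large finite subset into any finite group. The analytic part is equally unsupported: a norm-attaining representation $\pi$ (think of a multiple of the left regular representation of a free group) need not possess any finite-dimensional subspace on which the operators $\pi(x)$, $x\in F$, act almost unitarily and almost multiplicatively, and no mechanism is offered for ``correcting the errors'' of such an approximate action into an honest finite group -- this is the territory of sofic/hyperlinear approximation, where existence is open in general, not a routine truncation. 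Finally, even granting $K_F$, cosets of subgroups of $K_F$ do not pull back along the partial embedding to cosets of subgroups of $\Gamma$, so the limiting argument needs the subgroup data to cohere across $F$, which the uniform cardinality bound $N(M)$ alone does not provide. For these reasons Theorem \ref{thm.mn} cannot serve as the engine in the way you propose; Host's published argument instead works directly with the norm-attaining representation on the ambient (possibly infinite) group, essentially by descending induction on $\|f\|_{B(G)}$, with no finite models.
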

It seems Lefranc's proof of Theorem \ref{thm.lf} never appeared in print, but happily a beautiful argument of Host did in \cite{hos::}, and Theorem \ref{thm.lf} is sometimes (see \emph{e.g.} \cite{run::}) called the Cohen-Host theorem since Cohen \cite{coh::} proved it in the case when $G$ is abelian.

It has been used for a number of endeavours in harmonic analysis, for example characterising the locally compact groups $G$ for which $A(G)$ is amenable \cite{forrun::}; and characterising the ideals of $A(G)$ with bounded approximate identities \cite{forkanlauspr::}.  Both of these and more are discussed in \cite{run::}.

Host's argument actually shows the following stronger result.
\begin{theorem}\label{thm.host}
Suppose that $G$ is a locally compact group and $f \in B(G)$ is integer-valued with $\|f\|_{B(G)}\leq M$.  Then there is an integer $L \leq M$, open subgroups $K_1,\dots,K_L$ of $G$, and integer-valued functions $z^{(i)} \in \ell_1(G/K_i)$ such that
\begin{equation*}
f=\sum_{i=1}^L{\sum_{W \in G/K_i}{z_W^{(i)}1_W}}.
\end{equation*}
\end{theorem}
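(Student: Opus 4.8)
The strategy is to follow Host's argument \cite{hos::} and prove Theorem~\ref{thm.host} by repeatedly peeling off a single coset layer, each time reducing the algebra norm by at least $1$, so that the procedure halts after at most $M$ steps. The crux is the following one-step statement: \emph{if $f\in B(G)$ is integer-valued and not identically zero, then there is an open subgroup $K\leq G$ and an integer-valued function $h$ that is constant on the left cosets of $K$ and supported on only finitely many of them} (so $h\in\ell_1(G/K)$) \emph{with $\|f-h\|_{B(G)}\leq\|f\|_{B(G)}-1$.} Granting this, put $f_0:=f$ and, as long as $f_j\neq0$, apply the statement to obtain $K_{j+1}$ and $h_j\in\ell_1(G/K_{j+1})$ with $f_{j+1}:=f_j-h_j$ (again integer-valued) and $\|f_{j+1}\|_{B(G)}\leq\|f_j\|_{B(G)}-1$. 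Since $\|g\|_{B(G)}\geq\|g\|_\infty\geq1$ for every non-zero integer-valued $g$ (immediate from (\ref{eqn.norm})) and $\|f_j\|_{B(G)}\leq M-j$, the process reaches $f_L=0$ for some $L$, and applying the lower bound to $f_{L-1}$ forces $L\leq M$. Then $f=h_0+\dots+h_{L-1}$ with each $h_j$ integer-valued in $\ell_1(G/K_{j+1})$, as required; in particular this recovers Theorem~\ref{thm.lf}, by merging the $L$ layers into a single element of $\ell_1(\mathcal{W}(G))$.

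For the one-step statement I would use the extremal representation: by \cite[Lemme 2.14]{eym::} the infimum in (\ref{eqn.norm}) is attained, so write $f(t)=\langle\pi(t)\xi,\eta\rangle_H$ with $\|\xi\|\,\|\eta\|=\|f\|_{B(G)}$, and rescale so that $\|\xi\|^2=\|\eta\|^2=\|f\|_{B(G)}=:m$. Since the $B(G)$-norm is translation-invariant, I may also assume $|f(e)|=\|f\|_\infty$, so $f(e)=\langle\xi,\eta\rangle$ is a non-zero integer of maximal modulus. Taking a small open subgroup $K$ on which $f$ is constant (so $f|_K\equiv f(e)$) and letting $P_K:=|K|^{-1}\int_K\pi(k)\,dk$ be the orthogonal projection onto the $\pi(K)$-fixed vectors, one has $\langle P_K\xi,P_K\eta\rangle=\langle P_K\xi,\eta\rangle=|K|^{-1}\int_Kf(k)\,dk=f(e)$, hence $\|P_K\xi\|\,\|P_K\eta\|\geq|f(e)|\geq1$. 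Writing $\xi=P_K\xi+\xi'$ and $\eta=P_K\eta+\eta'$ orthogonally, Pythagoras gives $\|\xi'\|^2=m-\|P_K\xi\|^2$ and $\|\eta'\|^2=m-\|P_K\eta\|^2$, so by the arithmetic--geometric mean inequality
\begin{equation*}
\|\xi'\|\,\|\eta'\|\leq\frac{\|\xi'\|^2+\|\eta'\|^2}{2}=m-\frac{\|P_K\xi\|^2+\|P_K\eta\|^2}{2}\leq m-\|P_K\xi\|\,\|P_K\eta\|\leq m-1.
\end{equation*}
Thus the matrix coefficient $\langle\pi(\cdot)\xi',\eta'\rangle$ has algebra norm at most $\|f\|_{B(G)}-1$, and $f$ minus this coefficient is the candidate for the layer $h$.

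The genuine difficulty --- and the step I expect to be the main obstacle --- is that this candidate need not be of the required form. Expanding $\langle\pi(\cdot)\xi',\eta'\rangle$ shows that $f-\langle\pi(\cdot)\xi',\eta'\rangle$ equals the left-$K$-coset average of $f$ plus the right-$K$-coset average minus the double-coset average; this is in general not constant on the left cosets of a single subgroup, and averages of an integer-valued function are typically not integer-valued. Resolving this --- choosing $K$ (and, if needed, a bounded collection of subgroups) so that what is peeled off genuinely consists of integer-valued members of $\ell_1(G/K_i)$-spaces while the norm still drops by a full unit --- is exactly where Host's argument does its real work, exploiting the local constancy of $f$ together with the maximality of $|f(e)|$. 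The remaining ingredients are soft and would be dealt with first: $f$ is continuous and integer-valued, hence locally constant, so one reduces to the case of a totally disconnected $G$, where van Dantzig's theorem supplies a neighbourhood basis of compact open subgroups to serve as candidates for $K$.
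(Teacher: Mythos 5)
Note first that the paper does not actually prove Theorem \ref{thm.host}: it is quoted as a known result, with the proof attributed to Host \cite{hos::} (the paper's own contribution, Theorem \ref{thm.ky}, is a quantitative finite-group analogue proved by a quite different, additive-combinatorial route). So the only question is whether your proposal stands on its own as a proof, and it does not: you have a genuine gap, and you acknowledge it yourself. The outer skeleton is fine --- the reduction to a locally constant $f$, the choice of a point where $|f|$ attains its maximum, the extremal representation from \cite[Lemme 2.14]{eym::}, the projection $P_K$ onto $\pi(K)$-fixed vectors, and the AM--GM computation giving $\|\xi'\|\,\|\eta'\|\leq\|f\|_{B(G)}-1$, as is the iteration and the counting argument giving $L\leq M$ once the one-step lemma is granted. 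But the one-step lemma is the entire content of the theorem, and your argument does not deliver it. What your computation shows is only that the matrix coefficient $\langle\pi(\cdot)\xi',\eta'\rangle$ has $B(G)$-norm at most $\|f\|_{B(G)}-1$; the complementary piece $f-\langle\pi(\cdot)\xi',\eta'\rangle$, which is what you would need to be an integer-valued element of $\ell_1(G/K)$, is (as you observe) a combination of one-sided and two-sided $K$-coset averages of $f$. Such averages are in general neither integer-valued, nor constant on the left cosets of a single open subgroup, nor supported on finitely many cosets --- indeed even the $\ell_1(G/K)$ membership (finite coset support) is a nontrivial claim you never address. Deferring this to ``where Host's argument does its real work'' means the decisive step is missing, so no new integer-valued remainder $f_{j+1}$ is ever produced and the induction cannot start.

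Two smaller points you would also need to repair if you pushed this through: the formula $P_K=|K|^{-1}\int_K\pi(k)\,dk$ presupposes $K$ compact, which your reduction only guarantees when the connected component $G_0$ is compact (one can instead characterise $P_K\xi$ as the minimal-norm element of the closed convex hull of the $\pi(K)$-orbit of $\xi$, which is $K$-fixed and still pairs with $\eta$ to give $f(e)$, so this is fixable); and the identity $\langle P_K\xi,\eta\rangle=f(e)$ only uses $f|_K\equiv f(e)$, which is fine, but the subsequent expansion of $f-\langle\pi(\cdot)\xi',\eta'\rangle$ into coset averages is exactly where the maximality of $|f(e)|$ and the integrality of $f$ have to be exploited in a serious way --- that exploitation is absent here.
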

The bound on $L$ makes this result partially quantitative, but it still tells us nothing if $G$ is a finite group, and we prove the following.
\begin{theorem}\label{thm.mn2}
Suppose that $G$ is a finite group and $f:G\rightarrow \Z$ has $\|f\|_{A(G)} \leq M$.  Then there is some $L=O(M)$, subgroups $K_1,\dots,K_L \leq G$, and integer-valued functions $z^{(i)} \in \ell_1(G/K_i)$ (for $1\leq i \leq L$) such that
\begin{equation*}
f=\sum_{i=1}^L{\sum_{W \in G/K_i}{z_W^{(i)}1_W}} \text{ and } \|z^{(i)}\|_{\ell_1(G/K_i)} \leq \exp(\exp(\exp(O(M^2)))).
\end{equation*}
\end{theorem}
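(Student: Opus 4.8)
The plan is to combine the coset-decision-tree construction behind Theorem~\ref{thm.mn} with Host's subgroup bookkeeping from the proof of Theorem~\ref{thm.host}. First a trivial reduction. Translation acts on the convolution operator attached to $f$ by composition with a unitary, so $\|\cdot\|_{A(G)}$ is translation invariant; and the operator $g\mapsto f\ast g$ has trace $f(e)$, whence $|f(e)|\leq\|f\|_{A(G)}$. Applying this to the translates of $f$ gives $\|f\|_\infty\leq\|f\|_{A(G)}\leq M$, so $f$ takes values in $\{-\lceil M\rceil,\dots,\lceil M\rceil\}$; in particular it is integer valued with bounded range. One checks that the construction proving Theorem~\ref{thm.mn} applies to such an $f$ with no essential change beyond allowing the leaves of the resulting coset decision tree $T$ to carry integer labels in $[-M,M]$ rather than labels in $\{0,1\}$; thus $T$ computes $f$ and has $\exp(\exp(\exp(O(M^2))))$ leaves.

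The first genuine step is a non-abelian quantitative idempotent theorem. By the identity~(\ref{eqn.id}) applied to $T$ we have $f=\sum_P z_P g_P$, the sum over maximal paths $P$ of $T$, where $|z_P|\leq M$ and each $g_P$ is a product of indicators of cosets and of complements of cosets. Expanding every factor $1-1_W$ and using that an intersection of cosets of subgroups is again a coset of a subgroup, or empty, rewrites each $g_P$ as a $\{0,\pm1\}$-combination of indicators of cosets, and hence $f=\sum_W c_W1_W$ with the $W$ ranging over cosets of subgroups of $G$. For the bound $\sum_W|c_W|\leq\exp(\exp(\exp(O(M^2))))$ one needs the \emph{depth} of $T$, not merely its number of leaves, to be at most $\exp(\exp(O(M^2)))$; the construction behind Theorem~\ref{thm.mn} should be arranged to provide this, or else one must account directly for the cancellation among the sets $g_P$. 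At this point $f$ is written in the form required by Theorem~\ref{thm.mn2} except that the number of subgroups appearing is uncontrolled.

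The complementary input is Host's Theorem~\ref{thm.host}, which applies to finite $G$ and produces an integer $L\leq M$, subgroups $K_1,\dots,K_L\leq G$, and integer-valued $z^{(i)}\in\ell_1(G/K_i)$ with $f=\sum_{i=1}^{L}\sum_{W\in G/K_i}z_W^{(i)}1_W$. This fixes the subgroups and bounds their number by $M$, but gives no control on the $z^{(i)}$. So the two halves of Theorem~\ref{thm.mn2} -- that one may take $O(M)$ subgroups, and that the coefficient functions may be taken with $\ell_1$-norm $\exp(\exp(\exp(O(M^2))))$ -- are each individually available, and the work is to obtain them simultaneously.

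I expect this simultaneity to be the main obstacle. The natural route is to run Host's induction quantitatively: maintain a partial decomposition $f=g+h$ where $g$ is a sum of indicators of cosets of the subgroups chosen so far, and at each round peel off a single subgroup $K_j$ together with a function $g_j=\sum_{W\in G/K_j}z_W^{(j)}1_W$, replacing $h$ by $h-g_j$ in such a way that $\|h-g_j\|_{A(G)}$ is strictly smaller than $\|h\|_{A(G)}$ by a definite amount, so that the process halts after $O(M)$ rounds. The difficulty is that the $K_j$ extracted at round $j$ must do two incompatible-looking things at once: force the residual algebra norm to drop, and be such that $g_j$ is supported on few cosets of $K_j$, for then $\|z^{(j)}\|_{\ell_1(G/K_j)}\leq M\cdot|\operatorname{supp} z^{(j)}|$ is bounded as claimed. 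Bounding $|\operatorname{supp} z^{(j)}|$ is exactly where one feeds in the quantitative idempotent estimate of the second step, applied to the structure of $h$ at the current scale; making the two requirements on $K_j$ compatible -- in particular ruling out the \emph{a priori} danger that the bookkeeping forces $2^{O(M)}$ rather than $O(M)$ subgroups -- is the crux of the argument.
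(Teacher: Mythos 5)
There is a genuine gap, and it is structural. Your argument takes Theorem~\ref{thm.mn} as an input, but in this paper Theorem~\ref{thm.mn} is not independent of Theorem~\ref{thm.mn2}: the only proof of Theorem~\ref{thm.mn} available is deduced from Theorem~\ref{thm.ky}, which is exactly the (almost-integer-valued, hence stronger) form of Theorem~\ref{thm.mn2}. The logical flow is the reverse of the one you assume -- the decomposition into few cosets with bounded coefficients is proved first, and the coset decision tree is then built from it by an essentially notational manipulation (ordering the cosets of each $H_i$ into a decision list and stacking the lists). So invoking Theorem~\ref{thm.mn} here is circular unless you supply a separate proof of it, which you do not. (A secondary problem with that half of your plan: the tree one gets has depth comparable to its number of leaves, i.e.\ triply exponential, so expanding the path-products $g_P$ as you propose gives coefficient sums of size $2^{\mathrm{depth}}$, a quadruple exponential; your own caveat that one needs depth $\exp(\exp(O(M^2)))$ is not met by any construction on offer.)

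More importantly, even granting both inputs you correctly identify, you then stop at what you call the crux -- obtaining $L=O(M)$ \emph{and} the $\ell_1$ bounds simultaneously -- and offer only the hope that Host's induction can be run quantitatively, with no mechanism for choosing the subgroup at each round. That mechanism is the actual content of the paper. The proof of Theorem~\ref{thm.ky} iterates Lemma~\ref{lem.itlem}: given an almost integer-valued $f$ with $\|f\|_{A(G)}\leq M$, one finds a subgroup $H$ with $|H|\geq \exp(-\exp(\exp(O(M^2))))|\supp f_\Z|$ such that $f\ast m_H$ is still almost integer-valued and $(f\ast m_H)_\Z\not\equiv 0$. The lower bound on $|H|$ together with $|\supp(f\ast m_H)_\Z|\leq 2M|\supp f_\Z|$ and $\|f\ast m_H\|_\infty\leq M+1$ gives the $\ell_1$ bound on that round's coefficient function, while Lemma~\ref{lem.split} gives the exact splitting $\|f\|_{A(G)}=\|f-f\ast m_H\|_{A(G)}+\|f\ast m_H\|_{A(G)}$ and $\|f\ast m_H\|_{A(G)}\geq \tfrac12$ (it is close to a nonzero integer-valued function), so the norm drops by $\tfrac12$ per round and the process stops after $O(M)$ rounds -- this is precisely how the two requirements on $K_j$ that you flag as ``incompatible-looking'' are met by a single subgroup. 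Producing $H$ requires the machinery you have not replaced: arithmetic connectivity (Proposition~\ref{prop.infstruct}) to find $S\subset\supp f_\Z$ with small tripling, the Fre{\u\i}man-type Lemma~\ref{lem.fr} built on Croot--Sisask, and the quantitative continuity statement Proposition~\ref{prop.inv} (whose proof uses Cotlar--Stein); note also that the induction must be run over \emph{almost} integer-valued functions, since $f\ast m_H$ is only approximately integer-valued, which is why Theorem~\ref{thm.ky} is formulated as it is. As it stands your proposal reduces the theorem to an unproved claim at least as hard as the theorem itself.
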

This improves on the bounds in \cite[Theorem 1.2]{san::9} (which are triply tower in nature), and we also hope it provides a more easily digested proof.

If $f$ is the indicator function of an arithmetic progression of integers with size $N$ then its algebra norm was computed classically (see \emph{e.g.} \cite[(17.)]{fej::0}) and shown to be $\frac{4}{\pi^2}\log N +O(1)$.  Embedding this progression in $\Z/p\Z$ for a sufficiently large prime $p$ yields a subset of a finite group of size $N$ with algebra norm asymptotic to $\frac{4}{\pi^2}\log N+O(1)$.  Since the only subgroups of $\Z/p\Z$ are the whole group and the trivial group we see that we cannot hope to beat $L =\Omega(\exp(\frac{\pi^2}{4}M))$ in the bounds in Theorem \ref{thm.mn2}.

In the setting above -- small sets in cyclic groups of prime order -- Theorem \ref{thm.mn2} is known with bounds of the form $L=\exp(O(M))$ (see \cite[Theorem 2]{konshk::1}), so in some sense the example above is tight.  It seems plausible that arithmetic progressions are the worst examples more generally.  In the dyadic groups -- groups of the form $\F_2^n$ -- we do not even have arithmetic progressions and one might hope there that a polynomial bound for $L$ holds.  We do not know of examples that make fundamental use of the non-abelian structure of more general groups.

As a final remark it might be of interest to combine the arguments here with Host's to prove a fully quantitative version of Theorem \ref{thm.host}.  Indeed, when $G$ is abelian this was done in \cite[Theorem 1.2]{gresan::0} and that result has since been applied in \cite{woj::0} and \cite{czuwoj::}.

\section{Notation and basic facts}\label{sec.not}

It is convenient to gather together a few definitions and standard lemmas along with some alternative ways of defining the algebra norm.  We assume from now on that $G$ is a finite group.

We write $C(G)$ for the complex-valued functions on $G$ and $\rho$ for the right regular representation so that
\begin{equation*}
\rho_y(f)(x):=f(xy) \text{ for all }x,y \in G.
\end{equation*}
We write $M(G)$ for the complex-valued measures on $G$ and extend $\rho$ to $M(G)$ in the natural way.  Moreover we put
\begin{equation*}
f\ast \mu:=\int{\rho_{y^{-1}}(f)(x)d\mu(y)} \text{ for all }f \in C(G) \text{ and }\mu \in M(G),
\end{equation*}
so that
\begin{equation*}
\rho_z(f \ast \mu) = \int{\rho_{zy^{-1}}(f)d\mu(y)} = \int{\rho_{u^{-1}}(f)d\rho_z(\mu)(u)} = f \ast \rho_z(\mu) \text{ for all }z \in G.
\end{equation*}
We define convolution of measures similarly.

If $\mu \in M(G)$ is non-negative and $g \in L_1(\mu)$ then we write $gd\mu$ for the element of $M(G)$ induced by
\begin{equation*}
C(G) \rightarrow C(G); f \mapsto \int{fgd\mu}.
\end{equation*}

If $S \subset G$ is non-empty then we write $m_S$ for the uniform probability measure on $G$ supported on $S$, and $\delta_S$ for the counting measure supported on $S$; we write $\ell_p(S)$ for $L_p(\delta_S)$.

If $\mu$ is a Haar measure on $G$ then we define
\begin{equation*}
f \ast g:=f \ast (gd\mu) \text{ for all } f \in L_1(\mu) \text{ and }g \in L_1(\mu).
\end{equation*}
The two examples we use are when $f,g \in L_1(m_G)$ and $h,k \in \ell_1(G)$ when we have
\begin{equation*}
f \ast g=\E_y{\rho_{y^{-1}}(f)g(y)} \text{ and } h \ast k = \sum_y{\rho_{y^{-1}}(f)g(y)}.
\end{equation*}
We also put
\begin{equation*}
\wt{f}(x):=\overline{f(x^{-1})} \text{ for all }x \in G, f \in C(G),
\end{equation*}
so that $\wt{1_A} = 1_{A^{-1}}$ for any $A \subset G$, and make a similar definition for $\wt{\mu}$ where $\mu \in M(G)$.

For $f \in C(G)$ and $\mu \in M(G)$ we write
\begin{equation*}
\langle f,\mu\rangle:=\int{f\overline{d\mu}} \text{ and } \langle \mu,f\rangle:=\int{\overline{f}d\mu}.
\end{equation*}
A short calculation verifies that
\begin{equation*}
\langle f,\nu\ast \wt{\mu}\rangle=\langle f\ast \mu,\nu\rangle = \langle \mu, \wt{f}\ast \nu\rangle \text{ for all }f \in C(G), \mu,\nu \in M(G).
\end{equation*}
Given a homomorphism $\pi:G \rightarrow \Aut(H)$ we write
\begin{equation*}
\wh{\mu}(\pi):=\int{\pi(t^{-1})d\mu(t)}.
\end{equation*}
This the analogue of the Fourier transform.
\begin{lemma}\label{lem.split}
Suppose that $H \leq G$ and $f \in A(G)$.  Then
\begin{equation*}
\|f\|_{A(G)} = \|f-f\ast m_H\|_{A(G)} + \|f\ast m_H\|_{A(G)}.
\end{equation*}
\end{lemma}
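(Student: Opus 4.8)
The plan is to work directly with the operator $L_f\colon g\mapsto f\ast g$ on $L_2(G)$ — whose trace norm is, by definition, $\|f\|_{A(G)}$ — and to display $L_f$ as an orthogonal direct sum of two operators with trace norms $\|f\ast m_H\|_{A(G)}$ and $\|f-f\ast m_H\|_{A(G)}$. Since $H$ is a subgroup, $m_H\ast m_H=m_H$ and $\wt{m_H}=m_H$; as $L_{\mu\ast\nu}=L_\mu L_\nu$ and the adjoint of $L_\mu$ on $L_2(G)$ is $L_{\wt{\mu}}$, the operator $P:=L_{m_H}\colon g\mapsto m_H\ast g$ is therefore a self-adjoint idempotent, i.e.\ an orthogonal projection; write $V$ for its range. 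Associativity of convolution gives $L_{f\ast m_H}=L_f\circ P$ and hence $L_{f-f\ast m_H}=L_f\circ(I-P)$.

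The key step is the identity $m_H\ast f=f\ast m_H$, equivalently $PL_f=L_fP$. Both sides are the function sending $x$ to the average of $f$ over a single coset of $H$ — the left coset $Hx$ on one side, the right coset $xH$ on the other — and these coincide for all $x$ exactly when $H$ is normal in $G$ (equivalently, $m_H$ is central for convolution), so this is the one point at which the hypothesis on $H$ is genuinely used. I expect it to be the main thing to nail down; everything after it is formal. Granting the commutation, $L_f$ leaves $V$ and $V^\perp$ invariant, so it splits as $L_f=(L_f|_V)\oplus(L_f|_{V^\perp})$, and under the same splitting $L_{f\ast m_H}=L_fP=(L_f|_V)\oplus 0$ while $L_{f-f\ast m_H}=L_f(I-P)=0\oplus(L_f|_{V^\perp})$.

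To finish I would invoke the additivity of the trace norm over orthogonal direct sums: if $T=A\oplus B$ then $T^{\ast}T=A^{\ast}A\oplus B^{\ast}B$, so $|T|=|A|\oplus|B|$ and the trace norm of $T$ is the sum of those of $A$ and $B$. Applied to $L_f=(L_f|_V)\oplus(L_f|_{V^\perp})$ — and noting that $L_{f\ast m_H}$ and $L_{f-f\ast m_H}$ have the same trace norms as $L_f|_V$ and $L_f|_{V^\perp}$ respectively — this yields $\|f\|_{A(G)}=\|f\ast m_H\|_{A(G)}+\|f-f\ast m_H\|_{A(G)}$. Alternatively one can argue on the Fourier side exactly as the introduction does for abelian $G$: for each irreducible representation $\pi$, $\wh{m_H}(\pi)=\E_{h\in H}\pi(h^{-1})$ is the orthogonal projection onto the $H$-fixed vectors, so $\wh{f\ast m_H}(\pi)$ and $\wh{f-f\ast m_H}(\pi)$ split $\wh{f}(\pi)$ into complementary blocks; when $H$ is normal this fixed subspace is a subrepresentation of the irreducible $\pi$ and hence is $\{0\}$ or all of $V_\pi$, so one block vanishes, the Fourier supports become disjoint, and the $\ell_1$-type norms add — the use of normality being at the very same place.
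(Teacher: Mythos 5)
Your argument establishes the identity only when $H$ is normal in $G$, but the lemma asserts it for an arbitrary subgroup $H\leq G$, and that is how it is used later (in the proof of Theorem \ref{thm.ky} the subgroup comes from Lemma \ref{lem.itlem}, where it is generated by a set $B$ produced by the iteration, with no normality available). The point where you invoke normality is the commutation $m_H\ast f=f\ast m_H$, i.e.\ $PL_f=L_fP$: without it $L_f$ does not preserve $V=\mathrm{ran}(P)$ and $V^\perp$, so your orthogonal direct sum decomposition is not available, and the same issue appears in your Fourier-side variant, where you need the $H$-fixed subspace of each irreducible to be $\{0\}$ or everything. The paper's proof is deliberately commutation-free: writing $M\colon g\mapsto f\ast g-f\ast m_H\ast g=L_f(I-P)$ and $N\colon g\mapsto f\ast m_H\ast g=L_fP$, it computes $N^*g=m_H\ast\wt{f}\ast g$, so that $MN^*=0$ using only that $P$ is a self-adjoint idempotent, deduces that $M^*M$ and $N^*N$ commute with product zero, and reads the conclusion off a simultaneous diagonalisation. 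So as a proof of the statement as written, your proposal has a genuine gap: it proves a strictly weaker result.

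That said, your instinct that normality is doing real work is well founded, and the mismatch between your route and the paper's sits exactly at the delicate point. The relation $MN^*=0$ says the ranges of $M^*$ and $N^*$ (namely $V^\perp$ and $V$) are orthogonal; additivity of the trace norm for $M+N$ also needs the ranges of $M$ and $N$ to be orthogonal, i.e.\ $N^*M=0$, and that is precisely what your commutation supplies and what the paper's final sentence does not record. One-sided orthogonality alone does not suffice: with $Q=\mathrm{diag}(1,0)$ and $T$ the $2\times 2$ matrix with rows $(1,1)$ and $(0,0)$, the pieces $TQ$ and $T(I-Q)$ satisfy the analogous relation yet have trace norms $1$ and $1$, while $T$ has trace norm $\sqrt{2}$. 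Moreover this configuration is realised by group data once $H$ fails to be normal: the algebra norm is the sum over irreducibles $\pi$ of $d_\pi$ times the trace norm of $\E_y{f(y)\pi(y)}$, and convolving with $m_H$ multiplies each of these matrices on one side by $\E_{h\in H}{\pi(h)}$, the projection onto the $H$-fixed vectors, which for non-normal $H$ is proper and non-zero in some irreducible (e.g.\ the two-dimensional representation of $S_3$ with $H$ generated by a transposition); choosing $f$ whose transform there is the matrix $T$ above gives strict inequality $\|f\|_{A(G)}<\|f-f\ast m_H\|_{A(G)}+\|f\ast m_H\|_{A(G)}$. So for a general subgroup only the triangle-inequality direction $\leq$ comes for free; your proof is correct under the additional hypothesis $H\lhd G$, it does not prove the lemma as stated, and the obstruction you flagged is not removable by a formal adjustment — it is genuinely where the matter is subtle, and where the paper's own argument is at its tersest.
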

\begin{proof}
This is a routine calculation: write $M$ for the operator $g \mapsto f \ast g -f \ast m_H \ast g$ and $N$ for the operator $g \mapsto f \ast m_H \ast g$.  Then $N^*g=m_H \ast \wt{f}\ast g$ and so $MN^*g = 0$.  But then $M^*MN^*N = 0=N^*NM^*M$ and so $M^*M$ and $N^*N$ commute.  By the spectral theorem they can be simultaneously diagonalised and any eigenvector for $M^*M$ with non-zero eigenvalue is in the kernel of $N^*N$ and vice versa.  The result follows from the definition of the $A(G)$-norm.
\end{proof}

We now turn to two useful equivalent definitions for the algebra norm.  The first can be compared with the definition of the uniform almost periodicity norms \cite[Definition 11.15]{taovu::}, but see also \cite[Th{\'e}or{\`e}m, p218]{eym::}.
\begin{lemma}\label{lem.various}
Suppose that $f \in A(G)$.  Then there is a constant $M \leq \|f\|_{A(G)}$; a finite probability space $(\Omega,\P)$; and functions $g_\omega, h_\omega \in L_2(m_G)$ of unit $L_2(m_G)$-norm for all $\omega \in \Omega$ such that
\begin{equation*}
f(x)=M\E_\omega{\wt{h_\omega} \ast g_\omega(x)} \text{ for all }x \in G.
\end{equation*}
There is also a finite dimensional Hilbert space $H$; a homomorphism $\pi:G \rightarrow \Aut(H)$; and elements $v,w \in H$ such that $\|v\|\|w\| \leq \|f\|_{A(G)}$ and
\begin{equation*}
f(x)=\langle \pi(x)v,w\rangle \text{ for all }x \in G.
\end{equation*}
\end{lemma}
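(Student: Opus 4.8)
The plan is to read both assertions directly off the singular value decomposition of the operator $T\colon g\mapsto f\ast g$ on $L_2(m_G)$ that defines $\|f\|_{A(G)}$. Since $G$ is finite, $T$ is a finite-rank operator on a finite-dimensional space, so it has a singular value decomposition $T=\sum_{j=1}^{L}\sigma_j\langle\,\cdot\,,\phi_j\rangle\psi_j$ with $\sigma_1,\dots,\sigma_L>0$ and $(\phi_j)_{j=1}^{L}$, $(\psi_j)_{j=1}^{L}$ orthonormal in $L_2(m_G)$; by the definition of the algebra norm $\sum_j\sigma_j=\|f\|_{A(G)}=:M$. (If $f=0$ both claims are trivial, so assume $f\neq0$; then $M>0$ and $L\geq1$.)

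The one substantive step is to recover $f$ itself from this data. Viewed as an integral operator with respect to $m_G$, $T$ has kernel $(a,b)\mapsto f(ab^{-1})$ --- this is just the formula for $f\ast g$ recalled in \S\ref{sec.not} --- so comparing kernels gives $f(ab^{-1})=\sum_j\sigma_j\psi_j(a)\overline{\phi_j(b)}$ for all $a,b\in G$. Substituting $(a,b)=(xc,c)$ and averaging over $c\in G$ (using $(xc)c^{-1}=x$) yields
\begin{equation*}
f(x)=\sum_{j=1}^{L}\sigma_j\,\E_{c\in G}{\psi_j(xc)\overline{\phi_j(c)}}\quad\text{for all }x\in G,
\end{equation*}
and it remains to recognise the inner average in the two forms demanded.

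For the first part, replacing $c$ by $c^{-1}$ and using the definitions of $\ast$ and of $g\mapsto\wt{g}$ from \S\ref{sec.not} identifies $\E_{c\in G}{\psi_j(xc)\overline{\phi_j(c)}}$ with $(\psi_j\ast\wt{\phi_j})(x)$, so $f=\sum_j\sigma_j\,\psi_j\ast\wt{\phi_j}$. I would then take $\Omega:=\{1,\dots,L\}$ with $\P(\{j\}):=\sigma_j/M$, and set $h_\omega:=\wt{\psi_\omega}$, $g_\omega:=\wt{\phi_\omega}$; since $g\mapsto\wt{g}$ is an $L_2(m_G)$-isometry these have unit norm, and $\wt{h_\omega}\ast g_\omega=\psi_\omega\ast\wt{\phi_\omega}$, so $f(x)=M\,\E_\omega{\wt{h_\omega}\ast g_\omega(x)}$ with $M=\|f\|_{A(G)}$.

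For the second part, replacing $c$ by $x^{-1}c$ instead identifies $\E_{c\in G}{\psi_j(xc)\overline{\phi_j(c)}}$ with $\langle\psi_j,\lambda_x\phi_j\rangle_{L_2(m_G)}$, where $\lambda$ is the left regular representation, $\lambda_x(g)(u):=g(x^{-1}u)$; a one-line check gives $\langle\psi_j,\lambda_x\phi_j\rangle=\langle\lambda_x\overline{\phi_j},\overline{\psi_j}\rangle$, whence $f(x)=\sum_j\sigma_j\langle\lambda_x\overline{\phi_j},\overline{\psi_j}\rangle$. Now let $H$ be the orthogonal direct sum of $L$ copies of $L_2(m_G)$, let $\pi\colon G\to\Aut(H)$ act as $\lambda_x$ in each of the $L$ summands (a homomorphism into the unitary group of $H$, since $\lambda$ is unitary), and put $v:=\bigoplus_j\sqrt{\sigma_j}\,\overline{\phi_j}$, $w:=\bigoplus_j\sqrt{\sigma_j}\,\overline{\psi_j}$. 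Then $f(x)=\langle\pi(x)v,w\rangle$ and $\|v\|^2=\|w\|^2=\sum_j\sigma_j=\|f\|_{A(G)}$, so $\|v\|\|w\|=\|f\|_{A(G)}$. I do not expect any real obstacle: the argument is entirely elementary, and the only point needing care is to match the conventions of \S\ref{sec.not} --- which of the regular representations, acting on which side, and where the complex conjugate and the tilde fall --- so that the two displayed identities emerge in precisely the stated shape.
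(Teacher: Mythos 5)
Your proof is correct and follows essentially the same route as the paper: both extract the two representations from the singular value decomposition of the convolution operator $g\mapsto f\ast g$ (the paper evaluates it on translates of the point mass at the identity, you compare kernels, which amounts to the same computation), take the probability measure proportional to the singular values for the first statement, and realise the second statement inside a finite direct sum of copies of $L_2(m_G)$ carrying a regular representation. The only differences are cosmetic (left versus right regular representation, and where the tilde/conjugation is placed).
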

\begin{proof}
Let $(\lambda_\omega)_{\omega}$ be the singular values of $g\mapsto f \ast g$ with corresponding bases $(v_\omega)_{\omega}$ and $(w_\omega)_{\omega}$ so that $f\ast v_\omega = \lambda_\omega w_\omega$.  Put $u:=|G|1_{\{1_G\}}$, whence
\begin{equation*}
f(x)=\int{\rho_{t^{-1}}(f \ast \rho_t(u)) (x)dm_G(t)} = \sum_\omega{\lambda_\omega\int{\rho_{t^{-1}}(w_\omega)\langle \rho_t(u),v_\omega\rangle dm_G(t)}}.
\end{equation*}
Let $\P(\omega)=\lambda_\omega\|f\|_{A(G)}^{-1}$ (which is a probability measure by definition of $\|\cdot \|_{A(G)}$) and put $h_\omega:=\wt{w_\omega}$ and $g_\omega(t):=\langle \rho_t(u),v_\omega\rangle$.  It is then easy to check that $\|h_\omega\|_{L_2(m_G)} = \|w_\omega\|_{L_2(m_G)} = 1$ for all $\omega$; and furthermore
\begin{equation*}
\|g_\omega\|_{L_2(m_G)}^2 = \int{|\langle \rho_t(u),v_\omega\rangle|^2 dm_G(t)}=\|v_\omega\|_{L_2(m_G)}^2 = 1
\end{equation*}
for all $\omega$.  The first part follows.

Given the first representation note that
\begin{equation*}
\wt{h_\omega} \ast g_\omega(x) = \langle g_\omega,\rho_x(h_\omega)\rangle_{L_2(m_G)}.
\end{equation*}
Let $H:=L_2(m_G)^{\Omega}$ and $\rho$ the right regular representation in the natural way.  The result then follows from a calculation.
\end{proof}

It is useful to make a further definition: we write $\mathcal{N}(G)$ for the set of symmetric neighbourhoods of $G$ and for $Z,Z^+,Z^- \subset G$ non-empty and $X \in \mathcal{N}(G)$ we say that $(Z,X;Z^+,Z^-)$ is \textbf{$\eta$-closed} if
\begin{equation*}
Z^-X \subset Z , ZX^{-1} \subset Z^+ \text{ and } \frac{|Z^+\setminus Z^-|}{|Z|} \leq \eta.
\end{equation*}
Frequently we shall simply say $(Z,X)$ is $\eta$-closed and introduce $Z^+$ and $Z^-$ as needed.

Such sets support an approximate invariant measure captured by the following lemma.  The proof is immediate.
\begin{lemma}\label{lem.inv}
Suppose that $(Z,X)$ is $\eta$-closed.  Then
\begin{equation*}
\sup{\{|\rho_x(f\ast m_Z)(t) - f \ast m_Z(t)|: x \in X\}}=\|f\ast m_Z-f \ast m_Z(t)\|_{L_\infty(tX)} \leq \eta \|f\|_{L_\infty(G)}.
\end{equation*}
\end{lemma}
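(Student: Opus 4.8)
The equality in the statement is pure bookkeeping: $\rho_x(f\ast m_Z)(t)=(f\ast m_Z)(tx)$, and as $x$ ranges over $X$ the point $tx$ ranges over the coset $tX$, so the left-hand supremum is by definition $\|\,s\mapsto f\ast m_Z(s)-f\ast m_Z(t)\,\|_{L_\infty(tX)}$. So the real content is the inequality, and the plan is to turn the difference into a convolution of $f$ against a signed measure of small total mass. Using the identity $\rho_x(f\ast m_Z)=f\ast\rho_x(m_Z)$ recorded above, together with the fact that $\rho_x(m_Z)$ is the uniform probability measure on the right translate $Zx^{-1}$ of $Z$ (which is how $\rho$ extends to $M(G)$), I would write
\[
\rho_x(f\ast m_Z)(t)-f\ast m_Z(t)=f\ast\bigl(m_{Zx^{-1}}-m_Z\bigr)(t),
\]
and observe that $m_{Zx^{-1}}-m_Z=\tfrac1{|Z|}\bigl(\delta_{Zx^{-1}}-\delta_Z\bigr)$ is supported on $Z\triangle Zx^{-1}$ and has total variation $|Z\triangle Zx^{-1}|/|Z|$. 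Hence $|\rho_x(f\ast m_Z)(t)-f\ast m_Z(t)|\le\|f\|_{L_\infty(G)}\,|Z\triangle Zx^{-1}|/|Z|$, uniformly in $t$.

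It then remains to see that $|Z\triangle Zx^{-1}|\le|Z^+\setminus Z^-|$ for every $x\in X$. Since $X\in\mathcal{N}(G)$ is a symmetric neighbourhood it contains $1_G$ and satisfies $X=X^{-1}$, so from $\eta$-closedness we get $Z^-\subset Z^-X\subset Z$ and $Z\subset ZX^{-1}\subset Z^+$, and also $Z^-x\subset Z^-X\subset Z$ (equivalently $Z^-\subset Zx^{-1}$) and $Zx^{-1}\subset ZX^{-1}\subset Z^+$. Thus both $Z$ and $Zx^{-1}$ are trapped between $Z^-$ and $Z^+$, whence $Z\triangle Zx^{-1}\subset Z^+\setminus Z^-$ and $|Z\triangle Zx^{-1}|\le|Z^+\setminus Z^-|\le\eta|Z|$. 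Substituting back gives the claimed bound $\eta\|f\|_{L_\infty(G)}$.

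I do not expect any genuine obstacle — this is why the lemma is flagged as immediate. The only points requiring (minor) care are keeping the left/right translation conventions consistent when identifying $\rho_x(m_Z)$ with $m_{Zx^{-1}}$, and remembering to use $1_G\in X$ so that $Z$ itself, and not merely $Zx^{-1}$, is sandwiched between $Z^-$ and $Z^+$.
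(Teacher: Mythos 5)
Your proof is correct: the identification $\rho_x(f\ast m_Z)-f\ast m_Z=f\ast(m_{Zx^{-1}}-m_Z)$, the total-variation bound via $Z\triangle Zx^{-1}\subset Z^+\setminus Z^-$ (using $1_G\in X$ and $X=X^{-1}$), and the rewriting of the supremum over $X$ as an $L_\infty(tX)$ norm are exactly the routine verification the paper leaves implicit when it says the proof is immediate. Nothing is missing.
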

The representation $\rho$ is isometric on $L_p(G)$ and we preserve an approximate version of this in the following facts:
\begin{equation*}
 \|\rho_x(f)\|_{L_p(m_{BX})} = \|f\|_{L_p(m_{BX})} \text{ whenever }f \in L_p(m_B),
\end{equation*}
and
\begin{equation*}
\|\rho_x(f)\|_{L_p(m_B)}^p\leq \frac{|BX|}{|B|}\|f\|_{L_p(m_{BX})}^p \text{ whenever } f \in L_p(m_{BX}).
\end{equation*}

Finally we shall need the following version of Ruzsa's covering lemma; the argument is the same as \cite[Lemma 2.14]{taovu::}.
\begin{lemma}[Ruzsa's covering lemma]\label{lem.rcl}
Suppose that $X,W \subset G$ are non-empty and have $D|W|\geq |WX|$.  Then there is a set $T \subset X$ of size at most $D$ such that $\{W^{-1}Wt:t \in T\}$ is a cover of $X$.
\end{lemma}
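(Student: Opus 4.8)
The plan is to run the classical greedy argument: choose a maximal family of translates of $W$ by elements of $X$ that are pairwise disjoint, and then read off the cover. The only real care needed is bookkeeping of handedness, since $G$ is not assumed abelian and the conclusion involves $W^{-1}W$ multiplying $t$ on the \emph{left}. Concretely, I would let $T \subseteq X$ be a maximal subset with the property that the left translates $\{Wt : t \in T\}$ are pairwise disjoint. Such a $T$ exists and is non-empty, since $X \neq \emptyset$ and any singleton $\{t_0\}$ with $t_0 \in X$ trivially has this property; the disjointness constraint is what will force $|T|$ to be small, and maximality is what will force $T$ to generate a cover.

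For the bound on $|T|$: every $t \in T$ lies in $X$, so $Wt \subseteq WX$; as the sets $Wt$ ($t \in T$) are pairwise disjoint and each has cardinality $|W|$ (left translation is a bijection of $G$), we get $|T|\,|W| = \bigl|\bigcup_{t \in T} Wt\bigr| \le |WX| \le D|W|$, and dividing by $|W| \ge 1$ gives $|T| \le D$. For the covering property, fix $x \in X$. If $x \in T$ then $x \in W^{-1}Wt$ with $t = x$ (pick any $w \in W$ and write $x = w^{-1}wx$). If $x \notin T$, then by maximality $\{Wt : t \in T \cup \{x\}\}$ is not pairwise disjoint, which — since $\{Wt : t \in T\}$ already was — forces $Wx \cap Wt \neq \emptyset$ for some $t \in T$; picking $w_1,w_2 \in W$ with $w_1 x = w_2 t$ gives $x = w_1^{-1}w_2 t \in W^{-1}Wt$. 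Hence $X \subseteq \bigcup_{t \in T} W^{-1}Wt$, which is the claimed conclusion.

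I do not expect any genuine obstacle here: the argument is purely combinatorial, and the two substantive points are the double-counting inequality (where disjointness of the chosen family is exactly what is used) and the consistent use of left- rather than right-multiplication so that the set-product $W^{-1}Wt$ in the statement comes out on the correct side. Everything else is routine.
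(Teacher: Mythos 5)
Your proof is correct and is exactly the standard greedy argument (maximal family of pairwise disjoint translates $Wt$, double counting for $|T|\leq D$, maximality for the cover) that the paper itself invokes by citing Tao--Vu; the only quibble is cosmetic, namely calling $Wt$ a left translate when it is a right translate of $W$, which does not affect anything.
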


\section{Overview and conditional proof}\label{sec.over}

In the introduction we discussed two ways to view this work.  In this section we give an overview of the proof of our results and also highlight a third way to view our arguments, this time through an additive combinatorial lens.

The overall structure of our argument is fairly typical for additive combinatorics, translating the work of \cite{gresan::0} to the non-abelian setting.  This can be an involved affair as seen in \cite{san::9}, and part of our hope here is that the shorter and sharper arguments we now give will be more illuminating and useful.

The main approach is inductive but works over the class of functions whose values are \emph{almost} integers -- we say $f:G \rightarrow \C$ is \textbf{$\epsilon$-almost integer-valued} if there is some $f_\Z:G \rightarrow \Z$ such that $\|f-f_\Z\|_{L_\infty(G)} <\epsilon$.  If $\epsilon \in \left(0,\frac{1}{2}\right]$ then $f_\Z$ is uniquely defined and we shall always assume it is.

We shall prove the following result.
\begin{theorem}\label{thm.ky}
There is an absolute constant $C>0$ such that if $f$ is $\epsilon$-almost integer-valued, $\|f\|_{A(G)} \leq M$, and $\epsilon \leq \exp(-CM)$, then there is some $L=O(M)$, subgroups $H_1,\dots,H_L \leq G$, and integer-valued functions $z^{(i)} \in \ell_1(G/H_i)$ (for $1\leq i \leq L$) such that
\begin{equation*}
f_\Z=\sum_{i=1}^L{\sum_{W \in G/H_i}{z_W^{(i)}1_W}} \text{ and } \|z^{(i)}\|_{\ell_1(G/H_i)} \leq \exp(\exp(\exp(O(M^2)))).
\end{equation*}
\end{theorem}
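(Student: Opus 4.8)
The natural strategy is a density-increment / energy-increment induction on $M$, following the abelian blueprint of Green--Sanders \cite{gresan::0} but transported to the non-abelian world via the representation-theoretic data supplied by Lemma~\ref{lem.various}. Fix $f$ as in the statement, write $f_\Z$ for its integer rounding, and consider a level set, say $Z := \{x \in G : f_\Z(x) \geq 1\}$ (or, more robustly, $\{x : |f(x)| \geq \tfrac12\}$). The key structural input is that $1_Z$, or rather $f$ itself restricted near this set, should look approximately invariant under translation by some large ``Bohr-type'' set $X \in \mathcal{N}(G)$. Concretely, from the representation $f(x) = M \E_\omega \wt{h_\omega} \ast g_\omega(x)$ one extracts, by an averaging/pigeonhole argument over the singular directions, a symmetric neighbourhood $X$ on which $f \ast m_Z$ varies by at most $\eta$ in $L_\infty$ — i.e.\ one builds a tuple $(Z, X; Z^+, Z^-)$ that is $\eta$-closed in the sense defined before Lemma~\ref{lem.inv}, with $\eta$ small (polynomially small in $1/M$ will do) and $|X|$ not too small relative to $|G|$ (here ``not too small'' is controlled by a Fourier-dimension count coming from $\|f\|_{A(G)} \le M$, and is where the tower-type bound on $\|z^{(i)}\|$ will enter).

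**Main steps.** (1) \emph{Find the approximately-invariant set.} Using Lemma~\ref{lem.various} and Lemma~\ref{lem.inv}, produce $X \in \mathcal{N}(G)$ and an $\eta$-closed quadruple $(Z,X;Z^+,Z^-)$ with $\eta = \eta(M)$ small; the size of $X$ is bounded below in terms of $M$ and $\epsilon$ only. (2) \emph{Promote $X$ to a subgroup.} The set $X^\infty := \bigcup_k X^k$ — or a suitable large iterate $X^k$ for $k$ of order $1/\eta$ — is, by the $\eta$-closedness and an iteration of Lemma~\ref{lem.inv}, a set on which $f \ast m_Z$ is genuinely constant; better, one runs a Ruzsa-covering argument (Lemma~\ref{lem.rcl}) on $X$ to show $X^{O(1)}$ is covered by few translates of a genuine subgroup $H_1 \le G$ of large index-bounded-by-$M$-stuff, replacing the approximate structure by an exact coset structure. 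Then $f \ast m_{H_1}$ is an honest function constant on cosets of $H_1$, it is again almost-integer-valued (one checks the rounding survives the convolution because the variation is $<\epsilon$ over each $H_1$-coset, which is the point of having forced $\eta$ small relative to $\epsilon$), and Lemma~\ref{lem.split} gives the crucial \emph{exact} norm decomposition
\[
\|f\|_{A(G)} = \|f - f\ast m_{H_1}\|_{A(G)} + \|f \ast m_{H_1}\|_{A(G)}.
\]
(3) \emph{Induct.} The new function $f - f \ast m_{H_1}$ is again almost-integer-valued, has strictly smaller $A(G)$-norm — by at least a constant, since $f \ast m_{H_1}$ is a nonzero integer-ish combination of coset indicators, each of norm $1$, so $\|f \ast m_{H_1}\|_{A(G)} \ge 1 - O(\epsilon)$ — so the induction terminates after $L = O(M)$ steps, yielding subgroups $H_1, \dots, H_L$ and the decomposition of $f_\Z$. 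At each step the $\ell_1$-mass of the new coset-function is controlled by $\|f \ast m_{H_i}\|_{A(G)} \cdot [G:H_i]$ or similar, and $[G:H_i]$ is the quantity bounded by the triple exponential — this is inherited from the lower bound on $|X|$ in step (1), which is genuinely tower-type because locating a significant ``representation-space direction'' of an $A(G)$-function of norm $M$ costs a tower in $M$ (the same phenomenon as in Bogolyubov-type arguments and their quantitative versions).

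**The main obstacle.** The hard part is step (2): in the non-abelian setting one cannot simply pass to a subgroup by intersecting Bohr sets, and the analogue of ``$\langle X\rangle$ is a reasonable subgroup'' requires care — $X^k$ can blow up. The fix is to only ask for $f \ast m_{H_1}$ to be constant on $H_1$-cosets for a subgroup $H_1$ obtained by a covering argument, accepting a worse (but still admissible) index bound, rather than insisting $H_1 \subseteq X^{O(1)}$; this is exactly the point where the argument must diverge from Cohen's original abelian proof and where one pays for non-abelianness. A secondary subtlety, pervasive throughout, is bookkeeping the almost-integrality: each convolution and subtraction must preserve the hypothesis $\epsilon \le \exp(-CM)$ (with the \emph{same} $\epsilon$, or at worst a mild loss absorbed into $C$), since the induction runs $O(M)$ times and we cannot afford $\epsilon$ to degrade geometrically. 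Managing the interplay between $\eta$ (how approximately-invariant), $\epsilon$ (how-almost-integer), $|X|$ (which feeds the tower bound), and the number of induction steps is the real content; the rest is the routine operator-theory and covering-lemma calculus already set up in \S\ref{sec.not}.
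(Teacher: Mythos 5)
Your top-level induction is the right skeleton and matches the paper: find a subgroup $H$, show $f\ast m_H$ is almost integer-valued with $(f\ast m_H)_\Z$ a short integer combination of coset indicators, subtract, use Lemma \ref{lem.split} to drop the algebra norm by a constant, and stop after $L=O(M)$ steps. But the core of the argument -- how to produce $H$ with usable bounds -- is where your proposal has genuine gaps, and in one place it is keyed to a quantity that cannot be bounded at all. You propose to bound $[G:H_i]$ (and the density $|X|/|G|$) in terms of $M$; this is impossible: the indicator of a singleton in $\Z/p\Z$ has $\|f\|_{A(G)}=1$ while the only admissible subgroup is trivial, of index $|G|$. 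The correct normalisation, and the one the paper uses, is relative to $|\supp f_\Z|$: Lemma \ref{lem.itlem} produces $H$ with $|H|\geq \exp(-\exp(\exp(O(M^2))))|\supp f_\Z|$, and the $\ell_1$-bound on $z^{(i)}$ then comes from comparing $|\supp (f_i\ast m_{H_{i+1}})_\Z|\leq 2M|\supp (f_i)_\Z|$ with this lower bound on $|H_{i+1}|$, not from any index bound. Relatedly, your step (2) (``promote $X$ to a subgroup by Ruzsa covering, with index bounded by $M$-stuff'') is not what happens and would fail; the paper instead takes $H$ to be the subgroup \emph{generated} by the small symmetric set $B$, using that $(f\ast m_X)_\Z$ varies by less than $\frac12$ under $B$-translations and is therefore constant on left cosets of $\langle B\rangle$, with no control on the index needed or available.

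Two further ingredients that your sketch treats as routine are in fact the main content. First, you cannot extract the approximately invariant neighbourhood by ``averaging over singular directions'' applied to a level set of $f$: when $f$ is only almost integer-valued, the additive energy of $\supp f_\Z$ may live off the support, so one first needs Proposition \ref{prop.infstruct} (arithmetic connectivity, the non-abelian version of Green's argument) to locate $S\subset\supp f_\Z$ with $|SS^{-1}|\leq M^{O(M)}|S|$ and $|S|\geq M^{-O(M)}|\supp f_\Z|$, and only then Lemma \ref{lem.fr} and Proposition \ref{prop.inv} (Croot--Sisask plus Cotlar--Stein, not a pigeonhole) to get the $\eta$-closed pair $(X,B)$ on which $\|f-f\ast m_X\|_{L_p(m_{tB})}$ is small. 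Second, your assertion that $f\ast m_{H_1}$ is ``a nonzero integer-ish combination'' is exactly the point that can fail: convolving with a badly chosen subgroup can smear $f$ so that $(f\ast m_H)_\Z\equiv 0$, in which case the norm does not drop and the induction stalls. The paper rules this out via the non-concentration estimate (\ref{eqn.uq}), which forces some translate $tB$ to meet $\supp f_\Z$ in proportion $M^{-O(M)}$, contradicting $(f\ast m_H)_\Z\equiv 0$ once $p$ is taken as large as $O(M\log M)$. Without these three pieces -- the arithmetic-connectivity input, the correct (support-relative) size bookkeeping, and the non-vanishing argument -- the proposal does not yet constitute a proof.
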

First note that Theorem \ref{thm.mn2} follows immediately.

In thinking about this statement it may help to note that since $z^{(i)}$ is integer-valued the upper bound on $\|z^{(i)}\|_{\ell_1(G/H_i)}$ yields an upper bound on the size of the support of $z^{(i)}$ -- that is on the number of cosets of $H_i$ where $f_{\Z}$ can have support.
\begin{proof}[Proof of Theorem \ref{thm.mn}]
The argument is purely a manipulation of notation and is better answered through a picture.

Since $f$ is Boolean we have $f=f_\Z$ and so $f$ is certainly $\exp(-CM)$-integer-valued; we can apply Theorem \ref{thm.ky} to get subgroups $H_1,\dots,H_L \leq G$ and integer-valued functions $z^{(i)} \in \ell_1(G/H_i)$ such that
\begin{equation*}
f=\sum_{i=1}^L{\sum_{W \in G/H_i}{z_W^{(i)}1_W}} \text{ and } \|z^{(i)}\|_{\ell_1(G/H_i)} \leq \exp(\exp(\exp(O(M^2)))).
\end{equation*}
Let $g_1^{(i)}H_i<\dots<g_{R_i}^{(i)}H_i$ be an arbitrary ordering of the $R_i=\exp(\exp(\exp(O(M^2))))$ cosets of $H_i$ in the support of $z^{(i)}$.  Construct a coset decision tree $T^{(i)}$ with a root at $g_1^{(i)}H_i$; an edge from $g_j^{(i)}H_i$ to $g_{j+1}^{(i)}H_i$ for all $j<R_i$ labelled $0$; and a leaf at every vertex labelled $1$.  (The tree is really a decision list\footnote{See \cite[Exercise 3.23]{odo::1}.} -- see Figure \ref{fig.2}.)
\begin{figure}
\centering
\begin{tikzpicture}[level/.style={level distance=95pt, sibling distance=50mm/#1}]
\node [circle,draw] {$g_1^{(i)}H_i$}
  child[grow=right] {node [circle,draw] {$g_2^{(i)}H_i$}
    child[grow=down] {node [rectangle,draw,yshift=30pt]  {$*$}
    edge from parent
    node[right, yshift=5pt] {\scriptsize{$x \in g_2^{(i)}H_i$}}
    node[left,yshift=5pt] {\scriptsize{$1$}}
      } 
   child[grow=right] {node {$\cdots$}
      child[grow=right] {node [circle,draw] {$g_{R_i}^{(i)}H_i$}
    child[grow=down] {node [rectangle,draw,yshift=30pt]  {$*$}
    edge from parent
    node[right, yshift=5pt] {\scriptsize{$x \in g_{R_i}^{(i)}H_i$}}
    node[left,yshift=5pt] {\scriptsize{$1$}}}
    child[grow=right] {node [rectangle,draw]  {$*$}
    edge from parent
    node[above, yshift=5pt] {\scriptsize{$x \in G\setminus g_{R_i}^{(i)}H_i$}}
    node[below] {\scriptsize{$0$}}
      } 
      edge from parent
    node[above, yshift=5pt] {\scriptsize{$x \in G\setminus g_{R_i-1}^{(i)}H_i$}}
    node[below] {\scriptsize{$0$}}
      }edge from parent
    node[above, yshift=5pt] {\scriptsize{$x \in G\setminus g_2^{(i)}H_i$}}
    node[below] {\scriptsize{$0$}}
    }
    edge from parent
    node[below] {\scriptsize{$0$}}
    node[above, yshift=5pt] {\scriptsize{$x \in G\setminus g_1^{(i)}H_i$}}
  }
  child[grow=down] {node [rectangle,draw,yshift=30pt]  {$*$}
  edge from parent
    node[right, yshift=5pt] {\scriptsize{$x \in g_1^{(i)}H_i$}}
    node[left,yshift=5pt] {\scriptsize{$1$}}
};
\end{tikzpicture}
\caption{The coset decision tree $T^{(i)}$ with asterisks where a root of one of $R_i+1$ copies of $T^{(i+1)}$ go.}\label{fig.2}
\end{figure}

We produce $T$ iteratively and it is convenient to enlarge our class of coset decision trees to include integer-values on the leaves not just values in $\{0,1\}$.  The final tree we produce has values in $\{0,1\}$.  We start with $T_1:=T^{(1)}$ and write $l_i$ for the leaf-value function on $T_i$.  At stage $i<L$ take every vertex $v$ of degree one and append a copy of $T^{(i+1)}$ such that $v$ is the root of $T^{(i+1)}$, copying all the edge values from $T^{(i+1)}$ in the obvious way.  Given a leaf $w$ in the new graph, let $w'$ be the vertex it is connected to and define
\begin{equation*}
l_{i+1}(w):=\begin{cases} l_i(v) + z_{g_j^{(i+1)}H_{i+1}}^{(i+1)}& \text{ if }w' \text{ is labelled }g_j^{(i+1)}H_{i+1} \text{ and }ww' \text{ has value }1\\
l_i(v) & \text{ otherwise.}
\end{cases}
\end{equation*}
We terminate with $T:=T_L$.  At the end of this process, suppose that we look at a computation path for $x \in G$.  This gives us a unique path from the root to some vertex $v$, say
\begin{equation}\label{eqn.path}
v_{1,1},\dots,v_{r_1,1},v_{1,2},\dots,v_{r_2,2},v_{1,3},\dots,v_{r_{L-1},L-1},v_{1,L},\dots,v_{r_L,L},v_{1,L+1}:=v,
\end{equation}
such that $v_{j,i}$ is labelled $g_j^{(i)}H_i$ for $1 \leq j \leq r_i$ and $1\leq i \leq L$.  The fact that all the cosets of $H_1$ occur before those of $H_2$ \emph{etc.} simply reflects the order we built up $T$.

Write $I \subset \{1,\dots,L\}$ for the set of indices such that the edge between $v_{r_i,i}$ and $v_{1,i+1}$ is a $1$.  If the value of the edge between $v_{r_i,i}$ and $v_{1,i+1}$ is $0$ then $r_i=R_i$ and $x \in G\setminus g_j^{(i)}H_i$ for all $1 \leq j \leq R_i$ and so $x \not \in \bigcup{\supp z^{(i)}}$.  If the value of the edge between $v_{r_i,i}$ and $v_{1,i+1}$ is $1$ then $x \in G\setminus g_j^{(i)}H_i$ for all $1 \leq j < r_i$ and $x \in g_{r_i}^{(i)}H_i$.  It follows that
\begin{equation*}
f(x)=\sum_{i=1}^L{\sum_{W \in G/H_i}{z_W^{(i)}1_W(x)}}=\sum_{i \in I}{z^{(i)}_{g_{r_i}^{(i)}H_i}}=l_L(v)
\end{equation*}
as required.  The total number of leaves of the resulting coset decision tree is at most $(R_1+1)\cdots (R_L+1) \leq \exp(\exp(\exp(O(M^2))))$ as claimed.
\end{proof}

To prove Theorem \ref{thm.ky} we need two key ingredients.  The first shows us how to find structure in the support of $f_\Z$ when $f$ has small algebra norm and is almost integer-valued.  We shall prove this in \S\ref{sec.ac}.
\begin{proposition}\label{prop.infstruct}
There is an absolute $C>0$ such that if $f$ is $\epsilon$-almost integer-valued with $\|f\|_{A(G)} \leq M$ and $\epsilon \leq \exp(-CM)$, then there is some $S \subset \supp f_\Z$ such that $|SS^{-1}| \leq M^{O(M)}|S|$ and $|S| = M^{-O(M)}|\supp f_\Z|$.
\end{proposition}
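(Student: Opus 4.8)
The plan is to use the first representation from Lemma \ref{lem.various} to write $f(x) = M'\E_\omega \wt{h_\omega} \ast g_\omega(x)$ with $M' \le M$ and $\|g_\omega\|_{L_2(m_G)} = \|h_\omega\|_{L_2(m_G)} = 1$, and then exploit that on $\supp f_\Z$ the left-hand side has modulus at least $1 - \epsilon \ge \tfrac12$. First I would set $A := \supp f_\Z$ and note $|f(x)| \ge 1/2$ for $x \in A$, so by averaging there is a ``popular'' value of $\omega$, or rather one extracts from the bound $\tfrac12 |A| \le M' \E_\omega \sum_{x \in A}|\wt{h_\omega}\ast g_\omega(x)| / |G|$ that for many $\omega$ the inner-product function $x \mapsto \langle g_\omega, \rho_x(h_\omega)\rangle$ is not too small on a large part of $A$. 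The Cauchy--Schwarz / Bessel-type inequality $\sum_x |\langle g_\omega,\rho_x(h_\omega)\rangle|^2 \le |G| \|g_\omega\|_2^2 \|h_\omega\|_2^2 = |G|$ (using that $\rho$ is unitary, so the $\rho_x(h_\omega)$ form an overcomplete but norm-controlled family) then forces the set where $|\langle g_\omega,\rho_x(h_\omega)\rangle| \ge t$ to have size at most $|G|/t^2$, which combined with the lower bound pins the relevant scale.

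The core of the argument, and the step I expect to be the main obstacle, is converting ``$|\langle g_\omega,\rho_x(h_\omega)\rangle|$ is bounded below on a set $S \subseteq A$'' into an \emph{approximate-group} conclusion $|SS^{-1}| \le M^{O(M)}|S|$. The mechanism should be: if $x, y \in S$ then $\rho_x(h_\omega)$ and $\rho_y(h_\omega)$ both have non-negligible inner product with the fixed unit vector $g_\omega$, hence (by the parallelogram law / a geometric pigeonhole in the unit ball of $L_2(m_G)$) the vectors $\rho_x(h_\omega)$ for $x \in S$ cluster into $M^{O(M)}$ groups, each of diameter small enough that within a group $\|\rho_x(h_\omega) - \rho_y(h_\omega)\|_2$ is small, i.e.\ $\|h_\omega - \rho_{x^{-1}y}(h_\omega)\|_2$ is small. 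The set $B := \{ g \in G : \|h_\omega - \rho_g(h_\omega)\|_2 < c\}$ is an approximate group (it is symmetric, contains the identity, and $\|h_\omega - \rho_{gg'}(h_\omega)\|_2 \le \|h_\omega - \rho_g(h_\omega)\|_2 + \|\rho_g(h_\omega) - \rho_{gg'}(h_\omega)\|_2 = \|h_\omega - \rho_g(h_\omega)\|_2 + \|h_\omega - \rho_{g'}(h_\omega)\|_2$ by unitarity), and $S^{-1}S$ is covered by $M^{O(M)}$ translates of it; running the covering bound (Lemma \ref{lem.rcl}) and the standard approximate-group arithmetic then gives $|SS^{-1}| \le M^{O(M)}|S|$. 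One has to be a little careful that $S$ is genuinely a constant power of $M$ fraction of $A$ and not merely a translate-cluster of unspecified size; this is where the counting from the previous paragraph — bounding both $\E f$ type quantities and the $\ell_2$ mass — has to be balanced against the number of clusters.

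The remaining bookkeeping is routine: choose the threshold $t = M^{-O(M)}$ so that simultaneously (i) the super-level set of $|\langle g_\omega,\rho_x(h_\omega)\rangle|$ captures a $M^{-O(M)}$-fraction of $A$ (using the lower bound $\tfrac12|A|$ against the trivial upper bound $t|G| + (\text{mass above }t)$, and the $\ell_2$ bound to control the latter), and (ii) the clustering radius $c$ can be taken comparable to $t$ so that the number of clusters is $(1/t)^{O(1)} = M^{O(M)}$; then pass to the largest cluster $S$, which has $|S| \ge M^{-O(M)} |A|$ and lies in a single translate of the approximate group $B$, whence $|SS^{-1}| \le |BB^{-1}| \cdot (\text{something}) \le M^{O(M)} |S|$ after one more application of Lemma \ref{lem.rcl}. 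The only inputs used are Lemma \ref{lem.various}, the unitarity of $\rho$ on $L_2(m_G)$, elementary Hilbert-space geometry, and Ruzsa covering, so the constant $C$ in $\epsilon \le \exp(-CM)$ is needed only to keep $1 - \epsilon \ge \tfrac12$ and can in fact be taken tiny here — the genuinely $\exp(-CM)$-small hypothesis is presumably exploited more seriously later in \S\ref{sec.ac} when upgrading this raw structure to the full Theorem \ref{thm.ky}.
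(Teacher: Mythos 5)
The fatal step is the clustering claim in your second paragraph. From $|\langle g_\omega,\rho_x(h_\omega)\rangle|\geq t$ for all $x$ in some $S\subset\supp f_\Z$ you cannot conclude that the unit vectors $\rho_x(h_\omega)$ fall into $M^{O(M)}$ groups of small diameter: the set $\{u\in L_2(m_G):\|u\|=1,\ |\langle u,g_\omega\rangle|\geq t\}$ is a band of diameter about $\sqrt{2}$ (e.g.\ the vectors $tg_\omega+\sqrt{1-t^2}\,e$, $e\perp g_\omega$, all lie in it and are pairwise far apart), and covering it by balls of radius $c<1$ requires a number of balls growing with the dimension of the Hilbert space --- here potentially $|G|$ --- not $(1/t)^{O(1)}$. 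Parallelogram-law arguments force $\|\rho_x(h_\omega)-\rho_y(h_\omega)\|_2$ to be small only when the inner products are close to the maximum $1$, which would need $M=1+o(1)$; at the scale $t\sim M^{-1}$ that your averaging actually produces there is no such rigidity, so no approximate group $B$, no covering of $S^{-1}S$, and no bound on $|SS^{-1}|$. (Relatedly, even granting clusters, the largest cluster need not contain an $M^{-O(M)}$-fraction of $S$, since the number of clusters is dimension-dependent, and the set $\{g:\|h_\omega-\rho_g(h_\omega)\|_2<c\}$ only satisfies $B\cdot B\subset\{g:\|h_\omega-\rho_g(h_\omega)\|_2<2c\}$, which by itself gives no doubling bound.) This is essentially the difficulty the paper flags at the start of \S\ref{sec.ac}: pointwise information about $f$ on $\supp f_\Z$ does not directly convert into doubling/energy for $\supp f_\Z$, because the relevant mass can live off the support.

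The paper's proof goes by a genuinely different route: it uses the second (representation-theoretic) form of Lemma \ref{lem.various} together with a Chebychev-polynomial argument in the style of M{\'e}la (Lemma \ref{lem.st}) to show $\supp f_\Z$ is $(O(M^3),O(M))$-arithmetically connected, and then an averaging argument plus the Balog--Szemer{\'e}di--Gowers lemma (Lemma \ref{lem.s}) to extract $S$ with $|S|=M^{-O(M)}|\supp f_\Z|$ and $|SS^{-1}|\leq M^{O(M)}|S|$; some BSG-type input seems unavoidable for the passage from ``many structured products land back in $A$'' to small doubling, and your outline contains no substitute for it. Note also that, contrary to your closing remark, the hypothesis $\epsilon\leq\exp(-CM)$ is used seriously already in this proposition: in the connectivity argument the error term is $\epsilon\exp(O(l))$ with $l=\Theta(M)$, so exponential smallness of $\epsilon$ in $M$ is exactly what is needed.
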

When $G$ is abelian the conclusion $|SS^{-1}| \leq K|S|$ (in additive notation $|S-S| \leq K|S|$) output above is the input for Fre{\u\i}man-type theorem's.  The output of these \emph{e.g.} \cite[Theorem 5.46]{taovu::} is a set of the form $P(d_1,N_1)+\dots+P(d_r,N_r) +H$ where $H \leq G$ and $P(d_i,N_i)$ is an arithmetic progression of length $2N_i+1$ and common difference $d_i$ centred at $0_G$.  For us the important feature is that the sets $B_i:=P(d_1,2^{-i}N_1)+\dots+P(d_r,2^{-i}N_r) +H$ (for $i \in \N_0$) form a base for a topology on $G$ with some nice properties. In particular, all the sets $B_i$ are symmetric neighbourhoods of the identity with
\begin{equation}\label{eqn.K}
B_{i+1} + B_{i+1} \subset B_i \text{ and } |B_{i+1}| = \Omega_K(|B_i|) \text{ for all }i \in\N_0
\end{equation}
and
\begin{equation*}
|B_0| = \Omega_K(|A|) \text{ and }B_0 \subset 2A-2A.
\end{equation*}
The fact that the $\Omega$-term in (\ref{eqn.K}) does not depend on $i$ captures the linear structure of characters, and replicating this is a key hurdle in proving a Fre{\u\i}man-type theorem in general groups.  This (and much more) has now been achieved by Breuillard, Green and Tao in \cite[Theorem 1.6]{bregretao::0}, but at the cost of weaker dependencies.  We take a different approach and accept some (relatively) mild $i$ dependence in exchange for better $K$-dependence.
\begin{lemma}\label{lem.fr}
Suppose that $A$ is non-empty and $|AA^{-1}| \leq K|A|$ and $\eta \in (0,1]$.  Then there are $Z,Y \in \mathcal{N}(G)$ such that $(Z,Y^4)$ is $\eta$-closed with 
\begin{equation*}
|Y| \geq \exp(-O(\eta^{-2}\log^{O(1)}2K))|A|\text{ and } m_{A^{-1}} \ast 1_{AA^{-1}} \ast m_{A}(x) > \frac{1}{2} \text{ for all }x \in (Z^+)^4.
\end{equation*}
In particular, $(Z^+)^4 \subset A^{-1}AA^{-1}A$.
\end{lemma}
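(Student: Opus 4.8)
The plan is to realise $A^{-1}AA^{-1}A$ as the support of the concrete ``Bogolyubov function''
\[
g:=m_{A^{-1}}\ast 1_{AA^{-1}}\ast m_A,
\]
and to build $(Z,Z^+,Z^-)$ as level sets of $g$. First I would record the elementary facts about $g$: it is real and non-negative, satisfies $\wt{g}=g$, has $g(1_G)=1$, has $\|g\|_{L_1(m_G)}=|AA^{-1}|/|G|$, and is supported on $A^{-1}AA^{-1}A$. The support statement is exactly what makes the final sentence of the lemma automatic once we know $(Z^+)^4\subseteq\{x:g(x)>\tfrac12\}$. Thus the lemma reduces to two tasks: (i) find a large $Y\in\mathcal N(G)$ under which $g$ is \emph{genuinely} (i.e.\ in $L_\infty$) almost invariant under right translation by $Y^4$; and (ii) choose a threshold $\tau$ so that $Z:=\{g\ge\tau\}$, $Z^-:=\{g\ge\tau+\kappa\}$, $Z^+:=\{g\ge\tau-\kappa\}$ form an $\eta$-closed quadruple with $(Z^+)^4$ still inside $\{g>\tfrac12\}$, all without spoiling the size bound on $Y$.

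For (i) I would invoke the Croot--Sisask almost-periodicity lemma in its non-abelian form. Since $|AA^{-1}|\le K|A|$ (and hence $|A^{-1}AA^{-1}A|\le K^{O(1)}|A|$ by Pl\"unnecke--Ruzsa), for parameters $p$ and $\nu$ it produces a symmetric $Y\in\mathcal N(G)$ with $|Y|\ge \exp(-O(p\nu^{-2}\log 2K))|A|$ and $\|\rho_s(1_{AA^{-1}}\ast m_A)-1_{AA^{-1}}\ast m_A\|_{L_p(m_G)}\le\nu(|AA^{-1}|/|G|)^{1/p}$ for all $s\in Y$. Because $\rho$ is multiplicative and $L_p(m_G)$-isometric, telescoping over a product $s=s_1s_2s_3s_4$ upgrades this to the same bound with $4\nu$ in place of $\nu$ for every $s\in Y^4$. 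To pass from an $L_p$-bound on $1_{AA^{-1}}\ast m_A$ to an $L_\infty$-bound on $g=m_{A^{-1}}\ast(1_{AA^{-1}}\ast m_A)$, I would use $\rho_s(m_{A^{-1}}\ast F)=m_{A^{-1}}\ast\rho_s(F)$ together with the elementary estimate $\E_{a\in A}|\phi(ax)|\le(|G|/|A|)^{1/p}\|\phi\|_{L_p(m_G)}$, giving
\[
\sup_{x\in G}\,\sup_{s\in Y^4}|g(xs)-g(x)|\le 4\nu(|AA^{-1}|/|A|)^{1/p}\le 4\nu K^{1/p}.
\]
Choosing $p$ of order $\log 2K$ makes $K^{1/p}=O(1)$, so $g$ is $O(\nu)$-almost invariant under right $Y^4$-translation with $|Y|\ge\exp(-O(\nu^{-2}\log^{O(1)}2K))|A|$; evaluating at $x=1_G$ gives $g\ge 1-O(\nu)$ on all of $Y^4$.

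For (ii), fix $\kappa$ to be a suitable constant multiple of $\nu$. The almost invariance immediately yields $Z^-Y^4\subseteq Z$ and $ZY^4\subseteq Z^+$ (recall $Y^4$ is symmetric), so $(Z,Y^4;Z^+,Z^-)$ is closed as soon as $|Z^+\setminus Z^-|\le\eta|Z|$; and if $\tau$ is chosen close to $1$ then a short argument iterating the almost invariance four times (each step costing $O(\nu)$) keeps $g>\tfrac12$ on $(Z^+)^4$. The delicate point --- and the main obstacle --- is to choose $\tau$ making $|Z^+\setminus Z^-|\le\eta|Z|$ while keeping $\nu$ no smaller than a $\mathrm{polylog}(2K)$-factor times $\eta$, since any worse dependence of $\nu$ on $\eta$ degrades the bound on $|Y|$ below what is claimed. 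This is the non-abelian analogue of extracting a \emph{regular value} of a Bohr-set norm: one runs over a geometric sequence of thresholds between $\tfrac12$ and $1$, bounds the total mass of the corresponding shells $\{\tau-\kappa\le g<\tau+\kappa\}$ by $|\operatorname{supp}g|\le K^{O(1)}|A|$ while bounding each $|Z|$ from below by $|Y|$, and crucially uses Ruzsa's covering lemma (Lemma~\ref{lem.rcl}) --- applied to $Y^4$ against the relevant level sets --- to control the sizes of the product sets that arise, so that the number of affordable thresholds comes out large enough to force one of them to be regular. Balancing the Croot--Sisask precision against this regularity requirement, rather than the Croot--Sisask input or the bookkeeping with $\rho$ and the measures, is where the real work lies; the structure of $g$ (its positivity, $\wt g=g$, and the explicit support $A^{-1}AA^{-1}A$) is what keeps these product-set bounds independent of $|G|$.
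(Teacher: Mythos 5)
Your step (i) is essentially the same almost-periodicity input the paper uses (Lemma \ref{lem.br} is exactly a packaged Croot--Sisask statement), but the reduction to level sets of $g=m_{A^{-1}}\ast 1_{AA^{-1}}\ast m_A$ has three genuine gaps, all at points where the non-abelian setting bites. First, the appeal to Pl\"unnecke--Ruzsa to get $|A^{-1}AA^{-1}A|\leq K^{O(1)}|A|$ is invalid: for $A=H\cup\{x\}$ with $H$ a subgroup and $H\cap xHx^{-1}$ trivial one has $|AA^{-1}|\leq 4|A|$ while $A^{-1}AA^{-1}A\supseteq HxH$ has size about $|H|^2$; the paper flags exactly this failure right after the lemma, and it is the reason the conclusion is phrased through the convolution being $>\frac12$ rather than through a size bound on $A^{-1}AA^{-1}A$. (This particular use is patchable, since for thresholds above $\frac12$ one can replace $|\supp g|$ by $\sum_x g(x)=|AA^{-1}|\leq K|A|$ and Markov.) Second, your claim that $g>\frac12$ on $(Z^+)^4$ ``by iterating the almost invariance four times'' conflates two different sets: the almost invariance you proved is under right translation by elements of $Y^4$, whereas $Z^+$ is a level set of $g$, and a point where $g$ is large need not be an almost period of $g$; nothing places $Z^+$ inside a bounded power of $Y$, so you have no control of $g$ on products of elements of $Z^+$. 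Third, the threshold count you yourself call the main obstacle cannot be made to balance as you have set it up: with thresholds spaced at least $\kappa=\Theta(\nu)$ apart you have at most $O(1/\nu)$ of them, while a multiplicative pigeonhole in which each level set is bounded below only by $|Y|\geq \exp(-O(\nu^{-2}\log^{O(1)}2K))|A|$ needs about $\eta^{-1}\nu^{-2}\log^{O(1)}2K$ thresholds, which exceeds $O(1/\nu)$ for every admissible $\nu,\eta\in(0,1]$; the additive ``total mass of shells'' pigeonhole you describe is exponentially worse, and Ruzsa covering (Lemma \ref{lem.rcl}) does not remove this circularity.

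The paper's proof never takes level sets of $g$. It applies Lemma \ref{lem.br} twice: once with constant precision and $k=48$ to get $W\in\mathcal{N}(G)$ with $|W|\geq\exp(-O(\log^2 2K))|A|$ and $g>\frac12$ on a fixed large power of $W$, and once more to get $Y$ with $Y^{8r}\subset W^4$ and $|Y|\geq\exp(-O(r^2\log^{O(1)}2K))|A|$. It then pigeonholes over the nested product sets $Y^{8i}W^4Y^{8i}$, $0\leq i<r$, and sets $Z^-:=Y^{8i}W^4Y^{8i}$, $Z:=Y^4Z^-Y^4$, $Z^+:=Y^8Z^-Y^8$: the closure relations $Z^-Y^4\subset Z$ and $Z(Y^4)^{-1}\subset Z^+$ hold exactly by construction, all the sets being compared lie between $W^4$ and $W^{12}$ so the total ratio being pigeonholed is $\exp(O(\log^2 2K))$ \emph{independently of $r$}, and $(Z^+)^4\subset W^{48}$ delivers the convolution bound. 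This is the decoupling your scheme lacks: the number of levels $r=O(\eta^{-1}\log^2 2K)$ is limited only by $Y^{8r}\subset W^4$, whose cost enters $|Y|$ quadratically in $r$, rather than by an $L_\infty$ precision whose cost feeds back into the quantity being pigeonholed. If you want to retain a level-set flavour, work instead with level sets of the almost-periodicity seminorm $s\mapsto\|\rho_s(1_{AA^{-1}}\ast m_A)-1_{AA^{-1}}\ast m_A\|_{L_p(m_G)}$ (symmetric, subadditive under products, and consisting of genuine almost periods, which repairs the $(Z^+)^4$ step), confine the thresholds to a window of constant width so that every level set contains a constant-precision almost-period set of size $\exp(-\log^{O(1)}2K)|A|$ rather than merely $|Y|$, and only then take the spacing of order $\eta/\log^{O(1)}2K$; that version of your plan can be made to match the claimed bound.
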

We prove this in \S\ref{sec.tools}.

This corollary is designed to be used iteratively and the conclusion in terms of the convolution is there to deal with the first step when we may know that $|AA^{-1}| \leq K|A|$ but not that $|A^{-1}AA^{-1}A| =O_K(|A|)$.  (In the abelian setting Pl{\"u}nnecke's inequality \cite[Corollary 6.26]{taovu::} gives the latter as a consequence of the former but in non-abelian groups this need not be the case.  See the discussion after \cite[Proposition 2.38]{taovu::}.)

As well as providing us with a way of dealing with the output of Proposition \ref{prop.infstruct}, Lemma \ref{lem.fr} also provides us with a general way of replacing level sets of characters -- Bohr sets\footnote{See \cite[\S4.4]{taovu::}.} -- in the abelian setting.  (The obvious analogue of using characters or representations of non-abelian groups runs into complications with controlling the dimension of the representation.)  We use this to prove a sort of quantitative continuity result:
\begin{proposition}\label{prop.inv}
Suppose that $A \in \mathcal{N}(G)$ has $|A^2| \leq K|A|$; $f \in A(G)$ has $\|f\|_{A(G)} \leq M$; and $\epsilon,\eta \in (0,1]$ and $p \geq 2$ are parameters.  Then there are sets $X,B \in \mathcal{N}(G)$ such that $(X,B)$ is an $\eta$-closed pair with $(X^+)^4 \subset A^4$,
\begin{equation*}
|B| \geq \exp(-(\eta^{-1}MK)^{p\exp(O(\epsilon^{-2}))})|A| \text{ and } \sup_t{\|f-f \ast m_X\|_{L_p(m_{tB})}} \leq \epsilon M.
\end{equation*}
\end{proposition}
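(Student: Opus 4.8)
The plan is to mimic the abelian almost-periodicity argument of \cite{gresan::0}, using Croot--Sisask-style random averaging in place of the Bohr-set construction and Lemma~\ref{lem.fr} in place of Freiman's theorem. First I would set up the representation: by Lemma~\ref{lem.various} one has $\|f\|_{L_\infty(G)}\le\|f\|_{A(G)}\le M$, and there are a finite-dimensional Hilbert space $H$, a homomorphism $\pi\colon G\to\Aut(H)$, and $v,w\in H$ with $\|v\|_H,\|w\|_H\le\sqrt M$ and $f(x)=\langle\pi(x)v,w\rangle$ for all $x$. If $|A|<\exp((\eta^{-1}MK)^{p\exp(O(\epsilon^{-2}))})$ we may simply take $X=B=\{1_G\}$, so assume $|A|$ is large. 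The goal is then to find a large $X\subseteq A^4$ under which a suitably averaged copy of $f$ is almost invariant, and to realise $X$ as half of an $\eta$-closed pair.

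The averaging step runs as follows. Put $k:=\lceil Cp\epsilon^{-2}\rceil$, draw $a_1,\dots,a_k\in A$ independently and uniformly, and set $\nu:=\tfrac1k\sum_i\pi(a_i)v$ and $\Phi:=\tfrac1k\sum_i\rho_{a_i}f$, so that $\Phi(s)=\langle\pi(s)\nu,w\rangle$ and $\E_{\vec a}\Phi=f\ast m_A$. A Marcinkiewicz--Zygmund/Rosenthal estimate applied coordinatewise, using only $\|f\|_{L_\infty(G)}\le M$, gives
\begin{equation*}
\E_{\vec a}\,\|\Phi-f\ast m_A\|_{L_p(m_W)}^p\le(\epsilon M/10)^p\quad\text{for \emph{every} finite non-empty }W\subseteq G,
\end{equation*}
the key feature being that this is uniform in $W$. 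On the other hand, writing $x\vec a:=(xa_1,\dots,xa_k)$ one has the exact identity $\Phi(x\vec a)=\rho_x\Phi(\vec a)$, while $\|\rho_x g\|_{L_p(m_{tW})}=\|g\|_{L_p(m_{tWx})}$ and $\|f\ast m_{xA}-f\ast m_A\|_{L_\infty(G)}\le M|xA\mathbin{\triangle}A|/|A|$. Combining these with two Markov estimates produces, for each $x$ in the $(\epsilon/10)$-stabiliser $X_1:=\{x:|xA\mathbin{\triangle}A|\le(\epsilon/10)|A|\}$ of $A$ and each $t\in G$, a single tuple $\vec a^\ast$ witnessing both $\|\Phi(\vec a^\ast)-\rho_x(f\ast m_A)\|_{L_p(m_{tB})}\le\epsilon M/2$ and $\|\Phi(\vec a^\ast)-f\ast m_A\|_{L_p(m_{tB})}\le\epsilon M/2$, and hence $\sup_t\|\rho_x(f\ast m_A)-f\ast m_A\|_{L_p(m_{tB})}\le\epsilon M$ for all $x\in X_1$; averaging over $x\in X_1$,
\begin{equation*}
\sup_t\|f\ast m_A-(f\ast m_A)\ast m_{X_1}\|_{L_p(m_{tB})}\le\epsilon M
\end{equation*}
for \emph{whatever} symmetric $B$ is chosen later. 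The estimates underlying Lemma~\ref{lem.fr} give $|X_1|\ge\exp(-O(\epsilon^{-2}\log^{O(1)}2K))|A|$.

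It remains to pass from $f\ast m_A$ to $f$: the displays above control $f\ast m_A$, which is what one wants only when $A$ interacts well with $f$ (it fails, for instance, when $f\ast m_A$ is much flatter than $f$). Arranging this is the non-abelian analogue of separating the large and small parts of the Fourier transform, and I would do it iteratively. If $f\ast m_A$ is already $\epsilon M/2$-close to $f$ in every $L_p(m_{tB})$ we stop with $X=X_1$; otherwise the residual $f-f\ast m_A$ is non-negligible at scale $B$, one replaces $A$ by $A_1:=X_1^4\cap A^4$ -- a subset of $A^4$ of controlled doubling by Lemma~\ref{lem.fr} -- reruns the construction, and continues along a chain $A\supseteq X_1\supseteq A_1\supseteq X_2\supseteq\cdots$ inside $A^4$ along which $\|f-f\ast m_{X_j}\|_{L_p(m_{tB})}$ decreases; after $O(\epsilon^{-2})$ rounds it has dropped below $\epsilon M$. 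Each round raises the doubling constant by at most a bounded power, so tracking the iteration (with $M,K,\eta^{-1}$ feeding the base and the sampling count $k\asymp p\epsilon^{-2}$ the exponent) and finishing with one more application of Lemma~\ref{lem.fr} plus a pigeonhole over the closure radius -- invoking Lemma~\ref{lem.inv} to see that the resulting $f\ast m_X$ is genuinely $\eta M$-flat on translates of $B$ -- yields an $\eta$-closed pair $(X,B)$ with $(X^+)^4\subseteq A^4$, $|B|\ge\exp(-(\eta^{-1}MK)^{p\exp(O(\epsilon^{-2}))})|A|$ and $\sup_t\|f-f\ast m_X\|_{L_p(m_{tB})}\le\epsilon M$.

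The main obstacle is precisely this last iteration: showing it terminates in $O(\epsilon^{-2})$ rounds with the residual genuinely decreasing -- the honest non-abelian substitute for the energy-increment/spectrum-truncation argument -- and, more delicately, controlling how the doubling constant grows along the chain, so that the final bound on $|B|$ emerges as an exponential of a polynomial rather than as a tower, as in \cite{san::9}. This is exactly where Lemma~\ref{lem.fr}'s trade of a mild $i$-dependence for a good $K$-dependence is essential. A subsidiary subtlety, flagged above, is that the Croot--Sisask estimates must be run with the local measures $m_{tB}$ rather than $m_G$, which one can afford only because $\|f\|_{L_\infty(G)}\le M$ makes the underlying variance bound independent of the translate $t$.
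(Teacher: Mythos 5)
Your skeleton (Croot--Sisask-style sampling plus Lemma \ref{lem.fr} in place of a Fre{\u\i}man-type theorem, iterated with $O(\epsilon^{-2})$ rounds feeding the exponent) has the right shape, but two steps do not survive scrutiny. First, the set on which you transfer the sampled approximation is the measure-stabiliser $X_1=\{x:|xA\triangle A|\le(\epsilon/10)|A|\}$, and you assert $|X_1|\ge\exp(-O(\epsilon^{-2}\log^{O(1)}2K))|A|$ ``by the estimates underlying Lemma \ref{lem.fr}''. This is false, and Lemma \ref{lem.fr} claims nothing of the kind: small doubling does not make $A$ itself approximately translation-invariant. For a counterexample take $A$ a uniformly random density-$\tfrac12$ subset of a subgroup $H\le\F_2^n$ (adjusted to contain $1_G$): then $AA^{-1}\subset H$ gives $K=O(1)$, yet $|xA\cap A|\approx\tfrac12|A|$ for every $x\in H\setminus\{1_G\}$ and $xA\cap A=\emptyset$ for $x\notin H$, so $X_1=\{1_G\}$ for every $\epsilon\in(0,1]$. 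What Lemma \ref{lem.fr} produces is a \emph{new} pair $(Z,Y^4)$ with $Z\subset A^{-1}AA^{-1}A$ approximately invariant under $Y^4$; almost-invariance of the convolution has to be extracted from the sampling argument itself. That is exactly the role of Corollary \ref{cor.keyt}: the random tuples are combined with Ruzsa's covering lemma (Lemma \ref{lem.rcl}) to give a bounded cover of a prescribed set $T$ by cells on each of which every element shifts $f\ast\nu$ by at most $\epsilon\|\nu\|\|f\|_{L_p}$; Lemma \ref{lem.int} runs this locally on an $\eta$-closed pair, and Lemma \ref{lem.fr} is then applied to (the inverse of) the large cell $W$ so obtained --- not to $A$.

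Second, and more seriously, your terminating iteration is only asserted: you say the residual ``genuinely decreases'' along the chain $A\supset X_1\supset A_1\supset\cdots$ and that $O(\epsilon^{-2})$ rounds suffice, and you yourself flag this as the main obstacle, but no mechanism is offered and there is no reason for monotone decrease. The paper's resolution is not an energy increment at all. If at stage $i$ some translate $t$ has $\|f-f\ast m_{X_i}\|_{L_p(m_{tB_i^-})}>\epsilon M$, one normalises $t=1_G$ by replacing $w$ with $\pi(t)^*w$, uses Lemma \ref{lem.int} to manufacture $Z_{i+1}$ with $\|f-f\ast m_{Z_{i+1}}\|_{L_p(m_{B_i^-})}\le\tfrac12\epsilon M$, and hence obtains $\|(\wh{m_{Z_{i+1}}}(\pi)-\wh{m_{X_i}}(\pi))v\|>\tfrac12\epsilon M$. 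The nested $\nu$-closed structure of the $X_i$ and $Z_{i+1}$ makes these operator increments pairwise almost orthogonal (products of distinct increments have norm $O(\nu)$), so the Cotlar--Stein lemma applied to random signed sums gives $n(\tfrac12\epsilon M)^2\le O(1+\nu n)^2M^2$, forcing termination after $n=O(\epsilon^{-2})$ steps once $\nu=\Omega(\epsilon^2)$. This Bessel-type almost-orthogonality argument is the missing idea; without it (or a genuine substitute) the proof does not close, and the exponent $p\exp(O(\epsilon^{-2}))$ in the bound on $|B|$, which you correctly anticipate as ``number of rounds in the exponent'', is unjustified.
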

We prove this result in \S\ref{sec.is}, the key tool is Corollary \ref{cor.keyt} recorded and proved in \S\ref{sec.tools}.

The fact that the argument gives a triply exponential bound is a result of iterative application of Lemma \ref{lem.fr}, but the fact that it is triply-exponential in $O(\epsilon^{-2})$ (rather than, say, $O(\epsilon^{-1})$) comes from the application of the Cotlar-Stein lemma at the end of the proof of Proposition \ref{prop.inv}.  It seems conceivable that this might be improved.

With these tools recorded we are ready to stitch them together to give our main iteration lemma.
\begin{lemma}\label{lem.itlem}
There is an absolute constant $C>0$ such that if $f$ is $\epsilon$-almost integer-valued, $\|f\|_{A(G)} \leq M$, $\eta>0$ is a parameter and $\epsilon \leq \exp(-CM)$, then there is some $H \leq G$ such that $f\ast m_H$ is $(\epsilon + \eta)$-almost integer-valued, $(f\ast m_H)_\Z \not \equiv 0$ and $|H| \geq \exp(-\exp(\exp(O(M^{2}+\log \log \eta^{-1}))))|\supp f_\Z|$.
\end{lemma}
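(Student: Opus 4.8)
The plan is to produce, in three stages, an honest subgroup $H\leq G$ whose size is bounded below in terms of $|\supp f_\Z|$ and on whose left cosets $f$ is almost constant, and then to read off the two assertions about $f\ast m_H$ by a rounding argument. \emph{First}, apply Proposition~\ref{prop.infstruct} --- the only place the hypothesis $\epsilon\leq\exp(-CM)$ is spent --- to extract $S\subset\supp f_\Z$ with $|SS^{-1}|\leq K|S|$ where $K:=M^{O(M)}$, and $|S|\geq M^{-O(M)}|\supp f_\Z|$. \emph{Second}, apply Lemma~\ref{lem.fr} to $A:=S$ with a fixed parameter $\eta_0\in(0,1]$: this gives $Z,Y\in\mathcal{N}(G)$ with $(Z,Y^4)$ $\eta_0$-closed, $|Y|\geq\exp(-\mathrm{polylog}\,M)|S|$, $(Z^+)^4\subset S^{-1}SS^{-1}S$, and $m_{S^{-1}}\ast 1_{SS^{-1}}\ast m_S>1/2$ on $(Z^+)^4$. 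The convolution clause is exactly what circumvents the unavailability of Pl\"unnecke's inequality: writing $m_{S^{-1}}\ast 1_{SS^{-1}}\ast m_S(x)=\E_{s,s'\in S}1_{SS^{-1}}(sx(s')^{-1})$ and summing over $x\in(Z^+)^4$ gives $|(Z^+)^4|\leq 2|SS^{-1}|\leq 2K|S|$, so that (since $|Z|\geq|Y|/2$) the symmetric neighbourhood $Z$ satisfies $|Z^2|\leq|(Z^+)^4|\leq M^{O(M)}|Z|$ and $|Z|\geq\exp(-\mathrm{polylog}\,M)|S|$. \emph{Third}, feed $A:=Z$ into Proposition~\ref{prop.inv} with continuity parameter $\epsilon'\approx 1/M$, closedness parameter $\eta'$ a small constant, and $p=2$, obtaining $X,B\in\mathcal{N}(G)$ with $(X,B)$ $\eta'$-closed, $(X^+)^4\subset Z^4$, $|B|\geq\exp(-\exp(\exp(O(M^2))))|Z|$, and $\sup_t\|f-f\ast m_X\|_{L_1(m_{tB})}\leq\epsilon' M\approx 1/4$.

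The deduction from a suitable $H$ is routine. If $H\leq G$ satisfies $\sup_t\|f-f\ast m_H(t)\|_{L_1(m_{tH})}\leq\eta$, then $f\ast m_H$ is constant on each left coset $tH$, with value $c_t=\E_{x\in tH}f(x)$, so $\E_{x\in tH}|f_\Z(x)-c_t|\leq\eta+\epsilon$; as $f_\Z$ is integer-valued, some point of $tH$ carries an integer within $\eta+\epsilon$ of $c_t$, whence $c_t$ lies within $\eta+\epsilon$ of $\Z$ and $f\ast m_H$ is $(\eta+\epsilon)$-almost integer-valued. For $(f\ast m_H)_\Z\not\equiv 0$ one uses that the whole construction is keyed to $S\subset\supp f_\Z$ --- the containment $(Z^+)^4\subset S^{-1}SS^{-1}S$ together with the convolution clause recording exactly that the fine structure underlying $H$ sits inside a bounded power of $S$ --- so that an averaging (Ruzsa-covering type) argument locates a coset of $H$ meeting $\supp f_\Z$ in at least $M^{-O(M)}|\supp f_\Z|$ points; on that coset the $L_1$ bound and $\eta+\epsilon<1/2$ force $f_\Z$ to be a single nonzero integer on a majority of points, so the corresponding $c_t$ rounds to a nonzero integer. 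The lower bound on $|H|$ then assembles from the $M^{O(M)}$ of the first stage, the $\exp(\mathrm{polylog}\,M)$ of the second, the $\exp(-\exp(\exp(O(M^2))))$ of the third, and $|H|\geq|B|$.

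The real work is the bridge from the third stage to the hypothesis of the rounding step: upgrading ``$f$ is continuous at the symmetric-neighbourhood scale $B$ at the cheaply obtainable quality $1/4$'' to ``$f$ has $L_1$ mean deviation at most $\eta$ on cosets of an honest subgroup $H$ with $|H|\geq|B|$''. Two difficulties arise. First, approximate $B$-invariance must become exact invariance under a subgroup: I would take $H:=\langle B\rangle$ (equivalently, use $m_B^{\ast k}\to m_H$) and control $f\ast m_B-f\ast m_H$ via Lemma~\ref{lem.inv}, which bounds the oscillation of $f\ast m_B$ under translation by $B$ by $\eta'\|f\|_{L_\infty(G)}\leq\eta' M$; choosing $\eta'$ small enough (at a cost already built into the third stage) keeps $H$ only boundedly larger than $B$ while carrying the $L_1$ continuity over to cosets of $H$. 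Second --- the crux --- the quality must be amplified from $1/4$ down to $\eta$: a single call to Proposition~\ref{prop.inv} cannot afford this cheaply, because its bound carries $\exp(O((\epsilon')^{-2}))$ in the continuity parameter (a cost traceable, as the authors note, to the Cotlar--Stein step), so instead one runs a bootstrap in which the defect $f-f\ast m_X$ --- of algebra norm at most $2M$ and small in $L_1(m_G)$ --- is passed back through the machinery at successively finer scales, each round shrinking the continuity defect roughly to its square, so that $\eta$ is reached after $O(\log\log\eta^{-1})$ rounds. Each round costs a density factor $\exp(\exp(\exp(O(M^2))))$ (the shape of Proposition~\ref{prop.inv} with $(\epsilon')^{-1}\approx M$ and $K=M^{O(M)}$ in the base of the power), and multiplying over $O(\log\log\eta^{-1})$ rounds yields $|H|\geq\exp(-\exp(\exp(O(M^2+\log\log\eta^{-1}))))|\supp f_\Z|$, the $\log\log\eta^{-1}$ being precisely the depth of the amplification loop. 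The chief obstacle is thus realising this bootstrap --- in particular establishing its super-linear per-round improvement --- while keeping $H$ an honest subgroup at every stage.
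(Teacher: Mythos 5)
You have the right scaffolding (Proposition~\ref{prop.infstruct}, then Lemma~\ref{lem.fr} applied to $S$ with the convolution clause standing in for Pl\"unnecke, then Proposition~\ref{prop.inv}, then rounding on cosets of $H=\langle B\rangle$ using Lemma~\ref{lem.inv}), but the step you yourself flag as the crux is a genuine gap, and the mechanism you propose for it does not work. Your bootstrap asks that re-feeding the defect $f-f\ast m_X$ through Proposition~\ref{prop.inv} ``shrink the continuity defect roughly to its square'' each round; no reason is given, and none is available: the defect still has algebra norm $\Theta(M)$, and Proposition~\ref{prop.inv} with continuity parameter $\epsilon'\approx 1/M$ only ever returns an error of size $\epsilon'\cdot\Theta(M)=\Theta(1)$, so the loop never gets below a constant, let alone converges doubly exponentially fast. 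Driving $\epsilon'$ down to $\eta/M$ instead costs $\exp(\exp(\exp(O(\eta^{-2}M^2))))$, which destroys the claimed bound. Note also that even for constant $\eta$ the non-vanishing clause forces you below constant precision: the density of $\supp f_\Z$ in the relevant translate of $B$ is only guaranteed to be $M^{-O(M)}$ (this is the content of the paper's inequality $\sup_t m_{tB}(\supp f_\Z)\geq M^{-O(M)}$, derived from the convolution clause before $B$ is chosen), so ``quality $1/4$'' or even ``quality $\eta$'' cannot give a majority of nonzero values on a coset; you need the exceptional set to be smaller than $M^{-O(M)}$ per coset.

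The lever you discarded by fixing $p=2$ (or $L_1$) is exactly the one the paper pulls. Keep the continuity parameter at $2^{-6}M^{-1}$ but take $p$ large in a single application of Proposition~\ref{prop.inv}: since $f$ is almost integer-valued and $(X,B)$ is $2^{-6}M^{-1}$-closed, the bound $\sup_t\|f-f\ast m_X\|_{L_p(m_{tB})}\leq 2^{-6}$ forces $(f\ast m_X)_\Z$ to be constant on cosets of $H=\langle B\rangle$ and, by raising the $L_p$ bound to the $p$-th power, makes the set where $(f\ast m_X)_\Z\neq f_\Z$ have measure at most $4^{-p}$ in every translate $tB$, hence (averaging over $B$ inside $tH=tHB$) in every coset of $H$. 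Integrating $|f_\Z-(f\ast m_X)_\Z|=O(M)$ over that exceptional set shows $f\ast m_H$ is $(\epsilon+O(M4^{-p}))$-almost integer-valued, and taking $p=O(\max\{M\log M,\log(M\eta^{-1})\})$ both brings the error under $\eta$ and beats the $M^{-O(M)}$ density bound, which rules out $(f\ast m_H)_\Z\equiv 0$. Because the cost of $p$ in Proposition~\ref{prop.inv} is only a factor $p$ in the exponent, i.e.\ $\log p=O(\log M+\log\log\eta^{-1})$ inside the triple exponential, this single application already yields $|H|\geq|B|\geq\exp(-\exp(\exp(O(M^2+\log\log\eta^{-1}))))|\supp f_\Z|$ --- the amplification you were trying to manufacture with an iteration is obtained for free from the mild $p$-dependence, and no per-round squaring (for which you have no argument) is needed.
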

\begin{proof}
Apply Proposition \ref{prop.infstruct} (possible provided $\epsilon \leq \exp(-CM)$) to get $S \subset \supp f_\Z$ such that
\begin{equation*}
|S| = M^{-O(M)}|\supp f_\Z| \text{ and }|SS^{-1}| \leq M^{O(M)}|S|.
\end{equation*}
By Lemma \ref{lem.fr} (applied to $S$ with parameter $1$) there is some $A \in \mathcal{N}(G)$ such that
\begin{equation*}
|A| \geq \exp(-M^{O(1)})|S|, \text{ and }m_{S^{-1}}\ast 1_{SS^{-1}}\ast m_S(x)>\frac{1}{2} \text{ for all }x \in A^4.
\end{equation*}
It follows that
\begin{equation*}
\frac{1}{2}|A^2|\leq \sum_{x \in A^2}{m_{S^{-1}}\ast 1_{SS^{-1}}\ast m_S(x)} \leq |SS^{-1}| \leq M^{O(M)}|S|,
\end{equation*}
so
\begin{equation*}
|A| \geq \exp(-M^{O(1)})|\supp f_\Z| \text{ and }|A^2| \leq \exp(M^{O(1)})|A|.
\end{equation*}
Moreover, if $Z\subset A^4$ then
\begin{equation*}
\frac{1}{2}\leq \langle m_{S^{-1}} \ast 1_{SS^{-1}}\ast m_S,m_Z\rangle = \langle m_S \ast \wt{m_Z},1_{SS^{-1}} \ast m_S\rangle_{\ell_2(G)} \leq \|1_S \ast \wt{m_Z}\|_{\ell_\infty(G)}|SS^{-1}||S|^{-1},
\end{equation*}
so
\begin{equation}\label{eqn.uq}
\sup_t{m_{tZ}(S)}=\|1_S \ast \wt{m_{Z}}\|_{\ell_\infty(G)} \geq M^{-O(M)} \text{ for all }\emptyset \neq Z \subset A^4.
\end{equation}
 Apply Proposition \ref{prop.inv} with parameters $2^{-6}M^{-1}$, $2^{-6}M^{-1}$, and $p$ (the last of which is to be optimised) to get $X,B \in \mathcal{N}(G)$ such that $(X,B)$ is $2^{-6}M^{-1}$-closed, $(X^+)^4 \subset A^4$ and
\begin{equation*}
|B| \geq \exp(-\exp(\exp(O(M^2 +\log p))))|\supp f_\Z| \text{ and }\sup_t{\|f-f\ast m_X\|_{L_p(m_{tB})}} \leq 2^{-6}.
\end{equation*}
We may assume that $\epsilon \leq 2^{-5}$ and hence by the triangle inequality and Lemma \ref{lem.inv} we have that
\begin{equation*}
\|f_\Z - f\ast m_X(t)\|_{L_p(m_{tB})} \leq 2^{-4} \text{ for all }t \in G.
\end{equation*}
It follows that $f\ast m_X$ is $2^{-4}$-almost integer-valued.  By Lemma \ref{lem.inv} for all $t\in G$ and $b \in B^{-1}=B$ we also have
\begin{align*}
|(f \ast m_X)_\Z(tb) - (f \ast m_X)_\Z(t)| & \leq |(f \ast m_X)_\Z(tb)-f \ast m_X(tb)|\\ & \qquad +  |f \ast m_X(tb)-f \ast m_X(t)|\\ & \qquad \qquad + |f \ast m_X(t)-(f \ast m_X)_\Z(t)| < \frac{1}{2}.
\end{align*}
It follows that $(f \ast m_X)_\Z$ is constant on left cosets of $H$, the group generated by $B$.

Now suppose $t \in G$ so that
\begin{align*}
m_{tB}\left(\left\{x \in tB: (f \ast m_X)_\Z(x)\neq f_\Z(x)\right\}\right) &\leq \|(f\ast m_X)_\Z - f_\Z\|_{L_p(m_{tB})}^p\\
& \leq   \big(\|(f\ast m_X)_\Z - f\ast m_X\|_{L_p(m_{tB})}\\
& \qquad \qquad +  \|f\ast m_X - f\|_{L_p(m_{tB})}\\
& \qquad \qquad \qquad \qquad + \|f - f_\Z\|_{L_p(m_{tB})}\big)^p\\
&\leq \left(2^{-4} + 2^{-6}+\epsilon\right)^p\leq 4^{-p},
\end{align*}
and since $H=HB$ it follows that
\begin{align*}
&m_{tH}\left(\left\{x \in tH: (f \ast m_X)_\Z(x)\neq f_\Z(x)\right\}\right)\\
&\qquad \qquad  = \frac{1}{|B|}\sum_{b \in B}{\frac{\left|\left\{x \in tHb: (f \ast m_X)_\Z(x)\neq f_\Z(x)\right\}\right|}{|H|}}\\
 &\qquad \qquad  \qquad \qquad   = \E_{h \in H}{\frac{\left|\left\{x \in thB: (f \ast m_X)_\Z(x)\neq f_\Z(x)\right\}\right|}{|B|}}\\
  &\qquad \qquad  \qquad \qquad  \qquad \qquad   = \E_{h \in H}{m_{thB}\left(\left\{x \in thB: (f \ast m_X)_\Z(x)\neq f_\Z(x)\right\}\right)}\leq 4^{-p}.
\end{align*}
We showed earlier that $(f \ast m_X)_\Z$ is constant on left cosets of $H$ so
\begin{equation*}
(f\ast m_X)_\Z\ast \wt{m_H}(t) = \int{(f\ast m_X)_\Z dm_{tH}} \in \Z.
\end{equation*}
But
\begin{equation*}
|f_\Z\ast \wt{m_H}(t) - (f\ast m_X)_\Z\ast \wt{m_H}(t)| \leq \int{|f_\Z - (f \ast m_X)_\Z|dm_{tH}} =O(M4^{-p}),
\end{equation*}
and so
\begin{equation*}
|f \ast \wt{m_H}(t) - (f\ast m_X)_\Z\ast \wt{m_H}(t)| \leq \epsilon + O(M4^{-p}).
\end{equation*}
Since $H=H^{-1}$ the function $f \ast m_H$ is $(\epsilon + O(M4^{-p}))$-almost integer-valued.

Finally, if $(f\ast m_{H})_\Z \equiv 0$ then $(f\ast m_X)_\Z\equiv 0$ and we have
\begin{equation}\label{eqn.con}
m_{tB}\left(\left\{x \in tB: 0\neq f_\Z(x)\right\}\right) \leq 4^{-p} \text{ for all }t \in G,
\end{equation}
but $B \subset A^4$ and so by (\ref{eqn.uq}) $\sup_t{m_{tB}(\supp f_\Z)} \geq M^{-O(M)}$.  It follows that we may take $p=O(\max\{M\log M,\log M\eta^{-1}\})$ such that $f \ast m_H$ is $(\epsilon+\eta)$-almost integer-valued and we have a contradiction to (\ref{eqn.con}) so that $(f\ast m_{H})_\Z \not \equiv 0$.  The lemma follows.
\end{proof}

\begin{proof}[Proof of Theorem \ref{thm.ky}]
Let $C>0$ be the absolute constant in the statement of Lemma \ref{lem.itlem}.  Let $\epsilon_i:=2^i\epsilon + 4^{i-2M-4}\exp(-CM)$.  We shall define functions $f_i$ such that
\begin{equation*}
f_i \text{ is $\epsilon_i$-almost integer-valued, } \|f_{i+1}\|_{A(G)} \leq\|f_i\|_{A(G)} - \frac{1}{2},
\end{equation*}
and so that there is a group $H_i \leq G$ and an integer-valued function $z^{(i)} \in \ell_1(G/H_i)$ with
\begin{equation*}
(f_i-f_{i+1})_\Z = \sum_{W \in G/H_i}{z_W^{(i)}1_W} \text{ and }\|z^{(i)}\|_{\ell_1(G/H_i)} \leq \exp(\exp(\exp(O(M^{2})))).
\end{equation*}
We set $f_0:=f$ which is certainly $\epsilon_0$-almost integer-valued (for $\epsilon$ sufficiently small).  At stage $i \leq 2M+1$ apply Lemma \ref{lem.itlem} with $\eta=4^{-2M-3}\exp(-CM)$ which is possible provided $\epsilon \leq \exp(-C'M)$.  We get $H_{i+1} \leq G$ with
\begin{equation*}
|H_{i+1}| \geq \exp(-\exp(\exp(O(M^2))))|\supp (f_i)_\Z| 
\end{equation*}
and
\begin{equation*}
 f_i \ast m_{H_{i+1}} \text{ is }(\epsilon_i+\eta)-\text{almost integer-valued}.
\end{equation*}
Put $f_{i+1}:=f_i - f_i \ast m_{H_{i+1}}$.  Then $f_{i+1}$ is $2\epsilon_i + \eta \leq \epsilon_{i+1}$ almost integer-valued.  For $\epsilon$ sufficiently small we have $\epsilon_i+\eta \leq \frac{1}{4}$ and so if $(f_i \ast m_{H_{i+1}})_\Z(x) \neq 0$ then $|f_i \ast m_{H_{i+1}}(x)| \geq \frac{3}{4}$ by the triangle inequality.  Similarly if $(f_i)_{\Z}(y)=0$ then $|f_i(y)|<\frac{1}{4}$, and so $|f_i(y)| < M\cdot 1_{\supp (f_i)_\Z}(y) + \frac{1}{4}$. Hence
\begin{equation*}
M\frac{|\supp (f_i)_\Z \cap (xH_{i+1})|}{|H_{i+1}|} +\frac{1}{4}=M1_{\supp(f_i)_\Z} \ast m_{H_{i+1}}(x) +\frac{1}{4} > |f_i \ast m_{H_{i+1}}(x)| \geq \frac{3}{4}
\end{equation*}
by the triangle inequality.  Since the left cosets of $H_{i+1}$ partition $G$ and $(f_i\ast m_{H_{i+1}})_\Z$ is invariant on left cosets of $H_{i+1}$ it follows that
\begin{align}\label{eqn.ineq}
|\supp (f_i\ast m_{H_{i+1}})_\Z|&=\sum_{xH_{i+1} \in G/H_{i+1}}{|H_{i+1}|1_{\supp(f_i\ast m_{H_{i+1}})_\Z}(x)}\\ \nonumber
& \leq \sum_{xH_{i+1} \in G/H_{i+1}}{2\cdot M|\supp (f_i)_\Z \cap (xH_{i+1})|}=2M|\supp (f_i)_\Z|.
\end{align}
Again, since the function $(f_i\ast m_{H_{i+1}})_\Z$ is invariant on left cosets of $H_{i+1}$ so it follows from (\ref{eqn.ineq}) and the lower bound on $|H_{i+1}|$ that $(f_i\ast m_{H_{i+1}})_\Z$ takes non-zero integer values on at most $ \exp(\exp(\exp(O(M^{2}))))$ left cosets of $H_{i+1}$.  Added to this, the value of $(f_i\ast m_{H_{i+1}})_\Z$ on each of these is an integer between $-(M+1)$ and $(M+1)$.  It follows that $(f_i-f_{i+1})_\Z=(f_i\ast m_{H_{i+1}})_\Z$ has the claimed form.

Finally, since $(f_i\ast m_{H_{i+1}})_\Z$ is not identically $0$ it follows that $\|f_i\ast m_{H_{i+1}}\|_{A(G)} \geq 1-\epsilon_{i+1} \geq \frac{1}{2}$ and hence $ \|f_{i+1}\|_{A(G)} =\|f_i\|_{A(G)} -\|f_i\ast m_{H_{i+1}}\|_{A(G)} \leq\|f_i\|_{A(G)} - \frac{1}{2}$ (by Lemma \ref{lem.split}).  In view of this the iteration terminates in $2M$ steps and unpacking what that means we have the result.
\end{proof}

\section{Croot-Sisask lemmas}\label{sec.tools}

The basic tool we need is a slightly adjusted version of \cite[Lemma 3.2]{croabasis::} (the proof of which is the same).  Before beginning we set some standard notation: if $g:\Omega \times Z \rightarrow \C$ and $\omega \in \Omega$ define
\begin{equation*}
g_\omega:Z \rightarrow \C; z \mapsto g(\omega,z).
\end{equation*}
\begin{lemma}\label{lem.cas}
Suppose that $\mu$ is a non-negative measure on a finite set; $\nu$ is a complex-valued measure on a finite set $\Omega$; $p \geq 2$ is a parameter; and $g \in L_p(|\nu| \times \mu)$.  Then there is a function $h$ with $|h(\omega)|=\|\nu\|$ for all $\omega \in \Omega$ and a positive integer $r=O(p\epsilon^{-2})$ such that
\begin{equation*}
|\nu|^r\left(\left\{ \omega \in \Omega^r: \left\| \int{g_{\omega'} d\nu(\omega')} - \frac{1}{r}\left(\sum_{i=1}^r{h(\omega_i)g_{\omega_i}}\right)\right\|_{L_p(\mu)} \leq \epsilon \|g\|_{L_p(|\nu|\times \mu)}\right\}\right) \geq \frac{1}{2}\|\nu\|^r.
\end{equation*}
\end{lemma}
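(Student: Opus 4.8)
The plan is to deploy a second-moment (variance) argument on independent random samples from the probability measure associated with $\nu$, in the spirit of Croot–Sisask sampling. First I would normalise: if $\|\nu\|=0$ there is nothing to prove, so assume $\|\nu\|>0$ and let $\tilde\nu := |\nu|/\|\nu\|$, a probability measure on $\Omega$. Write the complex measure $\nu$ in polar form as $d\nu(\omega) = h(\omega)\,d|\nu|(\omega)$ for a unimodular-up-to-scaling $h$; here a clean way is to take $h(\omega) := \|\nu\| \cdot \frac{d\nu}{d|\nu|}(\omega)$, so that $|h(\omega)| = \|\nu\|$ for all $\omega$ (at points where $|\nu|(\{\omega\})=0$ we may set $h(\omega):=\|\nu\|$ arbitrarily, since those points carry no mass), and $\int g_{\omega'}\,d\nu(\omega') = \int g_{\omega'} h(\omega')\,d|\nu|(\omega') = \|\nu\|\,\mathbb{E}_{\omega'\sim\tilde\nu}\big[\|\nu\|^{-1}h(\omega')g_{\omega'}\big]$. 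Setting $F_{\omega'} := \|\nu\|^{-1} h(\omega') g_{\omega'} \in L_p(\mu)$, the target average $\int g_{\omega'}\,d\nu(\omega')$ is exactly $\|\nu\|\,\mathbb{E}_{\omega'\sim\tilde\nu}[F_{\omega'}]$, while the empirical average $\tfrac1r\sum_i h(\omega_i)g_{\omega_i}$ equals $\|\nu\|\cdot\tfrac1r\sum_i F_{\omega_i}$. So after dividing through by $\|\nu\|$ the claim reduces to the standard statement: for i.i.d.\ samples $\omega_1,\dots,\omega_r\sim\tilde\nu$, with probability $\geq 1/2$ one has $\big\|\tfrac1r\sum_i F_{\omega_i} - \mathbb{E}_{\tilde\nu}F\big\|_{L_p(\mu)} \leq \epsilon\|F\|_{L_p(\tilde\nu\times\mu)}$, noting that $\|F\|_{L_p(\tilde\nu\times\mu)} = \|\nu\|^{-1}\cdot\|\nu\|^{1/p}\cdot\|g\|_{L_p(|\nu|\times\mu)}$ — so I should be slightly careful to track the exact power of $\|\nu\|$ and choose the normalisation of $h$ and the statement of $\|g\|_{L_p(|\nu|\times\mu)}$ consistently; this bookkeeping is the kind of thing that is routine but must be got right.

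The heart of the argument is the moment estimate. Fix $\epsilon>0$ and let $r$ be chosen later. Consider the random variable $\Phi := \big\|\tfrac1r\sum_{i=1}^r (F_{\omega_i} - \mathbb{E}_{\tilde\nu}F)\big\|_{L_p(\mu)}^p = \int_Z \big|\tfrac1r\sum_i (F_{\omega_i}(z) - \mathbb{E}_{\tilde\nu}F(z))\big|^p\,d\mu(z)$. For each fixed $z$, the quantities $X_i(z) := F_{\omega_i}(z) - \mathbb{E}_{\tilde\nu}F(z)$ are i.i.d., mean zero, and bounded in $L_p$ by $2\|F_\cdot(z)\|_{L_p(\tilde\nu)}$ (triangle inequality plus Jensen). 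The Marcinkiewicz–Zygmund / Rosenthal inequality — or, for $p$ an even integer, a direct multinomial expansion discarding all terms with a singleton index — gives $\mathbb{E}\big|\tfrac1r\sum_i X_i(z)\big|^p \leq (C p)^{p/2} r^{-p/2}\,\mathbb{E}_{\tilde\nu}|X_1(z)|^p$ for an absolute constant $C$; integrating over $z$ against $\mu$ and using Fubini yields $\mathbb{E}\,\Phi \leq (Cp)^{p/2} r^{-p/2}\cdot 2^p \|F\|_{L_p(\tilde\nu\times\mu)}^p$. By Markov's inequality, $\Phi \leq \epsilon^p\|F\|_{L_p(\tilde\nu\times\mu)}^p$ — equivalently the $L_p(\mu)$-norm is $\leq \epsilon\|F\|_{L_p(\tilde\nu\times\mu)}$ — fails with probability at most $2^p (Cp)^{p/2} (\epsilon^2 r)^{-p/2}$. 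Taking $r := \lceil 4^2 C p \epsilon^{-2}\rceil = O(p\epsilon^{-2})$ makes this failure probability at most $2^{-p}\cdot 2^{-p/2} \leq 1/2$ (even for $p=2$), which is exactly the claimed bound since $|\nu|^r(\text{good set}) = \|\nu\|^r\,\tilde\nu^{r}(\text{good set}) \geq \tfrac12\|\nu\|^r$.

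The main obstacle is getting the right tail bound for sums of independent $L_p(\mu)$-valued (equivalently, vector-valued) random variables with the correct, $\epsilon$-independent growth in $p$: this is precisely where the Marcinkiewicz–Zygmund constant $O(\sqrt p)$ enters and is what forces the sample size $r = O(p\epsilon^{-2})$ rather than something worse. If one wants to avoid quoting Rosenthal's inequality, the cleanest self-contained route is to restrict to $p$ a positive even integer (replacing a general $p$ by $2\lceil p/2\rceil$, which only changes constants), expand $\mathbb{E}\big|\tfrac1r\sum_i X_i(z)\big|^p$ as a sum over multi-indices, observe that any term in which some index appears exactly once vanishes by independence and mean-zero, count the surviving terms (each index appears at least twice, so at most $p/2$ distinct indices, giving $O((p/2)! \cdot r^{p/2})$ such terms after accounting for multiplicities), and bound each surviving product by Hölder in terms of $\mathbb{E}_{\tilde\nu}|X_1(z)|^p$; the factorial $(p/2)! \leq (p/2)^{p/2}$ supplies the $p^{p/2}$ and the $r^{p/2}$ divides the $r^p$ from the $1/r$ prefactor to give $r^{-p/2}$. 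Everything else — the polar decomposition of $\nu$, Fubini to swap the $\mu$-integral past the probability, and the conversion between the probability statement and the $|\nu|^r$-measure statement — is routine, modulo the careful tracking of powers of $\|\nu\|$ flagged above.
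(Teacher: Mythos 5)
Your proposal follows exactly the route the paper intends: the paper gives no standalone proof of this lemma, deferring to \cite[Lemma 3.2]{croabasis::}, and that proof is precisely your argument -- polar decomposition of $\nu$, i.i.d.\ sampling from $\tilde{\nu}=|\nu|/\|\nu\|$, a pointwise Marcinkiewicz--Zygmund (or even-integer multinomial) moment bound integrated via Fubini, then Markov, giving $r=O(p\epsilon^{-2})$ -- so in approach and in its core estimate your proof matches the intended one. The one real issue is the $\|\nu\|$ bookkeeping you flagged. Since $|F_\omega|=|g_\omega|$, the correct identity is $\|F\|_{L_p(\tilde{\nu}\times\mu)}=\|\nu\|^{-1/p}\|g\|_{L_p(|\nu|\times\mu)}$, not $\|\nu\|^{1/p-1}\|g\|_{L_p(|\nu|\times\mu)}$ as you wrote; multiplying your sampling estimate back through by $\|\nu\|$ therefore yields the conclusion with tolerance $\epsilon\|\nu\|^{1-1/p}\|g\|_{L_p(|\nu|\times\mu)}$ rather than the literal $\epsilon\|g\|_{L_p(|\nu|\times\mu)}$ in the statement, and these differ whenever $\|\nu\|\neq 1$. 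This is not a defect of your method: take $\Omega=\{1,2\}$, $\nu=c(\delta_1-\delta_2)$, $\mu$ a unit point mass and $g\equiv 1$; then $\int g\,d\nu=0$ and any $h$ of modulus $\|\nu\|=2c$ satisfies $\left|\frac{1}{r}\sum_i h(\omega_i)\right|\geq \|\nu\|\,|n_1-n_2|/r$ (with $n_j$ the number of indices $i$ with $\omega_i=j$), which on at least half of the strings is of order $\|\nu\|/\sqrt{r}$, while the stated tolerance is only $\epsilon\|g\|_{L_p(|\nu|\times\mu)}=\epsilon\|\nu\|^{1/p}$; so the statement as printed cannot hold with $r$ independent of $\|\nu\|$ once $\|\nu\|$ is large, and the $\|\nu\|^{1-1/p}$-form your argument produces is the correct normalisation. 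Reassuringly it is also exactly the form the paper uses downstream: in the proof of Corollary \ref{cor.keyt} the lemma is invoked with tolerance $\frac{1}{6}\epsilon\|\nu\|\|f\|_{L_p(m_{XA})}$, which equals $\frac{1}{6}\epsilon\|\nu\|^{1-1/p}\|g\|_{L_p(|\nu|\times m_{XA})}$. With that single correction to the power of $\|\nu\|$, your proof is complete and coincides with the paper's.
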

We also need \cite[Proposition 4.2]{san::00} with $S$ replaced by $S^{-1}$ and $T$ replaced by $T^{-1}$.
\begin{lemma}\label{lem.br}
Suppose that $A,S,T \subset G$ are finite and non-empty with $|AS^{-1}| \leq K|A|$ and $|ST| \leq L|S|$, and $k \in \N$ and $\epsilon \in (0,1]$ are parameters.  Then there is some $X \in \mathcal{N}(G)$ such that
\begin{equation*}
|X| \geq \exp(-O(\epsilon^{-2}k^2(\log 2K)( \log 2L)))|T|
\end{equation*}
such that
\begin{equation*}
|m_{A^{-1}} \ast 1_{AS^{-1}}\ast m_S(x)-1| \leq \epsilon \text{ for all }x \in X^k.
\end{equation*}
\end{lemma}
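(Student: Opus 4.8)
The goal is to prove Lemma~\ref{lem.br}, which asserts that for finite non-empty $A,S,T\subset G$ with $|AS^{-1}|\le K|A|$ and $|ST|\le L|S|$, and parameters $k\in\N$, $\epsilon\in(0,1]$, there is a large symmetric neighbourhood $X$ on which the normalised convolution $m_{A^{-1}}\ast 1_{AS^{-1}}\ast m_S$ is uniformly close to $1$ on $X^k$. Since the statement is cited as \cite[Proposition 4.2]{san::00} with $S,T$ replaced by their inverses, the plan is essentially to run the Croot--Sisask almost-periodicity machinery provided by Lemma~\ref{lem.cas}.

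The plan is as follows. First I would set $g(\omega, z) := 1_{AS^{-1}}(z\omega^{-1})$ for $\omega$ ranging over $T$ (or a suitable translate/inverse of it) equipped with the uniform probability measure $m_T$, and $z$ ranging over $G$ equipped with the measure $m_{A^{-1}}$ (since the outer convolution is against $m_{A^{-1}}$, after rewriting $m_{A^{-1}}\ast 1_{AS^{-1}}\ast m_S$ via the translation action of $\rho$). The key point is that $\int g_\omega\, dm_S(\omega)$ is exactly the function $x\mapsto m_{A^{-1}}\ast 1_{AS^{-1}}\ast m_S(x)$ evaluated appropriately, while each individual translate $g_\omega$ is controlled. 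Apply Lemma~\ref{lem.cas} with $\nu=m_S$ (so $\|\nu\|=1$), with the chosen $p$, and with a Croot--Sisask accuracy parameter of order $\epsilon/k$ rather than $\epsilon$, to obtain $r=O((k/\epsilon)^2 p)$ and a set of $r$-tuples $\vec{s}\in S^r$ of density at least $\tfrac12$ such that $m_{A^{-1}}\ast 1_{AS^{-1}}\ast m_S$ is $L_p(m_{A^{-1}})$-approximated by $\tfrac1r\sum_{i=1}^r \rho_{s_i}$-translates of $1_{AS^{-1}}\ast m_S$ (or the relevant averaged object). Then I would use a standard pigeonhole/Plünnecke-free covering argument: among the $|S|^r$ tuples, at least half land in the good set, so by averaging there exist two good tuples differing by a single coordinate, and iterating (or rather, taking the set of translation-differences of good tuples) produces a symmetric neighbourhood $X$ of the identity of size $\ge \exp(-O(r\log(2L)))\cdot|T| / \text{(a factor)}$ — here is where the hypothesis $|ST|\le L|S|$ enters, controlling how the translates $s_i$ spread $T$ around — such that translating by any element of $X$ moves the approximant by a controlled amount in $L_p$. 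Choosing $p$ appropriately (of size $\log(2K)$, say) to convert the $L_p(m_{A^{-1}})$ bound into an $L_\infty$-type bound on the relevant support, and iterating the translation bound $k$ times to cover $X^k$ (this is why the accuracy was taken to be $\epsilon/k$), gives $|m_{A^{-1}}\ast 1_{AS^{-1}}\ast m_S(x)-1|\le\epsilon$ on $X^k$ with the claimed size bound $|X|\ge\exp(-O(\epsilon^{-2}k^2(\log 2K)(\log 2L)))|T|$.

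The main obstacle — or at least the step requiring the most care — is the passage from the $L_p$-almost-periodicity statement produced by Lemma~\ref{lem.cas} to a genuine $L_\infty$ (pointwise) statement valid on all of $X^k$, together with correctly tracking how the three size hypotheses distribute across the final exponent. The $k^2$ in the exponent comes from running the single-step translation estimate $k$ times with accuracy $\epsilon/k$, so that the accumulated error stays below $\epsilon$; the $\log 2K$ comes from the choice of $p\sim\log 2K$ needed so that $\|h\|_{L_p(m_{A^{-1}})}$ small forces $h$ small on the portion of $AS^{-1}A^{-1}$ we care about (using $|AS^{-1}|\le K|A|$ to bound the measure); and the $\log 2L$ comes from the Ruzsa-covering / pigeonhole step where $X$ is extracted as a set of differences of good $r$-tuples inside $T$, with $|ST|\le L|S|$ bounding the number of ``buckets''. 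I would also need the approximate-invariance facts for $\rho$ on $L_p(m_B)$-type norms recorded just before Lemma~\ref{lem.rcl}, and Ruzsa's covering lemma (Lemma~\ref{lem.rcl}) itself, to clean up the final covering argument. Since the proof is stated to be ``the same'' as the cited one, I would largely transcribe that argument, being careful only about the $S\mapsto S^{-1}$, $T\mapsto T^{-1}$ substitution, which affects which convolutions appear on which side but not the structure of the estimate.
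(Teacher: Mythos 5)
The paper does not actually prove Lemma \ref{lem.br}: it imports \cite[Proposition 4.2]{san::00} (with $S$ and $T$ replaced by their inverses), and your sketch follows essentially the same route as that cited argument — Croot--Sisask almost-periodicity (Lemma \ref{lem.cas}) applied to $1_{AS^{-1}}$ with $\nu=m_S$ at accuracy $\epsilon/k$, a diagonal pigeonhole/covering step using $|ST|\le L|S|$ to extract a symmetric set of common almost-periods inside $T$, H\"older with $p\sim\log 2K$ (via $|AS^{-1}|\le K|A|$) to upgrade the $L_p$ bound to a pointwise one, and telescoping over $k$ translates. Your parameter accounting ($r=O(pk^2\epsilon^{-2})$, covering cost $(2L)^{O(r)}$) reproduces the stated bound, so the proposal is correct and takes the same approach as the source the paper cites.
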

With these recorded we turn to developing the consequences we need.
\begin{lemma*}[Lemma \ref{lem.fr}]
Suppose that $A$ is non-empty and $|AA^{-1}| \leq K|A|$ and $\eta \in (0,1]$.  Then there are $Z,Z^+,Z^-,Y \in \mathcal{N}(G)$ such that $(Z,Y^4;Z^+,Z^-)$ is $\eta$-closed with 
\begin{equation*}
|Y| \geq \exp(-O(\eta^{-2}\log^{O(1)}2K))|A|\text{ and } m_{A^{-1}} \ast 1_{AA^{-1}} \ast m_{A}(x) > \frac{1}{2} \text{ for all }x \in (Z^+)^4.
\end{equation*}
In particular, $(Z^+)^4 \subset A^{-1}AA^{-1}A$.
\end{lemma*}
\begin{proof}
Apply Lemma \ref{lem.br} with $T=A^{-1}$, $S=A$ and $k=48$ to get $W \in \mathcal{N}(G)$ with
\begin{equation*}
|W| \geq \exp(-O(\log^2 2K))|A| \text{ and }|m_{A^{-1}} \ast 1_{AA^{-1}}\ast m_A(x)-1| <\frac{1}{2} \text{ for all }x \in W^{12}.
\end{equation*}
Apply Lemma \ref{lem.br} with all sets equal to $W$ and a parameter $8r$ to get $Y \in \mathcal{N}(G)$ such that
\begin{equation*}
Y^{8r} \subset W^4 \text{ and } |Y| \geq \exp(-O(r^2\log^4K))|A|.
\end{equation*}
Now
\begin{equation*}
\prod_{i=0}^{r-1}{\frac{|Y^8Y^{8i}W^4Y^{8i}Y^8|}{|Y^{8i}W^4Y^{8i}|}} \leq \frac{|W^{12}|}{|W^4|} \leq  \exp(O(\log^2 2K)) ,
\end{equation*}
and so there is some $0 \leq i <r$ such that
\begin{equation*}
|Y^8Y^{8i}W^8Y^{8i}Y^8| \leq \left(\frac{|W^{12}|}{|W^4|}\right)^{\frac{1}{r}}|Y^{8i}W^{8i}Y^{8i}|.
\end{equation*}
Let $r=O(\eta^{-1}\log^2K)$ be such that the right hand side is at most $1+\eta$.  Set $Z:=Y^4Y^{8i}W^{4}Y^{8i}Y^4$, $Z^-:=Y^{8i}W^{4}Y^{8i}$ and $Z^+:=Y^8Y^{8i}W^{4}Y^{8i}Y^8$ which are all elements of $\mathcal{N}(G)$ and $Z^-Y^4 \subset Z$ and $Z(Y^4)^{-1} \subset Z^+$ so $(Z,Y^4)$ is $\eta$-closed.  Finally, $(Z^+)^4 \subset W^{48}$ from which the result follows.
\end{proof}
The following lemma is a version of the Croot-Sisask lemma \cite{crosis::} but set up to deal with signed-measures and to produce covers.  The covers make iterative applications of the lemma easier, essentially because the meet of a cover of size $K$ and a cover of size $L$ is a cover of size at most $KL$.  We shall see this benefit explicitly in the proof of Lemma \ref{lem.int}.
\begin{corollary}\label{cor.keyt}
Suppose that $X,A,S \subset G$ are non-empty with $D|A|\geq |AS|$; $\nu$ is a complex-valued measure, absolutely continuous w.r.t. $m_A$, with
\begin{equation*}
\left\|\frac{d\nu}{dm_A}\right\|_{L_2(m_A)}^2 \leq E\left\|\frac{d\nu}{dm_A}\right\|_{L_1(m_A)};
\end{equation*}
and $f \in L_p(m_X)$ for some $p \in [2,\infty)$; and $\epsilon \in (0,1]$ is a parameter.  Then there is a cover $\mathcal{P}$ of $S$ having size at most $\exp(O(\epsilon^{-2}p \log 2DE\epsilon^{-1}))$ such that
\begin{equation*}
\|\rho_{s^{-1}}(f\ast \nu)- \rho_{t^{-1}}(f \ast \nu) \|_{L_p(m_{XAS})} \leq \epsilon \|\nu\| \|f\|_{L_p(m_{XAS})} \text{ for all }s,t \in P \in \mathcal{P}.
\end{equation*}
\end{corollary}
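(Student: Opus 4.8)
The plan is to deduce this from the Croot--Sisask-type sampling lemma (Lemma \ref{lem.cas}), with a change of probability space that converts translation-averaging into an expectation, exactly as in the original Croot--Sisask argument but tracking the signed measure $\nu$ and the covering formulation. First I would set up the right random variables: writing $g := \frac{d\nu}{dm_A}$, for $s \in S$ consider the function on $A \times S$ (or rather indexed by $y \in A$) given by $y \mapsto \rho_{(ay)^{-1}}(\text{something})$ — more precisely I want to realise $\rho_{s^{-1}}(f\ast\nu)$ as an average $\int \rho_{(sy)^{-1}}(f) \, g(y)\,dm_A(y)$ so that Lemma \ref{lem.cas} applies with $\mu = m_{XAS}$, with $\Omega = A$, and with $\nu$ playing the role of the signed measure there. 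The hypothesis $\|g\|_{L_2(m_A)}^2 \le E\|g\|_{L_1(m_A)}$ is what lets me bound $\|g\|_{L_p(|\nu|\times\mu)}$ (really the relevant $L_p$-in-$\omega$, $L_p$-in-$\mu$ mixed norm) in terms of $\|\nu\| = \|g\|_{L_1(m_A)}$ up to a factor polynomial in $E$: one interpolates between $L_1$ and $L_2$ control on $g$, or simply uses that the measure $|\nu|$ has density bounded appropriately, picking up the $E$ inside the logarithm. The factor $\frac{|XAS|}{|XA\cdot\text{translate}|}$-type losses from moving $\rho_{s^{-1}}(f)$ around are controlled by the covering hypothesis $D|A| \ge |AS|$ together with the isometry/near-isometry facts for $\rho$ on $L_p(m_{XAS})$ recorded in \S\ref{sec.not}; this is where the $D$ enters the bound.

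Next I would run Lemma \ref{lem.cas} with parameter $\epsilon$ replaced by a suitable constant multiple of $\epsilon$: it produces, for each $s \in S$, a set of "good" $r$-tuples $\omega \in A^r$ (with $r = O(\epsilon^{-2}p)$) of density at least $\tfrac12$ for which $\rho_{s^{-1}}(f\ast\nu)$ is approximated in $L_p(m_{XAS})$ by the empirical average $\frac1r\sum_i h(\omega_i)\rho_{(s\omega_i)^{-1}}(f)$ — wait, I need the approximant to depend on $s$ only through a bounded-size amount of data. The standard trick: the empirical average $\frac1r\sum_i h(\omega_i)\rho_{(s\omega_i)^{-1}}(f)$ depends on $s$ and the tuple $\omega = (\omega_1,\dots,\omega_r)$; declare $s \sim t$ if the same tuple $\omega \in A^r$ is good for both. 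Since for each fixed $\omega$ the map $s \mapsto$ (that empirical average) has small "diameter" only after we further quantise, I instead argue: fix a tuple $\omega$; the set of $s$ for which $\omega$ is good has, by a pigeonhole/double-counting over $s \in S$ and tuples, the property that some single $\omega$ is good for a $2^{-O(r)}$-proportion... this is the step I must be careful with. The clean route (Croot--Sisask's) is: the number of $r$-tuples is $|A|^r$, and for a random $\omega$ the good set $S_\omega := \{s : \omega \text{ good for } s\}$ satisfies $\mathbb{E}_\omega |S_\omega| \ge \tfrac12 |S|$; partition $S$ by the value of $\omega \mapsto (\rho_{(s\omega_1)^{-1}}(f),\dots)$ — no. Let me instead adopt: define $P$-classes by quantising each $\omega_i \in A$ is overkill; the actual mechanism is that two $s,t$ sharing a common good tuple $\omega$ automatically satisfy the conclusion by the triangle inequality, and a greedy/maximal argument shows $S$ is covered by at most $\exp(O(\epsilon^{-2}p\log(2DE\epsilon^{-1})))$ such classes because each class, being the good-set of a tuple, is "large" in a suitable $L_p$-approximation-counting sense; the $\log$ of the tuple count $|A|^r$ is absorbed into $r \cdot \log|A|$, but since we only need the number of \emph{classes}, a dimension/packing bound on the set of empirical averages (which live in a space of "effective dimension" $O(\epsilon^{-2}p)$ by the sampling lemma itself) gives the stated $\exp(O(\epsilon^{-2}p\log 2DE\epsilon^{-1}))$ cover size.

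The main obstacle, then, is precisely this last combinatorial step: converting the probabilistic "for each $s$, many good tuples" statement into a genuine cover of $S$ of controlled size with the two-point conclusion holding uniformly within each part. The way I would actually close it is the clean Croot--Sisask packing argument: let $\mathcal{P}$ be a maximal collection of points of $S$ that are pairwise \emph{not} within the target distance; for any two such points no common tuple can be good for both (else the triangle inequality would contradict maximality-as-separation), so the good-sets $\{S_\omega\}$ restricted to $\mathcal{P}$ are, in an averaged sense, almost disjoint while each has density $\ge \tfrac12$ on average — forcing $|\mathcal{P}|$ to be bounded by the reciprocal of that density after accounting for the $\log|A|^r$ entropy, which is exactly $r\log|A|$ but gets replaced by $r\log(2DE/\epsilon)$ once one works with a net of the $L_p(m_{XAS})$-ball of approximants rather than with raw tuples. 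Then the parts of $\mathcal{P}$ (Voronoi-style, assigning each $s\in S$ to a nearest element of $\mathcal{P}$) form the required cover, the within-part bound following by the triangle inequality through the common good tuple and a final rescaling of $\epsilon$. Everything else — the reduction to Lemma \ref{lem.cas}, the $E$- and $D$-bookkeeping, the $\rho$-near-isometry estimates — is routine given the tools in \S\ref{sec.not} and \S\ref{sec.tools}.
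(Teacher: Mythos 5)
The reduction to Lemma \ref{lem.cas} and the bookkeeping of where $D$ and $E$ enter are essentially right (in the paper $E$ enters via Cauchy--Schwarz, converting the $|\nu|^r$-density of good tuples into an $m_A^r$-density at cost $E^{-2r}$, rather than via interpolation on $g$, but that is cosmetic). The genuine gap is the combinatorial step you yourself flag: converting ``each $s$ has many good tuples'' into a cover of $S$ of size independent of $|G|$. Your proposed mechanism --- ``two $s,t$ sharing a common good tuple $\omega$ automatically satisfy the conclusion by the triangle inequality'' --- is false: the approximant for $s$ is $\frac1r\sum_i h(\omega_i)\rho_{(\omega_i s)^{-1}}(f)$ and for $t$ it is $\frac1r\sum_i h(\omega_i)\rho_{(\omega_i t)^{-1}}(f)$, and these are different functions with no reason to be close, so a maximal-separation/packing argument on $S$ using overlap of good-tuple sets does not close. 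Your fallback, a metric-entropy bound of $\exp(O(r\log 2DE\epsilon^{-1}))$ on the set of empirical approximants in $L_p(m_{XAS})$, is asserted rather than proved, and without a further idea the only available entropy bound is the raw tuple count $|A|^r$, which reintroduces the $\log|A|$ dependence you are trying to avoid.

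The paper closes this step differently. One requires not a common tuple but a matched pair of good tuples: if $x,y$ lie in the good set and $x_is=y_it$ for all $i$, then the two approximants are built from the same translates $\rho_{(x_is)^{-1}}(f)=\rho_{(y_it)^{-1}}(f)$, and they nearly coincide provided the weights match; since $\nu$ is complex one must first refine the good set $\mathcal{L}$ to a subset $\mathcal{M}$ on which the values $h(x_i)$ are quantised (a partition of $(S^1)^r$ into $\epsilon^{-O(r)}$ cells), a step absent from your sketch. Then the diagonal map $\Delta:G\to G^r$ gives $|\mathcal{M}\Delta(S)|\leq|AS|^r\leq D^r|A|^r\leq 2D^rE^{2r}\epsilon^{-O(r)}|\mathcal{M}|$, and Ruzsa's covering lemma (Lemma \ref{lem.rcl}) applied in $G^r$ produces $T\subset S$ with $|T|\leq 2D^rE^{2r}\epsilon^{-O(r)}$ such that the sets $\{s:\Delta(s)\in\mathcal{M}^{-1}\mathcal{M}\Delta(t)\}$, $t\in T$, cover $S$; membership in such a set supplies exactly the matched tuples needed to run the triangle inequality through the common anchor $t$. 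A disjointness-of-translates packing argument for the sets $\mathcal{M}\Delta(s)$ inside $(AS)^r$ could be made to work as a substitute for Ruzsa covering, but it is this shifted-tuple comparison in $G^r$ --- not overlap of good sets over $S$, nor a net of approximants --- that makes the count finite and of the stated size, and it is missing from your argument.
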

\begin{proof}
Put $g(x,y):=\rho_{x^{-1}}(f)(y)$ for $|\nu|\times m_{XA}$-a.e. $(x,y) \in G^2$ and note that
\begin{equation*}
\|g\|_{L_p(|\nu| \times m_{XA})}^p = \int{\int{|\rho_{x^{-1}}(f)(y)|^pdm_{XA}(y)}d|\nu|(x)}= \|\nu\|\|f \|_{L_p(m_{XA})}^p
\end{equation*}
since $\int{g_xd\nu(x)} = f \ast \nu$.  Apply Lemma \ref{lem.cas} with $m_{XA}$ on $G$, the complex-valued measure $\nu$ on $G$, the function $g$, and parameter $\frac{1}{6}\epsilon$ to get a function $h:G \rightarrow \C$ with $|h(x)| = \|\nu\|$ for all $x \in G$ such that the set
\begin{equation*}
\mathcal{L}:=\left\{x \in G^r: \left\| f \ast \nu - \frac{1}{r}\left(\sum_{i=1}^r{h(x_i)\rho_{x_i^{-1}}(f)}\right)\right\|_{L_p(m_{XA})} \leq \frac{1}{6}\epsilon \|\nu\| \|f\|_{L_p(m_{XA})}\right\}
\end{equation*}
has $|\nu|^r(\mathcal{L}) \geq \frac{1}{2}\|\nu\|^r$.  By the Cauchy-Schwarz inequality (recalling our rescaling) we have
\begin{align*}
m_A^r(\mathcal{L})& = \left\|\frac{d\nu}{dm_A}\right\|_{L_2(m_A)}^{-2r}m_A^r(\mathcal{L})\left(\int{\prod_{i=1}^r{\left|\frac{d\nu}{dm_A}(a_i)\right|^{2}}dm_A^r(a)}\right)\\ & \geq \left\|\frac{d\nu}{dm_A}\right\|_{L_2(m_A)}^{-2r}\left|\int{1_\mathcal{L}(a)\prod_{i=1}^r{\left|\frac{d\nu}{dm_A}(a_i)\right|}dm_A^r(a)}\right|^2 =\left\|\frac{d\nu}{dm_A}\right\|_{L_2(m_A)}^{-2r} |\nu|^r(\mathcal{L}) \geq \frac{1}{2}E^{-2r}.
\end{align*}
We may certainly assume that $\mathcal{L} \subset \supp |\nu|^r \subset A^r$ and so it follows (since $\supp f \ast \nu \subset XA$) that for all $s \in S$ and $x \in \mathcal{L}$ we have
\begin{equation*}
\left\| \rho_{s^{-1}}( f \ast \nu) - \frac{1}{r}\left(\sum_{i=1}^r{h(x_i)\rho_{(x_is)^{-1}}(f)}\right)\right\|_{L_p(m_{XAS})} \leq \frac{1}{6}\epsilon \|\nu\| \|f\|_{L_p(m_{XAS})}.
\end{equation*}
Let $\mathcal{Q}$ be a partition of $(S^1)^r$ of size $\epsilon^{-O(r)}$ such that $\|z-z'\|_{\ell_\infty^r} \leq \frac{1}{6}\epsilon$ for all $z,z' \in Q \in \mathcal{Q}$.  Pulling back this partition along the map $\mathcal{L} \rightarrow (S^1)^r; x \mapsto (\|\nu\|^{-1}h(x_1),\dots,\|\nu\|^{-1}h(x_r))$ it follows that there is some $\mathcal{M} \subset \mathcal{L}$ with
\begin{equation*}
m_A^r(\mathcal{M}) \geq \frac{1}{|\mathcal{Q}|} m_A^r(\mathcal{L}) \text{ s.t. } |h(x_i)-h(x_i')| \leq \frac{1}{6}\epsilon \|\nu\|\text{ for all }x,x' \in \mathcal{M}, 1 \leq i \leq r.
\end{equation*}
In particular, if $s,t \in S$ and $x,y\in \mathcal{M}$ are such that $x_is=y_it$ for all $1\leq i \leq r$ then
\begin{align*}
& \left\| \frac{1}{r}\left(\sum_{i=1}^r{h(x_i)\rho_{(x_is)^{-1}}(f)}\right) -  \frac{1}{r}\left(\sum_{i=1}^r{h(y_i)\rho_{(y_it)^{-1}}(f)}\right)\right\|_{L_p(m_{XAS})}\\ & \qquad \qquad \qquad \leq \frac{1}{6}\epsilon \|\nu\|\sup{\left\{\left\|\rho_{(as')^{-1}}(f)\right\|_{L_p(m_{XAS})} : s' \in S, a \in A\right\}} \leq \frac{1}{6}\epsilon \|\nu\|\|f\|_{L_p(m_{XAS})}
\end{align*}
by the triangle inequality.  Write $\Delta:G \rightarrow G^r; t \mapsto (t,\dots,t)$.  Then
\begin{equation*}
|\mathcal{M}\Delta(S)| \leq |\mathcal{M}(S\times \cdots \times S)| \leq |AS|^r \leq D^r|A|^r \leq 2D^rE^{2r}\epsilon^{-O(r)}|\mathcal{M}|.
\end{equation*}
By Ruzsa's covering lemma (Lemma \ref{lem.rcl}) we see that there is a set $T\subset S$ of size at most $2D^rE^{2r}\epsilon^{-O(r)}$ such that $\{
\mathcal{M}^{-1}\mathcal{M}\Delta(t) : t \in T\}$ covers $\Delta(S)$; let
\begin{equation*}
\mathcal{P}:=\{\{s: \Delta(s) \in \mathcal{M}^{-1}\mathcal{M}\Delta(t)\}: t \in T\}
\end{equation*}
so that $\mathcal{P}$ is a cover of $S$ of the claimed size.  Finally, if $s_0,s_1 \in P \in \mathcal{P}$ then there is some $t \in T$ and elements
elements $x^{(0)},x^{(1)},y^{(0)},y^{(1)} \in \mathcal{M}$ such that
\begin{equation*}
x^{(0)}_is_0 = y^{(0)}_it \text{ and } x^{(1)}_is_1 = y^{(1)}_it \text{ for all }1 \leq i \leq r.
\end{equation*}
For $j \in \{0,1\}$ we then have
\begin{align*}
\left\|\rho_{s_j^{-1}}( f \ast \nu) - \rho_{t^{-1}}( f \ast \nu)\right\|_{L_p(m_{XAS})}  \leq &\left\|\rho_{s_j^{-1}}( f \ast \nu) -  \frac{1}{r}\left(\sum_{i=1}^r{h(x_i^{(j)})\rho_{(x_i^{(j)}s_j)^{-1}}(f)}\right)\right\|_{L_p(m_{XAS})}\\
&+ \left\| \frac{1}{r}\left(\sum_{i=1}^r{h(x_i^{(j)})\rho_{(x_i^{(j)}s_j)^{-1}}(f)}\right)\right.\\
&\qquad\qquad\left.- \frac{1}{r}\left(\sum_{i=1}^r{h(y_i^{(j)})\rho_{(y_i^{(j)}t)^{-1}}(f)}\right)\right\|_{L_p(m_{XAS})}\\
&  + \left\| \frac{1}{r}\left(\sum_{i=1}^r{h(y_i^{(j)})\rho_{(y_i^{(j)}t)^{-1}}(f)}\right)- \rho_{t^{-1}}( f \ast \nu)\right\|_{L_p(m_{XAS})}.
\end{align*}
It follows that
\begin{equation*}
\left\|\rho_{s_j^{-1}}( f \ast \nu) - \rho_{t^{-1}}( f \ast \nu)\right\|_{L_p(m_{XAS})} \leq \frac{1}{2}\epsilon \|\nu\|\|f\|_{L_p(m_{XAS})},
\end{equation*}
and we get the conclusion by the triangle inequality again.
\end{proof}
We do \emph{not} actually need the $p$-dependence in the above result.  Although Proposition \ref{prop.inv} does use good $L_p$-control of the norm it achieves this by pigeon-holing the $A(G)$-mass on the spectral side since $L_p$ is dominated by $L_\infty$ which, in turn, is dominated by the $A(G)$ norm.  Nevertheless the proof for general $p$ is not significantly more involved and may be of use elsewhere.

\section{Invariant sets}\label{sec.is}

In this section we shall prove the following proposition.  The argument is not dissimilar to the basic scheme in \cite{grekon::} which, itself, is a sort of regularity argument.  
\begin{proposition*}[Proposition \ref{prop.inv}]
Suppose that $A \in \mathcal{N}(G)$ has $|A^2| \leq K|A|$; $f \in A(G)$ has $\|f\|_{A(G)} \leq M$; and $\epsilon,\eta \in (0,1]$ and $p \geq 2$ are parameters.  Then there are sets $X,X^+,X^-,B \in \mathcal{N}(G)$ such that $(X,B;X^+,X^-)$ is an $\eta$-closed pair with $(X^+)^4 \subset A^4$,
\begin{equation*}
|B| \geq \exp(-(\eta^{-1}MK)^{p\exp(O(\epsilon^{-2}))})|A| \text{ and } \sup_t{\|f-f \ast m_X\|_{L_p(m_{tB})}} \leq \epsilon M.
\end{equation*}
\end{proposition*}
The proof is iterative and based around the following lemma.
\begin{lemma}\label{lem.int}
Suppose that $f \in A(G)$ has $\|f\|_{A(G)} = M$; $X,B \subset G$, $B \in \mathcal{N}(G)$ have $K|X| \geq |XB|$ and $(B,T)$ $\eta$-closed; and $\epsilon \in (0,1]$ is a parameter.  Then there is a set $W \subset T$ such that
\begin{equation*}
|W| \geq \exp(-(\epsilon^{-1} MK)^{O(1)})|T|
\end{equation*}
and
\begin{equation*}
\|\rho_{u^{-1}}(f)-\rho_{t^{-1}}(f) \|_{L_2(m_{B^-})} \leq \epsilon +\eta (MK)^{O(1)} \text{ for all }u,t\in W.
\end{equation*}
\end{lemma}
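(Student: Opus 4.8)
The plan is to use the Croot--Sisask machinery (Corollary~\ref{cor.keyt}) applied to a suitable decomposition of $f$ coming from Lemma~\ref{lem.various}. First I would write $f(x) = M\,\E_\omega \wt{h_\omega}\ast g_\omega(x) = M\,\E_\omega \langle g_\omega, \rho_x(h_\omega)\rangle_{L_2(m_G)}$ using the first representation in Lemma~\ref{lem.various}, so that for any $u,t$,
\begin{equation*}
\rho_{u^{-1}}(f)(x) - \rho_{t^{-1}}(f)(x) = M\,\E_\omega\left[\langle g_\omega, \rho_{xu^{-1}}(h_\omega)\rangle - \langle g_\omega, \rho_{xt^{-1}}(h_\omega)\rangle\right].
\end{equation*}
The point is to control, for each fixed $\omega$, the quantity $\|\rho_{u^{-1}}(\wt{h_\omega}\ast g_\omega) - \rho_{t^{-1}}(\wt{h_\omega}\ast g_\omega)\|_{L_2(m_{B^-})}$ on a large common set of $u,t \in T$, and then average over $\omega$ via the triangle inequality (Minkowski's inequality in $L_2(m_{B^-})$, with the probability average over $\Omega$).

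Next I would apply Corollary~\ref{cor.keyt} for each $\omega$: roughly, with the ``$X$'' of that corollary taken to be $B^-$, the ``$A$'' taken to be $G$ (or a thickening adapted to the supports, using that $h_\omega,g_\omega$ live on $L_2(m_G)$ so there is no genuine sparsity to exploit — one really wants $A$ small, so more carefully one should first truncate $g_\omega$ to a level set to make it sparse, or instead run the Croot--Sisask argument with $S = T$ and the relevant doubling $D$ coming from $K|X|\ge|XB|$), and the ``$S$'' taken to be $T$. This produces, for each $\omega$, a cover $\mathcal{P}_\omega$ of $T$ of size $\exp(O(\epsilon^{-2}\log(2DE\epsilon^{-1})))$ on each cell of which $\rho_{s^{-1}}(\cdot) $ and $\rho_{t^{-1}}(\cdot)$ of the relevant convolution agree up to $\epsilon$ in the appropriate $L_2$ norm. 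Since $f$ is an \emph{average} over $\omega$, I cannot afford to intersect covers over all $\omega$ directly (there could be many); instead I would first discretise, replacing the average over $\Omega$ by a bounded number $N = O(\epsilon^{-2}\log(\cdots))$ of sample points $\omega_1,\dots,\omega_N$ (a standard second-moment/sampling argument, exactly as in the Croot--Sisask proof itself, or as packaged in Lemma~\ref{lem.cas}), so that $f$ is approximated in $L_2(m_G)$ (hence in $L_2(m_{B^-})$ after using the quasi-invariance bounds $\|\rho_x(\cdot)\|_{L_p(m_{B^-})}^p \le \frac{|B^-X|}{|B^-|}\|\cdot\|_{L_p(\cdots)}^p$) by a $\frac1N$-average of $N$ translates; then I take the common refinement $\mathcal{P} = \bigwedge_{j=1}^N \mathcal{P}_{\omega_j}$, whose size is at most the product, still of the form $\exp(-(\epsilon^{-1}MK)^{O(1)})|T|$-many cells by pigeonholing to find one cell $W$ of proportional size.

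The $\eta$-dependence enters through Lemma~\ref{lem.inv}: the hypothesis that $(B,T)$ is $\eta$-closed means that $\rho_x$ for $x \in T$ only perturbs $f\ast m_B$-type averages by $O(\eta\|f\|_\infty)$, and since $\|f\|_{L_\infty(G)} \le \|f\|_{A(G)} = M$ (indeed $\le MK^{O(1)}$ after the various thickenings, bounding $L_\infty$ by $A(G)$), the passage between norms on $m_{B^-}$, $m_B$ and $m_{BT}$ costs at most additive $\eta(MK)^{O(1)}$. Combining: on the cell $W$, for each sampled $\omega_j$ the translates agree up to $\epsilon$, summing the $N$ of them with weight $\frac1N$ keeps it at $\epsilon$, the sampling error is absorbed into $\epsilon$ (adjust constants), and the $\eta$-closed/quasi-invariance slippage contributes $\eta(MK)^{O(1)}$, giving the claimed bound $\epsilon + \eta(MK)^{O(1)}$. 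The size bound $|W| \ge \exp(-(\epsilon^{-1}MK)^{O(1)})|T|$ comes from: $|W| \ge |\mathcal{P}|^{-1}|T|$, $|\mathcal{P}| \le \prod_j|\mathcal{P}_{\omega_j}| \le (\exp(O(\epsilon^{-2}\log(2DE\epsilon^{-1}))))^N$, with $N = (\epsilon^{-1}\log(\cdots))^{O(1)}$, $D \le K^{O(1)}$ from the doubling hypothesis, and $E \le M^{O(1)}$ (or absolute) from the $L_2$/$L_1$ comparison of the relevant density, so the exponent is polynomial in $\epsilon^{-1}$, $M$, $\log K$, hence certainly at most $(\epsilon^{-1}MK)^{O(1)}$.

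The main obstacle I anticipate is the absence of sparsity: Corollary~\ref{cor.keyt} is most naturally applied to a sparse function $f$ on $L_p(m_X)$ with $X$ genuinely small, whereas here $g_\omega$ and $h_\omega$ are unit-norm functions on all of $G$ with no a priori concentration. The right fix — and the technical heart of the proof — is to run everything relative to the set $X$ (here playing the role of ``$A$'' in Corollary~\ref{cor.keyt}) using only the doubling $K|X| \ge |XB|$ that we are handed, so that the cover size depends on $K$ rather than on $|G|/|X|$; equivalently, one truncates $\wt{h_\omega}\ast g_\omega$ to its restriction to translates of $X$ before applying the lemma, controlling the truncation error again by $L_\infty \le A(G)$ bounds. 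Getting the bookkeeping of supports right so that every $L_p$-norm in sight is over a set of the form $m_{XBT}$ comparable to $m_{B^-}$ via the quasi-invariance inequalities, while keeping the final estimate in $L_2(m_{B^-})$ as stated, is where the care is needed.
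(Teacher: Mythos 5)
There is a genuine gap, and it sits exactly at the point your sketch treats as routine: how to pass from what Corollary~\ref{cor.keyt} actually gives you to translation-stability of $f$ itself. The corollary controls $\|\rho_{s^{-1}}(f\ast\nu)-\rho_{t^{-1}}(f\ast\nu)\|$ only for a measure $\nu$ whose density satisfies $\|d\nu/dm_A\|_{L_2(m_A)}^2\le E\|d\nu/dm_A\|_{L_1(m_A)}$, and the cover size depends on $E$. In your plan the role of $\nu$ would have to be played by (a localisation of) $g_\omega$, but the $g_\omega$ produced by Lemma~\ref{lem.various} are merely unit vectors in $L_2(m_G)$, with no $L_\infty$ or density control whatsoever; your assertion that $E\le M^{O(1)}$ ``or absolute'' is unsupported and in general false (a $g_\omega$ concentrated on a tiny set makes $E$ enormous). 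The paper's proof gets around this with a step your proposal does not contain: after localising and sampling it applies Corollary~\ref{cor.keyt} not to the pieces $\gamma_i=(h_{\omega_i,s_i}dm_{s_iX})^\sim\ast g_{\omega_i,s_i}$ themselves but with the auxiliary measure $\nu=\gamma_i\,dm_B$, which is admissible because $\|\gamma_i\|_{L_\infty(G)}\le\sqrt K$ forces $E\le\sqrt K$. This controls translates of the further-smoothed object $h_{\omega_i,s_i}\ast(\gamma_i dm_B)$, and one then transfers back to $\gamma_i$ by a duality/bilinear bootstrap: pair the difference of translates against $\rho_{x^{-1}}(g_{\omega_i,s_i})$, move the convolution across the inner product, and specialise $x=t$ and $x=u$ to deduce $\|\rho_{t^{-1}}(\gamma_i|_B)-\rho_{u^{-1}}(\gamma_i|_B)\|_{L_2(m_{BT})}$ is small, with the $\eta$-closedness of $(B,T)$ paying the $O(\eta\sqrt K)$ cost of replacing $\gamma_i$ by $\gamma_i|_B$. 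That bootstrap is the heart of the lemma (it is also where the additive $\eta(MK)^{O(1)}$ term really comes from, not from Lemma~\ref{lem.inv} as you suggest), and without it your argument never controls $\rho_{u^{-1}}(f)-\rho_{t^{-1}}(f)$ at all, since right translation acts on the $g_\omega$ factor of $\wt{h_\omega}\ast g_\omega$, which has no smoothness to exploit.

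A secondary inaccuracy: your fix for the sparsity issue (truncate $\wt{h_\omega}\ast g_\omega$ to translates of $X$ and bound the truncation error via $L_\infty\le A(G)$) is not how the localisation works and would not obviously close. The paper localises \emph{exactly}: it inserts the identity $\int\wt{1_{sX}}(xy^{-1})/m_G(sX)\,dm_G(s)=1$ to write $f$ on $B$ as an average over $s$ of convolutions of $h_\omega 1_{sX}$ with $g_\omega 1_{sXB}$, renormalised so that Cauchy--Schwarz keeps the total weight at most $M$; there is no truncation error to absorb, and it is precisely this step that makes the subsequent sampling constant $\sqrt K$ (rather than something depending on $|G|$). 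The remaining ingredients of your outline --- sampling via Lemma~\ref{lem.cas}, taking the meet of the resulting covers, and pigeonholing for a large cell $W$ --- do match the paper, but as written the proposal does not constitute a proof.
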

\begin{proof}
Since $f \in A(G)$, Lemma \ref{lem.various} tells us that there is a constant (which we may as well take to be $M$); a probability space $(\Omega,\P)$; and functions $h_\omega,g_\omega \in L_2(m_G)$ with
\begin{equation*}
\|h_\omega\|_{L_2(m_G)}\leq 1 \text{ and } \|g_{\omega}\|_{L_2(m_G)} \leq 1 \text{ for all } \omega \in \Omega,
\end{equation*}
such that
\begin{equation*}
f(x)=M\E_{\omega}{\wt{h_\omega} \ast g_\omega(x)} \text{ for all }x \in G.
\end{equation*}
For $z \in G$ we have
\begin{equation*}
\int{\frac{\wt{1_{sX}}(z)}{m_G(sX)}dm_G(s)}=\int{\frac{1_{z^{-1}X^{-1}}(s)}{m_G(X)}dm_G(s)}=1
\end{equation*}
and so if $x \in B$ then
\begin{align*}
f(x) & = M\E_{\omega}{\int{\wt{h_\omega}(xy^{-1})g_\omega(y)\frac{\wt{1_{sX}}(xy^{-1})}{m_G(sX)}dm_G(s)dm_G(y)}}\\
& =  M\E_{\omega}{\int{(h_\omega dm_{sX})^\sim(xy^{-1})(g_\omega 1_{sXB})(y)dm_G(y)dm_G(s)}}.
\end{align*}
For each $y \in G$ we put
\begin{equation*}
h_{\omega,s}(y):=\frac{1}{\|h_\omega\|_{L_2(m_{sX})}}h_\omega(y)1_{sX}(y) \text{ and } g_{\omega,s}(y):=\frac{1}{\|g_\omega\|_{L_2(m_{sXB})}}g_\omega(y)1_{sXB}(y),
\end{equation*}
and with this notation we see that (for $x \in B$)
\begin{align*}
f(x) & =  M\E_{\omega,s}{\|h_\omega\|_{L_2(m_{sX})}\|g_\omega\|_{L_2(m_{sXB})}  (h_{\omega,s}dm_{sX})^\sim \ast g_{\omega,s}(x)}.
\end{align*}
By the Cauchy-Schwarz inequality we have (for each $\omega \in \Omega$)
\begin{align*}
\E_{s}{\|h_\omega\|_{L_2(m_{sX})}\|g_\omega\|_{L_2(m_{sXB})}} &\leq \left(\E_{s}{\|h_\omega\|_{L_2(m_{sX})}^2}\right)^{1/2}\left(\E_{s}{\|g_\omega\|_{L_2(m_{sXB})}^2}\right)^{1/2}\leq 1.
\end{align*}
It follows that
\begin{equation*}
\E_{\omega,s}{\|h_\omega\|_{L_2(m_{sX})}\|g_\omega\|_{L_2(m_{sXB})}} \leq 1,
\end{equation*}
and hence there is a probability measure $\P'$ on $\Omega \times G$ and a constant $M' \leq M$ such that
\begin{equation*}
f(x)=M'\E'{ (h_{\omega,s}dm_{sX})^\sim\ast g_{\omega,s}(x)} \text{ for all }x \in B.
\end{equation*}
Note that
\begin{align*}
 \|(h_{\omega,s}dm_{sX})^\sim \ast g_{\omega,s}\|_{L_\infty(G)}  \leq \sqrt{\frac{m_G(sXB)}{m_G(sX)}}\|h_{\omega,s}\|_{L_2(m_{sX})}\|g_{\omega,s}\|_{L_2(m_{sXB})} \leq \sqrt{K},
\end{align*}
and so
\begin{align*}
\left(\E'{\|M'(h_{\omega,s}dm_{sX})^\sim \ast g_{\omega,s}\|_{L_2(m_{B})}^2}\right)^{\frac{1}{2}}  \leq M'\sqrt{K}.
\end{align*}
It follows by Lemma \ref{lem.cas} (applied to $((\omega,s),x) \mapsto M'(h_{\omega,s}dm_{sX})^\sim \ast g_{\omega,s}(x)$ with $p=2$, $\mu=m_{B}$, $\nu=\P'$ and with some parameter $\delta$ chosen later) that there is some $r=O(\delta^{-2})$, $\omega_1,\dots,\omega_r \in \Omega$ and $s_1,\dots,s_r \in G$ such that
\begin{equation*}
\left\| f - \frac{M'}{r}\left(\sum_{i=1}^r{(h_{\omega_i,s_i}dm_{s_iX})^\sim \ast g_{\omega_i,s_i} }\right)\right\|_{L_2(m_{B})} \leq \delta \sqrt{K}M'.
\end{equation*}
Write $\gamma_i:=(h_{\omega_i,s_i}dm_{sX})^\sim \ast g_{\omega_i,s_i}$ so that $\|\gamma_i\|_{L_\infty(G)} \leq \sqrt{K}$ and
\begin{equation}\label{eqn.inserty}
\left\| \rho_{t^{-1}}(f) - \frac{M'}{r}\left(\sum_{i=1}^r{\rho_{t^{-1}}(\gamma_i) }\right)\right\|_{L_2(m_{B^-})} \leq \delta \sqrt{K}M' \text{ for all }t \in T.
\end{equation}
For each $1\leq i\leq r$ apply Corollary \ref{cor.keyt} with sets $s_iX$, $B$ and $T$, measure $\nu:=\gamma_idm_B$
\begin{equation*}
\left\|\frac{d\nu}{dm_B}\right\|_{L_2(m_B)}^2 = \|\gamma_i\|_{L_2(m_B)}^2 \leq \sqrt{K}\|\gamma_i\|_{L_1(m_B)}=\sqrt{K}\left\|\frac{d\nu}{dm_B}\right\|_{L_1(m_B)},
\end{equation*}
function $h_{\omega_i,s_i} \in L_2(m_{s_iX})$, and parameters $2$ and $\delta$ to get a partition $\mathcal{P}_{i}$ of $T$ of size $\exp(O(\delta^{-2}\log 2K\delta^{-1}))$ such that for all $t,u \in P \in \mathcal{P}_{i,j}$ we have
\begin{align*}
&\|\rho_{t^{-1}}(h_{\omega_i,s_i}\ast (\gamma_idm_{B})) - \rho_{u^{-1}}(h_{\omega_j,s_j}\ast ( \gamma_idm_{B}))\|_{L_2(m_{s_iXBT})}\\
&\qquad \qquad \qquad  \qquad \qquad \leq \delta\|\gamma_i\|_{L_1(m_{B})}\| h_{\omega_i,s_i}\|_{L_2(m_{s_iXBT})} \leq \delta \sqrt{K}\| h_{\omega_i,s_i}\|_{L_2(m_{s_iXBT})}.
\end{align*}
Let $\mathcal{P}:=\bigwedge_{i=1}^r{\mathcal{P}_{i}}$ which is a partition of $T$ of size at most $\exp(O(r\delta^{-2}\log 2K\delta^{-1}))$, and suppose that $W$ is the largest cell so that we have the size bound for $W$ we require (once we have chosen $\delta$), and if $t,u \in W$ and $x \in T$ then
\begin{align*}
& |\langle \rho_{t^{-1}}(h_{\omega_i,s_i}\ast (\gamma_idm_{B}))- \rho_{u^{-1}}(h_{\omega_i,s_i}\ast ( \gamma_idm_{B})), \rho_{x^{-1}}(g_{\omega_i,s_i})\rangle_{L_2(m_{s_iXBT})}|\\ &
\qquad \qquad \qquad \qquad \qquad  \leq \delta\sqrt{K}\|\rho_{x^{-1}}(g_{\omega_i,s_i})\|_{L_2(m_{s_iXBT})}\| h_{\omega_i,s_i}\|_{L_2(m_{s_iXBT})}\\
& \qquad \qquad \qquad \qquad \qquad \qquad \qquad \qquad = \delta \sqrt{K}\|g_{\omega_i,s_i}\|_{L_2(m_{s_iXBT})}\| h_{\omega_i,s_i}\|_{L_2(m_{s_iXBT})} .
\end{align*}
On the other hand
\begin{align*}
&\langle \rho_{t^{-1}}(h_{\omega_i,s_i}\ast (\gamma_idm_{B}))- \rho_{u^{-1}}(h_{\omega_i,s_i}\ast ( \gamma_idm_{B})), \rho_{x^{-1}}(g_{\omega_i,s_i})\rangle_{L_2(m_{s_iXBT})}\\ &
 \qquad \qquad =\frac{|X|}{|XBT|}\langle (h_{\omega_i,s_i}dm_{s_iX})\ast (\rho_{t^{-1}}(\gamma_idm_{B})- \rho_{u^{-1}}( \gamma_idm_{B})), \rho_{x^{-1}}(g_{\omega_i,s_i})\rangle\\
 &
 \qquad \qquad  \qquad \qquad =\frac{|X|}{|XBT|}\langle \rho_{t^{-1}}(\gamma_idm_{B})- \rho_{u^{-1}}( \gamma_idm_{B}), \rho_{x^{-1}}((h_{\omega_i,s_i}dm_{s_iX})^\sim\ast g_{\omega_i,s_i})\rangle\\
 & \qquad \qquad  \qquad \qquad \qquad \qquad =\frac{|X|}{|XBT|}\frac{|BT|}{|B|}\langle \rho_{t^{-1}}(\gamma_i|_B)- \rho_{u^{-1}}( \gamma_i|_B), \rho_{x^{-1}}(\gamma_i)\rangle_{L_2(m_{BT})}.
\end{align*}
Combining the above we get
\begin{equation*}
|\langle \rho_{t^{-1}}(\gamma_i|_B) - \rho_{u^{-1}}(\gamma_i|_B),\rho_{x^{-1}}(\gamma_i)\rangle_{L_2(m_{BT})}|  \leq \delta.
\end{equation*}
Since $(B,T)$ is $\eta$-closed we have
\begin{equation*}
\frac{|BT|}{|B|}\cdot \|\rho_{x^{-1}}(\gamma_i)-\rho_{x^{-1}}(\gamma_i|_B)\|_{L_1(m_{BT})} \leq 2\|\gamma_i\|_{L_\infty(G)}\frac{|B^+ \setminus B^-|}{|B|} = O(\eta \sqrt{K})
\end{equation*}
and it follows that
\begin{equation*}
|\langle \rho_{t^{-1}}(\gamma_i|_B) - \rho_{u^{-1}}(\gamma_i|_B),\rho_{x^{-1}}(\gamma_i|_B)\rangle_{L_2(m_{BT})}|  \leq \delta +O(\eta \sqrt{K}).
\end{equation*}
Since $x \in T$ was arbitrary we conclude from the cases $x=u$ and $x=t$ that
\begin{align*}
\frac{|B^-|}{|B^+|}\left\|\rho_{t^{-1}}(\gamma_i) - \rho_{u^{-1}}(\gamma_i)\right\|_{L_2(m_{B^-})}& \leq \left\|\rho_{t^{-1}}(\gamma_i|_B) - \rho_{u^{-1}}(\gamma_i|_B)\right\|_{L_2(m_{BT})}  \leq 2\delta+O(\eta \sqrt{K})
\end{align*}
for all $t,u \in W$.  By the triangle inequality we then have
\begin{equation*}
\left\|\frac{M'}{r}\left(\sum_{i=1}^r{\rho_{t^{-1}}(\gamma_i) }\right)-\frac{M'}{r}\left(\sum_{i=1}^r{\rho_{u^{-1}}(\gamma_i) }\right)\right\|_{L_2(m_{B^-})} =O(M\delta + \eta M\sqrt{K}),
\end{equation*}
for all $t,u \in W$, and combining this with (\ref{eqn.inserty}) taking $\delta = \Omega(\epsilon M^{-1})$ gives the result.
\end{proof}

\begin{proof}[Proof of Proposition \ref{prop.inv}]
Let $H$ be a Hilbert space, $\pi:G \rightarrow \Aut(H)$ a homomorphism and $v,w\in H$ have $\|w\|=1$ and $\|v\| \leq M$ such that $f(x)=\langle\pi(x)v,w\rangle$ for all $x \in G$. (This is possible by Lemma \ref{lem.various}.)

We begin by fixing some auxiliary parameters.  Let $\nu$ and $n$ be parameters (we will have $n=O(\epsilon^{-2})$ and $\nu=\Omega(\epsilon^{2})$) that will be optimised by later choices. Let $\delta = \epsilon^{-O(p)} M^{O(1)}$ be such that if $\eta \leq \delta$ in Lemma \ref{lem.int} then the $\eta (MK)^{O(1)}$ error term is at most $\frac{1}{4^p}\epsilon^{p/2}$ if $K\leq 2$.

We construct $X_i,Z_{i+1},B_i,T_i \in \mathcal{N}(G)$ for $0 \leq i \leq n-1$ such that
\begin{equation*}
X_i \subset A_0:=\left\{x:m_A \ast 1_{A^2}\ast m_A(x)>\frac{1}{2}\right\};
\end{equation*}
$(X_i,Z_{i+1})$ and $(Z_{i+1},X_{i+1})$ are $\nu$-closed; $(B_i,T_i^4)$ is $\delta$-closed; $(X_i,B_i^4)$ is $\min\{\eta,\nu\}$-closed; and $|A_0| \leq K_i|T_i|$.  (We shall calculate how $K_i$ evolves later as it is a little more complicated.)  Finally, we shall ensure
\begin{equation}\label{eqn.u}
\left\| \left(\wh{m_{Z_{i+1}}}(\pi)-\wh{m_{X_{i}}}(\pi)\right)v\right\|>\frac{1}{2}\epsilon M.
\end{equation}
Applying Lemma \ref{lem.fr} to $A$ with parameter $\min\{\eta,\nu\}$ to get sets $X_0,Y_0 \in \mathcal{N}(G)$ such that $X_0 \subset A_0$; $(X_0,Y_0^4)$ is $\min\{\eta,\nu\}$-closed; and
\begin{equation*}
|Y_0| \geq \exp(-O(\eta^{-2}\nu^{-2}\log^{O(1)}K))|A|.
\end{equation*}
Now $|A_0| \leq 2K|A|$, and since $Y_0^4 \subset X_0 \subset A_0$ we conclude that
\begin{equation*}
|Y_0Y_0^{-1}| = |Y_0^2| \leq |A_0| \leq  \exp(O(\eta^{-2}\nu^{-2}\log^{O(1)}K))|Y_0|.
\end{equation*}
Apply Lemma \ref{lem.fr} again to $Y_0$ with parameter $\delta$ to get $B_0,T_0\in \mathcal{N}(G)$ such that $B_0^4 \subset Y_0^4$ (so $(X_0,B_0^4)$ is $\min\{\eta,\nu\}$-closed); $(B_0,T_0^4)$ is $\delta$-closed; and
\begin{equation*}
|T_0| \geq \exp(-(\delta^{-1}\eta^{-1}\nu^{-1}\log K)^{O(1)})|Y_0| \geq \exp(-(\delta^{-1}\eta^{-1}\nu^{-1}\log K)^{O(1)})|A_0|,
\end{equation*}
which gives our bound on $K_0$.

Suppose we are at step $i \leq n-1$ (so that $X_i$, $B_i$ and $T_i$, but \emph{not} $Z_{i+1}$, have been defined) and there is some $t \in G$ such that
\begin{equation}\label{eqn.boost}
\|f -f\ast m_{X_i}\|_{L_p(m_{tB_i^-})} > \epsilon M.
\end{equation}
Since $f(tb) = \langle \pi(t)\pi(b)v,w\rangle = \langle \pi(b)v,\pi(t)^*w\rangle$ we can replace $w$ by $\pi(t)^*w$ and assume $t=1_G$.  Apply Lemma \ref{lem.int} with sets $X_i$, and $(B_i,T_i;B_i^-T_i^3,B_iT_i)$ $\delta$-closed to get a set $W_{i} \subset T_i$ with
\begin{equation*}
|W_i| \geq \exp(-(2^p\epsilon^{-p}M)^{O(1)})|T_i|
\end{equation*}
such that,
\begin{equation*}
\|\rho_{u^{-1}}(f) - \rho_{t^{-1}}(f)\|_{L_2(m_{B_i^-T_i^3})} \leq \frac{2}{4^p}\epsilon^{p/2} \text{ for all }u,t \in W_i,
\end{equation*}
from the choice of $\delta$.  It follows that for all $r,s,t,u \in W_i$ we have
\begin{align}
\nonumber \|f - \rho_{rs^{-1}ut^{-1}}(f)\|_{L_2(m_{B_i^{-}})}& \leq \|f - \rho_{rs^{-1}}(f)\|_{L_2\left(m_{B_i^{-}}\right)}+\|\rho_{rs^{-1}}(f) - \rho_{rs^{-1}ut^{-1}}(f)\|_{L_2\left(m_{B_i^{-}}\right)}\\
\nonumber & \leq \sqrt{\frac{|B_i^-T_i|}{|B_i^-|}}\|\rho_{r^{-1}}(f) - \rho_{s^{-1}}(f)\|_{L_2\left(m_{B_i^-T_i}\right)}\\
\nonumber &\qquad \qquad \qquad  + \sqrt{\frac{|B_i^-T_i^2|}{|B_i^-|}}\|f - \rho_{ut^{-1}}(f)\|_{L_2\left(m_{B_i^-T_i^2}\right)}\\
\nonumber & \leq \sqrt{\frac{|B_i^-T_i^3|}{|B_i^-|}}\|\rho_{r^{-1}}(f) - \rho_{s^{-1}}(f)\|_{L_2\left(m_{B_i^-T_i^3}\right)}\\
\nonumber & \qquad \qquad \qquad +  \sqrt{\frac{|B_i^-T_i^3|}{|B_{i}^-T_i^2|}}\|\rho_{u^{-1}}(f) - \rho_{t^{-1}}(f)\|_{L_2\left(m_{B_i^-T_i^3}\right)}\\
\label{eqn.upit}& \leq  2^{3-2p}\epsilon^{p/2}.
\end{align}
Since $W_i^{-1} \subset T_i^{-1}=T_i$ and $T_i^2\subset B_i \subset X_i \subset A_0$ we see that
\begin{equation*}
|W_i^{-1}W_i| \leq |A_0| \leq K_i|T_i| \leq \exp((2^p\epsilon^{-p}M)^{O(1)})K_i|W_i|.
\end{equation*} 
Apply Lemma \ref{lem.fr} to $W_i^{-1}$ with parameter $\nu$ to get $Z_{i+1}\in \mathcal{N}(G)$ such that $Z_{i+1}^4 \subset W_iW_i^{-1}W_iW_i^{-1}\subset T_i^4$; $(Z_{i+1},Y_{i}^4)$ is $\nu$-closed; and
\begin{equation*}
|Y_i| \geq \exp(-(\nu^{-1}2^p\epsilon^{-p}M\log K_i)^{O(1)})|W_i|.
\end{equation*}
In particular, since $Y_i^2\subset Z_{i+1}^2\subset T_i^4 \subset B_i\subset X_i \subset A_0$ we have
\begin{equation}\label{eqn.ydoub}
|Y_i^2| \leq |A_0| \leq K_i|T_i| \leq  \exp(-(\nu^{-1}2^p\epsilon^{-p}M\log K_i)^{O(1)})|Y_i|.
\end{equation} 
It follows from the triangle inequality applied to (\ref{eqn.upit}) that
\begin{equation*}
\|f - f\ast m_{Z_{i+1}}\|_{L_p(m_{B_i^{-}})}^p \leq (2M)^{p-2}\|f - f\ast m_{Z_{i+1}}\|_{L_2(m_{B_i^{-}})}^2 \leq \left(\frac{1}{2}\epsilon M\right)^p.
\end{equation*}
By the triangle inequality again and (\ref{eqn.boost}) (recalling that we have argued we may take $t=1_G$) we have
\begin{equation*}
\| f\ast m_{Z_{i+1}}-f \ast m_{X_i}\|_{A(G)}\geq \| f\ast m_{Z_{i+1}}-f \ast m_{X_i}\|_{L_p(m_{B_i^-})} >\frac{1}{2}\epsilon M,
\end{equation*}
and it follows from this that (\ref{eqn.u}) holds.

Apply Lemma \ref{lem.fr} to $Y_i$ (using the bound in (\ref{eqn.ydoub})) with parameter $\min\{\nu,\eta\}$ to get $X_{i+1},U_{i+1} \in \mathcal{N}(G)$, such that $(X_{i+1},U_{i+1}^4)$ is $\min\{\nu,\eta\}$-closed; $(X_{i+1}^+)^4 \subset Y_{i}^4$; and 
\begin{equation*}
|U_{i+1}| \geq \exp(-(\eta^{-1}\nu^{-1}2^p\epsilon^{-p}M\log K_i)^{O(1)})|Y_i|.
\end{equation*}
Moreover, $X_{i+1}\subset Y_i^4 \subset Z_{i+1}\subset T_i^4\subset B_i\subset X_i \subset A_0$ and $U_{i+1}^2 \subset X_{i+1}$ so
\begin{equation*}
|U_{i+1}^2| \leq |A_0| \leq \exp((\eta^{-1}\nu^{-1}2^p\epsilon^{-p}M\log K_i)^{O(1)})|U_{i+1}|.
\end{equation*}
Finally, apply Lemma \ref{lem.fr} to $U_{i+1}$ with parameter $\delta$ to get $B_{i+1},T_{i+1}\in \mathcal{N}(G)$ such that $(B_{i+1},T_{i+1}^4)$ is $\delta$-closed; $(B_{i+1}^+)^4 \subset U_{i+1}^4$ and
\begin{equation*}
|T_{i+1}| \geq \exp(-(\delta^{-1}\eta^{-1}\nu^{-1}2^p\epsilon^{-p}M\log K_i)^{O(1)})|U_{i+1}|.
\end{equation*}
Since $B_{i+1}^4 \subset U_{i+1}^4$ and $(X_{i+1},U_{i+1}^4)$ is $\min\{\nu,\eta\}$-closed we conclude that $(X_{i+1},B_{i+1}^4)$ is $\min\{\nu,\eta\}$-closed.  Since $Z_{i+1}^4 \subset T_i^4\subset B_i^4$ we conclude that $(X_i,Z_{i+1})$ is $\nu$-closed, and since $X_{i+1}\subset Y_i^4$ and $(Z_{i+1},Y_i^4)$ is $\nu$-closed we conclude that $(Z_{i+1},X_{i+1})$ is $\nu$-closed as required.

We repeat this provided $i \leq n-1$ and (\ref{eqn.boost}) does not happen for any $t\in G$, and we shall see for a suitable choice of $n$ that that leads to a contradiction.

Since $(X_i,Z_{i+1})$ and $(Z_{i+1},X_{i+1})$ are $\nu$-closed for $0 \leq i \leq n-1$, if $j>i$ then $Z_j \subset X_j \subset X_{i+1}$ and and so
\begin{equation*}
\|m_{Z_{i+1}} \ast m_{X_j} - m_{Z_{i+1}}\|,\|m_{Z_{i+1}} \ast m_{Z_{j+1}} - m_{Z_{i+1}}\| \leq \nu,
\end{equation*}
and 
\begin{equation*}
\|m_{X_i} \ast m_{Z_{j+1}} - m_{X_i}\|,\|m_{X_i} \ast m_{X_j} - m_{X_i}\| \leq \nu.
\end{equation*}
Dealing with $i>j$ similarly we have
\begin{equation*}
\|\left(\wh{m_{Z_{i+1}}}(\pi)-\wh{m_{X_{i}}}(\pi)\right)\left(\wh{m_{Z_{j+1}}}(\pi)-\wh{m_{X_{j}}}(\pi)\right)\| \leq \begin{cases} 4 & \text{ if }i=j\\ 4\nu & \text{ if } i \neq j\end{cases}.
\end{equation*}
It follows from the (finite version of the) Cotlar-Stein lemma that for any signs $\sigma_1,\dots,\sigma_n \in \{-1,1\}$ we have
\begin{equation*}
\left\|\sum_{i=0}^{n-1}{\sigma_i\left(\wh{m_{Z_{i+1}}}(\pi)-\wh{m_{X_{i}}}(\pi)\right)}\right\| \leq O(1+\nu n).
\end{equation*}
We conclude (from (\ref{eqn.u})) that
\begin{align*}
n\left(\frac{1}{2}\epsilon M\right)^2 &\leq \sum_{i=0}^{n-1}{\left\|\left(\wh{m_{Z_{i+1}}}(\pi)-\wh{m_{X_{i}}}(\pi)\right)v\right\|^2}\\
& = \E_\sigma{\left\|\sum_{i=0}^{n-1}{\sigma_i\left(\wh{m_{Z_{i+1}}}(\pi)-\wh{m_{X_{i}}}(\pi)\right)v}\right\|^2} \leq O(1+\nu n)^2\|v\|^2,
\end{align*}
where the $\sigma_0,\dots,\sigma_{n-1} \in \{-1,1\}$ are chosen independently and uniformly.  It follows that there is some $\nu = \Omega(\epsilon^2)$ and $n=O(\epsilon^{-2})$ leading to a contradiction as claimed.

It remains to note
\begin{equation*}
\log K_{i+1} \leq (\eta^{-1} \delta^{-1}\nu^{-1}2^p\epsilon^{-p}M\log K_i)^{O(1)}.
\end{equation*}
Since the iteration proceeds at most $i_0=O(\epsilon^{-2})$ times this gives us a bound on $B_{i_0}$ when the iteration terminates (since $|B_{i_0}| \geq |T_{i_0}|$).  We set $X:=X_{i_0}$ and $B:=B_{i_0}$ which have the required properties.
\end{proof}

\section{Arithmetic connectivity}\label{sec.ac}

If $f$ is Boolean and has $\|f\|_{A(G)} \leq M$ then $\log$-convexity of $L_p$-norms can be used to show that $E(f) \geq M^{-2}|\supp f|^3$.  However, when $f$ is only almost integer-valued then it may be that most of the mass of $E(f)$ may be supported outside $\supp f_\Z$.  To deal with this in the abelian setting Green introduced the concept of arithmetic connectivity in \cite[Definition 5.2]{gresan::}.  In this section we develop this in the setting of general groups and prove the following proposition.
\begin{proposition*}[Proposition \ref{prop.infstruct}]
There is an absolute $C>0$ such that if $f$ is $\epsilon$-almost integer-valued with $\|f\|_{A(G)} \leq M$ and $\epsilon \leq \exp(-CM)$, then there is some $S \subset \supp f_\Z$ such that $|SS^{-1}| \leq M^{O(M)}|S|$ and $|S| = M^{O(M)}|\supp f_\Z|$.
\end{proposition*}
Given $k,t \in \N$ we are interested in the vectors in $[k]^{t}=\{1,\dots,k\}^t$.  We say that $i \in [k]^{t}$ is \textbf{trivial} if
\begin{equation*}
|\{j \in [k]: |\{s \in [t]: i_s=j\}|=1\}| \leq 1;
\end{equation*}
we write $T_{k,t}$ for the set of trivial elements of $[k]^{t}$, and $N_{k,t}$ for the set of non-trivial elements, that is $N_{k,r}:=[k]^{t}\setminus T_{k,t}$.  In words we say that $i \in [k]^t$ is trivial if at most one coordinate is unique.

Given a group $G$ we say that $A \subset G$ is \textbf{$(k,l)$-arithmetically connected} if for every $x \in A^k$ there is some $1 \leq r \leq l$, some $i \in N_{k,2r+1}$, and some $\sigma \in \{-1,1\}^{2r+1}$ such that
\begin{equation*}
x_{i_1}^{\sigma_1}\cdots x_{i_{2r+1}}^{\sigma_{2r+1}} \in A.
\end{equation*}

It is useful to know roughly how many trivial pairs there are and we capture this in the following lemma.
\begin{lemma}\label{lem.ct}
Given $k,r \in \N$ we have
\begin{equation*}
|T_{k,2r+1}| \leq \exp(O(r))r^{r}k^{r+1}.
\end{equation*}
\end{lemma}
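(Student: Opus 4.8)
The plan is to reduce the count to a crude bound on the number of vectors using few distinct values; the one genuine observation is that triviality caps the number of distinct entries of $i$ at $r+1$.

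First I would record this observation. Suppose $i \in [k]^{2r+1}$ is trivial, and let $v_1,\dots,v_m$ be its distinct entries, occurring with multiplicities $a_1,\dots,a_m \ge 1$ with $\sum_{j} a_j = 2r+1$. Triviality says at most one $a_j$ equals $1$, and every other $a_j$ is at least $2$. In the extremal case exactly one is a singleton, say $a_m=1$, and then $2r+1 = \sum_j a_j \ge 2(m-1)+1$, so $m \le r+1$; if there is no singleton at all then $2r+1 \ge 2m$ forces $m \le r$. Either way a trivial vector takes at most $r+1$ distinct values.

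Next I would count. The number of $i \in [k]^{2r+1}$ whose set of entries has size at most $r+1$ is, by a union bound over the (at most $\binom{k}{r+1}$) possible value-sets --- the case $k < r+1$ being subsumed by the even smaller trivial bound $k^{2r+1}$ --- at most
\[
|T_{k,2r+1}| \le \binom{k}{r+1}(r+1)^{2r+1} \le \frac{k^{r+1}}{(r+1)!}(r+1)^{2r+1}.
\]
Finally, the elementary Stirling estimate $(r+1)! \ge ((r+1)/e)^{r+1}$ together with $(r+1)^r \le e\,r^r$ gives
\[
|T_{k,2r+1}| \le e^{r+1}(r+1)^{r}k^{r+1} \le e^{r+2} r^{r} k^{r+1} = \exp(O(r))\, r^{r} k^{r+1},
\]
which is the claimed bound.

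I do not expect any real obstacle here: once one notices that a trivial vector uses at most $r+1$ distinct values, the rest is a one-line counting estimate, and the oddness of the exponent $2r+1$ enters only through that piece of bookkeeping. The only points requiring a modicum of care are trivial edge cases (for instance $r=0$, an image of size strictly below $r+1$, or $k$ smaller than $r+1$), all of which only make the displayed inequality more comfortable.
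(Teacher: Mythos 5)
Your proof is correct, and it takes a more elementary route than the paper's. Your key observation -- that triviality forces at most $r+1$ distinct entries, since every value except possibly one singleton occurs with multiplicity at least $2$ -- is sound, and the rest is a clean union bound over value-sets of size $r+1$ giving $\binom{k}{r+1}(r+1)^{2r+1}$, finished with Stirling; the edge cases ($k\le r$ handled by the trivial bound $k^{2r+1}\le r^rk^{r+1}$, and the no-singleton case giving $m\le r$) are dealt with correctly. The paper exploits the same structural fact but packages it differently: it first removes the (at most one) uniquely-occurring coordinate, which yields a surjection $[2r+1]\times[k]\times R_{k,r}\to T_{k,2r+1}$, where $R_{k,r}$ is the set of length-$2r$ vectors in which no value occurs exactly once, and then bounds $|R_{k,r}|$ by a moment computation: $|R_{k,r}|=\E\bigl(\sum_{j}X_j\bigr)^{2r}$ for auxiliary independent random variables with $\E{X_j}=0$ and $\E{X_j^n}=1$ for $n\ge 2$, estimated through the exponential moment with the optimisation $\eta^2k=r$. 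Both arguments land on the same bound $\exp(O(r))r^rk^{r+1}$, which is all that is needed downstream in Lemma \ref{lem.st}. What your version buys is simplicity and self-containedness: no auxiliary random variables (whose existence and moment inequality the paper leaves implicit) and no generating-function step, just binomial coefficients and $(r+1)!\ge((r+1)/e)^{r+1}$. What the paper's version buys is a more generic device -- the moment method counts ``no value occurs exactly once'' configurations automatically and would adapt readily to weighted or otherwise modified counts -- but for this lemma the two approaches are equally effective.
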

\begin{proof}
Writing
\begin{equation*}
R_{k,r}:=\{i:[2r] \rightarrow [k] \text{ s.t. } |i^{-1}(\{j\})|>1 \text{ for all }j \in [k]\},
\end{equation*}
we see that there is a natural surjection
\begin{equation*}
[2r+1] \times [k] \times R_{k,r} \rightarrow T_{k,2r+1}
\end{equation*}
so that $|T_{k,2r+1}| \leq (2r+1)k|R_{k,r}|$.  The result will follow from an upper bound on this last quantity.

Let $X_1,\dots,X_k$ be independent random variables with $\E{X_j}=0$ and $\E{X_j^n}=1$ for all $n \in \{2,3,\dots\}$ and $j \in \{1,\dots,k\}$.  Then for any parameter $\eta\in (0,1]$ (to be optimised later) we have
\begin{align*}
|R_{r,k}| = \E{\left(\sum_{j=1}^k{X_j}\right)^{2r}} &\leq (2r)!\eta^{-2r}\E{\exp\left(\sum_{j=1}^k{\eta X_j}\right)}\\ & = (2r)!\eta^{-2r}\left(\E{\exp\left(\eta X_1\right)}\right)^k = (2r)!\eta^{-2r}\exp(O(\eta^2k)). 
\end{align*}
Optimising with $\eta^2k =r$ we end up with $|R_{r,k}| \leq \exp(O(r))r^rk^r$ from which the result follows.
\end{proof}

The proof of the next result is inspired by \cite[Lemma 1]{mel::} of M{\'e}la and uses Chebychev polynomials, the basic properties of which may be found in \cite[\S6.10.6]{zwikraros::}.

Keeping the second parameter small is most important with the below but if the first parameter is a concern then it may be useful to know that the auxiliary measures in \cite[Lemme 4]{mel::} can be used to show that $\supp f_\Z$ is $(O(M^2\log M),O(M\log M))$-arithmetically connected.
\begin{lemma}\label{lem.st}
There is an absolute $C>0$ such that if $f$ is $\epsilon$-almost integer-valued with $\|f\|_{A(G)} \leq M$ and $\epsilon \leq \exp(-CM)$, then $\supp f_\Z$ is $(O(M^3),O(M))$-arithmetically connected.
\end{lemma}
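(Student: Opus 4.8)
The goal is to show that if $f$ is $\epsilon$-almost integer-valued with $\|f\|_{A(G)}\le M$ and $\epsilon$ is exponentially small in $M$, then $\supp f_\Z$ is $(k,l)$-arithmetically connected with $k=O(M^3)$ and $l=O(M)$. Unpacking the definition, we must show: for every $x\in(\supp f_\Z)^k$ there exist $r\le l$, a non-trivial $i\in N_{k,2r+1}$ and signs $\sigma$ with $x_{i_1}^{\sigma_1}\cdots x_{i_{2r+1}}^{\sigma_{2r+1}}\in\supp f_\Z$. I would argue by contradiction: fix $x=(x_1,\dots,x_k)\in(\supp f_\Z)^k$ for which \emph{no} such non-trivial word lands in $\supp f_\Z$, and derive a lower bound on $\|f\|_{A(G)}$ that beats $M$ once $k$ and $l$ are chosen appropriately.

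\textbf{Key steps.}
First I would set up the test functional. Using the representation from Lemma~\ref{lem.various} (or directly the trace-norm definition), the natural pairing to exploit is something like $\langle f, \prod \text{(something built from the }x_i)\rangle$; more precisely, following M\'ela's idea, I would form the measure that is a product/convolution of the $\delta_{x_j}$ together with their inverses, weighted so as to pick out short words. The Chebyshev polynomial enters here: if $P_d$ is the degree-$d$ Chebyshev polynomial, then applying $P_d$ to a suitable average of $\frac12(\delta_{x_j}+\delta_{x_j^{-1}})$-type operators (of operator norm $\le 1$) produces a measure supported on words $x_{i_1}^{\sigma_1}\cdots x_{i_{m}}^{\sigma_{m}}$ with $m\le d$ and controlled $\ell_1$-mass, while the Chebyshev extremal property forces a large coefficient on the ``balanced'' words. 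Concretely, choosing $d=O(M)$ (so $l=O(M)$ comes out) one arranges that $\langle f,\mu\rangle$, for the appropriate $\mu$, is within $O(\epsilon\cdot\text{(mass of }\mu))$ of a sum $\sum_{\text{words }w} c_w f_\Z(w)$ over short words $w$, and the Chebyshev magic guarantees the total weight of words $w$ lying in $\supp f_\Z$ that are \emph{trivial} is outweighed by the weight on non-trivial words — but by our contradiction hypothesis $f_\Z$ vanishes on all non-trivial short words $w=x_{i_1}^{\sigma_1}\cdots$, so those contribute nothing. The trivial words are those where at most one index appears exactly once; these collapse (using $x_j x_j^{-1}=1$ and the ambient group relations only in the trivial sense) to words in at most... roughly $r$ distinct letters among $2r+1$ slots, and Lemma~\ref{lem.ct} bounds their count by $\exp(O(r))r^r k^{r+1}$. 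Balancing this count against the Chebyshev-generated main-term weight, which scales like $k^{r+1/2}$ or so after normalisation, forces $k=O(M^3)$ (the cube coming from the $M$ in $d$, the $M$ from $\epsilon\le\exp(-CM)$ absorbing error terms, and one more $M$ from the normalisation mismatch between $k^{r+1}$ trivial words and the $k^{2r+1}$ total). Then $|\langle f,\mu\rangle|$ is forced to be $>M\ge\|f\|_{A(G)}$ while simultaneously $|\langle f,\mu\rangle|\le\|f\|_{A(G)}\|\mu\|\le M$ (after normalising $\|\mu\|\le 1$), a contradiction.

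\textbf{Main obstacle.}
The delicate point is the non-abelian bookkeeping of ``trivial'' words. In the abelian case of Green one can freely reorder and cancel $x_j,x_j^{-1}$ pairs, so a word in which every letter appears at least twice simplifies to a sum of shorter expressions; in a general group one cannot reorder, and $x_{i_1}^{\sigma_1}\cdots x_{i_{2r+1}}^{\sigma_{2r+1}}$ with repeated indices need \emph{not} simplify to anything in $A$ or even shorten. The fix — and the reason the definition of trivial uses ``at most one coordinate is unique'' rather than ``every coordinate repeats'' — is that one only needs: if at most one index is unique then the word cannot be forced into $\supp f_\Z$ for free, and the \emph{count} of such index patterns is small (Lemma~\ref{lem.ct}); the Chebyshev construction is then arranged so that its support, after expansion, is literally indexed by tuples $i\in[k]^{2r+1}$ with signs, and the main-term estimate only needs the crude count $|[k]^{2r+1}|$ versus $|T_{k,2r+1}|$. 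So the real work is (i) verifying the Chebyshev operator $P_d$ applied to the relevant average has operator norm $\le 1$ and expands into exactly the claimed word-sum with the right lower bound on non-trivial-word weight, and (ii) controlling the error from $f$ being only $\epsilon$-almost integer — here $\epsilon\le\exp(-CM)$ with $C$ large enough to swallow the $\exp(O(r))=\exp(O(M))$ and the word-count factors is what we need, and this is where the constant $C$ in the statement is pinned down. I expect step (i), getting the Chebyshev extremal estimate to interact correctly with the trivial/non-trivial dichotomy in the non-commutative setting, to be the crux.
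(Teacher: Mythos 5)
Your overall route is the same as the paper's (this is the M\'ela-style argument): assume $\supp f_\Z$ is not $(k,l)$-arithmetically connected, take the bad tuple $x\in(\supp f_\Z)^k$, apply the degree-$(2l+1)$ Chebyshev polynomial to a normalised hermitian combination of the $\pi(x_j)^{\pm 1}$ (with $f(t)=\langle\pi(t)v,w\rangle$, $\|v\|\|w\|\le M$ from Lemma \ref{lem.various}), use that this operator has norm at most $1$, split the expansion of the odd powers into non-trivial words (each contributing at most $\epsilon$ by hypothesis) and trivial words (counted by Lemma \ref{lem.ct}, each bounded crudely by $M$ -- no ``collapsing'' of trivial words is needed, only their count), and choose $l=O(M)$, $k=O(M^3)$. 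However, there is a genuine gap at the heart of your sketch: you never produce the large main term that the upper bound $|\langle T_{2l+1}(R)v,w\rangle|\le\|v\|\|w\|\le M$ is supposed to contradict. Your description of where the largeness comes from (``the total weight of trivial words is outweighed by the weight on non-trivial words'', a ``main-term weight scaling like $k^{r+1/2}$'') does not give a lower bound on anything: non-trivial words contribute at most $\epsilon$ each and trivial words at most $M\cdot O(r/k)^r$ after normalisation, so all the terms you discuss are error terms. In the paper the main term is the \emph{degree-one} term of the Chebyshev polynomial: its coefficient has modulus $2l+1$, and it multiplies $\frac{1}{2k}\sum_j\bigl(\omega_jf(x_j)+\omega_j^{-1}f(x_j^{-1})\bigr)$, which is bounded below by an absolute constant ($\ge\frac18$); with $l=CM$ for large $C$ this beats $M$ plus the error terms.

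This is where the second missing idea comes in. To bound the linear term below you cannot just average $\frac12(\pi(x_j)+\pi(x_j)^*)$: for integer-valued $f$ taking both signs, or with $f(x_j^{-1})$ of opposite sign to $f(x_j)$, the sum $\sum_j\bigl(f(x_j)+f(x_j^{-1})\bigr)$ can cancel to zero and your contradiction evaporates (the lemma is not restricted to Boolean $f$). The paper fixes this by choosing unimodular phases $\omega_j\in\{\pm1,\pm i\}$ according to the signs of $f(x_j)$ and $f(x_j^{-1})$, so that each summand $\omega_jf(x_j)+\omega_j^{-1}f(x_j^{-1})$ is (purely real or purely imaginary and) of modulus at least $|f(x_j)|\ge 1-\epsilon$, giving $\bigl|\frac{1}{2k}\sum_j(\omega_jf(x_j)+\omega_j^{-1}f(x_j^{-1}))\bigr|\ge\frac18$ while keeping $R=\frac{1}{2k}\sum_j(\omega_j\pi(x_j)+\omega_j^{-1}\pi(x_j)^*)$ hermitian with spectrum in $[-1,1]$, which is exactly what the Chebyshev bound needs. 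A smaller point: the $\ell_1$-mass of the Chebyshev ``measure'' is $\exp(O(l))$, not $O(1)$, so the duality must be run with the operator norm of $T_{2l+1}(R)$ (spectral radius $\le 1$ via the functional calculus), not with total variation; you gesture at this but should state it as the operative bound, since it is also what forces $\epsilon\le\exp(-CM)$ to absorb the $\epsilon\sum_r|a_{2r+1}|=\epsilon\exp(O(l))$ error.
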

\begin{proof}
Since $f \in A(G)$ there is a Hilbert space $H$ with elements $v,w \in H$ and a homomorphism $\pi:G \rightarrow \Aut(H)$ such that $f(t)=\langle \pi(t)v,w\rangle$ for all $t \in G$, and $\|v\|\|w\|\leq M$.

Let $k$ and $l$ be natural numbers to be optimised later and suppose that $A:=\supp f_\Z$ is \emph{not} $(k,l)$-arithmetically connected.  It follows that there is some $x \in A^k$ such that for every $1 \leq r \leq l$, $i \in N_{k,2r+1}$ and $\sigma \in \{-1,1\}^{2r+1}$ we have
\begin{equation*}
\left|f \left(x_{i_1}^{\sigma_1} \cdots x_{i_{2r+1}}^{\sigma_{2r+1}}\right)\right| \leq \epsilon.
\end{equation*}
We define an auxiliary function $\omega \in (S^1)^k$ as follows.  Since the function $f$ is real and since $x_j \in A$ we see that $|f(x_j)| \geq 1-\epsilon \geq \frac{1}{2}$ for all $j \in [k]$ so that $\sgn f(x_j) \neq 0$. For each $j \in [k]$ if $\sgn f(x_j^{-1})=0$ or $\sgn f(x_j)=\sgn f(x_j^{-1})$ then set $\omega_j=\sgn f(x_j)$ otherwise set $\omega_j=i\sgn f(x_j)$.  It follows that
\begin{align*}
\left|\frac{1}{2k}\sum_{j=1}^k{(\omega_jf(x_j) + \omega_j^{-1}f(x_j^{-1}))}\right|& \geq  \frac{1}{2k}\left(\left|\sum_{j:\omega_j \in \R}^k{|f(x_j)|}\right|^2+\left|\sum_{j:\omega_j \not\in \R}^k{|f(x_j)|}\right|^2 \right)^{\frac{1}{2}} \geq \frac{1}{8}.
\end{align*}
The operator
\begin{equation*}
R:=\frac{1}{2k}\sum_{j=1}^k{\left(\omega_j\pi(x_j) + \omega_j^{-1}\pi(x_j)^*\right)}
\end{equation*}
is hermitian and has eigenvalues in $[-1,1]$.  Let $T_{2l+1}(X)=a_1X+a_3X^3+\cdots + a_{2l+1}X^{2l+1}$ be the Chebychev polynomial (of the first kind) of degree $2l+1$.  Since $T_{2l+1}$ maps $[-1,1]$ to $[-1,1]$, the spectral radius of $T_{2l+1}(R)$ is at most $1$.

Now, for $1 \leq r \leq l$ we have
\begin{align*}
& \left\langle \left(\frac{1}{2k}\sum_{j=1}^k{\left(\omega_j\pi(x_j) + \omega_j^{-1}\pi(x_j)^*\right)}\right)^{2r+1}v,w\right\rangle\\
 & \qquad \qquad= \frac{1}{(2k)^{2r+1}}\sum_{i \in N_{k,2r+1},\sigma \in \{-1,1\}^{2r+1}}{\omega_{i_1}^{\sigma_{1}}\cdots \omega_{i_{2r+1}}^{\sigma_{2r+1}}f \left(x_{i_1}^{\sigma_1} \cdots x_{i_{2r+1}}^{\sigma_{2r+1}}\right)}\\ &\qquad \qquad \qquad \qquad  + \frac{1}{(2k)^{2r+1}}\sum_{i \in T_{k,2r+1},\sigma \in \{-1,1\}^{2r+1}}{\omega_{i_1}^{\sigma_{1}}\cdots \omega_{i_{2r+1}}^{\sigma_{2r+1}}f \left(x_{i_1}^{\sigma_1} \cdots x_{i_{2r+1}}^{\sigma_{2r+1}}\right)},
\end{align*}
from which it follows by Lemma \ref{lem.ct} that
\begin{align*}
\left|\left\langle \left(\frac{1}{2k}\sum_{j=1}^k{\left(\omega_j\pi(x_j) + \omega_j^{-1}\pi(x_j)^*\right)}\right)^{2r+1}v,w\right\rangle\right| & \leq \epsilon + MO\left(\frac{r}{k}\right)^r.
\end{align*}
Since $|a_1|=2l+1$, $|a_{2r+1}| = O(l/r)^{2r+1}$ for $0 \leq r \leq l$ and $\sum_{r=1}^l{|a_{2r+1}|}=\exp(O(l))$ we have
\begin{align*}
M \geq \|v\|\|w\| &\geq \left|\left\langle T_{2l+1}\left(\frac{1}{2k}\sum_{j=1}^k{\left(\omega_j\pi(x_j) +\omega_j^{-1}\pi(x_j)^*\right)}\right)v,w\right\rangle\right|\\
& = \left|\sum_{r=0}^l{a_{2r+1} \left\langle \left(\frac{1}{2k}\sum_{j=1}^k{\left(\omega_j\pi(x_j) + \omega_j^{-1}\pi(x_j)^*\right)}\right)^{2r+1}v,w\right\rangle}\right|\\
& \geq |a_1|\left|\frac{1}{2k}\sum_{j=1}^k{(\omega_jf(x_j)+\omega_j^{-1}f(x_j^{-1}))}\right| - \sum_{r=1}^l{|a_{2r+1}|\left(\epsilon + MO\left(\frac{r}{k}\right)^r\right)}\\
& \geq \frac{(2l+1)}{8} - \epsilon \exp(O(l)) - O(Ml^3/k)
\end{align*}
provided $l^2 \leq ck$ for some sufficiently small absolute $c>0$.  Let $k=c'l^3$ and note that for $l=CM$ sufficiently large we arrive at a contradiction.  The result is proved.
\end{proof}
The next lemma is the part of our argument that depends most on $G$ being finite -- we need some way to measure the size of the set $A$ so that it is non-trivial.
\begin{lemma}\label{lem.s}
Suppose that $A$ is $(k,l)$-arithmetically connected.  Then there is some $S \subset A$ such that $|S| = k^{-O(l)}|A|$ and $|SS^{-1}| \leq k^{O(l)}|S|$.
\end{lemma}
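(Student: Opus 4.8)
\emph{Proof plan.} The plan is to pigeonhole on the ``type'' of the word that witnesses arithmetic connectivity, thereby reducing matters to a bilinear statement about a dense subset of $A\times A$ whose product map lands in a translate of $A$, and then to extract a small-doubling subset via the non-commutative Balog--Szemer\'edi--Gowers theorem. Write $n:=|A|$. We may assume $k\geq 3$, since for $k\leq 2$ the set $N_{k,2r+1}$ is empty for every $r\geq 1$ and hence no non-empty set is $(k,l)$-arithmetically connected. For each $x\in A^k$ fix a witnessing triple $(r,i,\sigma)$ with $1\leq r\leq l$, $i\in N_{k,2r+1}$ and $\sigma\in\{-1,1\}^{2r+1}$ such that $x_{i_1}^{\sigma_1}\cdots x_{i_{2r+1}}^{\sigma_{2r+1}}\in A$. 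The number of available triples is at most $\sum_{r=1}^{l}k^{2r+1}2^{2r+1}\leq k^{O(l)}$, so there is a single triple $(r,i,\sigma)$ which is the chosen witness for a set of $x$ of density at least $k^{-O(l)}$ in $A^k$. The corresponding word depends only on the coordinates indexed by $J:=\{i_1,\dots,i_{2r+1}\}$ (which has $|J|\leq 2l+1$), so projecting onto $A^J$ produces a set $B\subseteq A^J$ with $|B|\geq n^{|J|}/k^{O(l)}$ such that $x_{i_1}^{\sigma_1}\cdots x_{i_{2r+1}}^{\sigma_{2r+1}}\in A$ for all $(x_j)_{j\in J}\in B$.

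The key point is to use non-triviality of $i$: by definition at least two indices $a,b\in J$ occur exactly once among $i_1,\dots,i_{2r+1}$. Fixing the remaining $|J|-2$ coordinates at values chosen by averaging, we obtain a slice $B_1\subseteq A^{\{a,b\}}$ with $|B_1|\geq n^2/k^{O(l)}$ on which the word still lands in $A$. Since $x_a$ and $x_b$ each appear exactly once, with the other coordinates frozen the word collapses to $p\,x_a^{\epsilon}\,q\,x_b^{\epsilon'}\,r$ for fixed elements $p,q,r\in G$ and signs $\epsilon,\epsilon'\in\{-1,1\}$. Putting $u:=x_a^{\epsilon}$ and $w:=q\,x_b^{\epsilon'}$ identifies $A^{\{a,b\}}$ bijectively with $U\times W$, where $U:=A^{\epsilon}$ and $W:=qA^{\epsilon'}$ (so $|U|=|W|=n$), and the word condition becomes $uw\in p^{-1}Ar^{-1}=:A''$ with $|A''|=n$. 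Writing $\widetilde B\subseteq U\times W$ for the image of $B_1$, we have $|\widetilde B|\geq n^2/k^{O(l)}$ and $uw\in A''$ for every $(u,w)\in\widetilde B$. (It is precisely here that non-triviality earns its keep: with only one ``linear'' variable the analogous reduction gives a vacuous one-variable containment, whereas two linear variables give a genuine bilinear problem.)

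Now Cauchy--Schwarz shows that the energy $|\{((u,w),(u',w'))\in\widetilde B\times\widetilde B:uw=u'w'\}|$ is at least $|\widetilde B|^2/|A''|\geq n^3/k^{O(l)}$, which is within a factor $k^{O(l)}$ of its maximum possible value $n^3$. Feeding this into the non-commutative Balog--Szemer\'edi--Gowers theorem (together with the suite of Ruzsa/Pl\"unnecke-type product-set estimates valid in arbitrary groups) yields a subset $U'\subseteq U$ with $|U'|\geq n/k^{O(l)}$ and all of the quantities $|(U')^{\pm1}(U')^{\pm1}|$ at most $k^{O(l)}|U'|$. Since $U'$ is a subset of $A$ when $\epsilon=1$, and the inverse of a subset $S\subseteq A$ when $\epsilon=-1$, and since passing to inverses merely interchanges the roles of $|XX^{-1}|$ and $|X^{-1}X|$, in either case we recover a set $S\subseteq A$ with $|S|\geq n/k^{O(l)}$ and $|SS^{-1}|\leq k^{O(l)}|S|$, which is the assertion of the lemma.

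The only genuinely delicate step is the last one: it requires a non-abelian Balog--Szemer\'edi--Gowers theorem with polynomial bounds, and one must be careful about the placement of inverses when moving between the various product-set inequalities and when transporting the small-doubling conclusion from $U$ back to $A$. The earlier steps --- the pigeonhole over word types, the use of non-triviality to isolate two variables of multiplicity one, and the reparametrisation of the frozen word as a product $uw$ --- are routine.
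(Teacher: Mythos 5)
Up to the energy estimate your argument is essentially the paper's: pigeonholing over the at most $(2k)^{O(l)}$ witnessing triples, using non-triviality to isolate two indices of multiplicity one, freezing the remaining coordinates by averaging, and applying Cauchy--Schwarz is exactly how the paper proceeds. The gap is in the final step, where you apply the non-commutative Balog--Szemer\'edi--Gowers theorem to the pair $(U,W)=(A^{\epsilon},qA^{\epsilon'})$ and assert that the structured subset $U'$ satisfies bounds on \emph{all four} quantities $|(U')^{\pm 1}(U')^{\pm 1}|$. What BSG (plus the Ruzsa triangle inequality, which is what survives of ``Pl\"unnecke-type'' calculus in general groups) gives is subsets $U'\subseteq U$, $W'\subseteq W$ with $|U'W'|$ small, hence control of $|U'(U')^{-1}|$ and of $|(W')^{-1}W'|$, but of neither $|(U')^{-1}U'|$ nor $|W'(W')^{-1}|$; and no general product-set inequality can supply these, since in a non-abelian group $|XX^{-1}|\leq K|X|$ does not force $|X^{-1}X|\leq K^{O(1)}|X|$ --- for instance in the affine group over $\F_p$ the set $X=\{x\mapsto ax+b: a\in \F_p^{*}, b\in\{0,1\}\}$ has $|XX^{-1}|\leq 2|X|$ while $|X^{-1}X|$ has order $p|X|$. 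So the sentence ``passing to inverses merely interchanges the roles of $|XX^{-1}|$ and $|X^{-1}X|$'' is exactly where the non-abelian difficulty is hiding: in the sign pattern $\epsilon=-1$, $\epsilon'=+1$ your construction only produces $S\subseteq A$ with $|S^{-1}S|$ small (via $S=(U')^{-1}$ or $S=q^{-1}W'$), not $|SS^{-1}|$ small, which is what the lemma asserts and what its downstream use (Proposition \ref{prop.infstruct} feeding $|SS^{-1}|\leq K|S|$ into Lemma \ref{lem.fr}) requires.

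The paper avoids this by never letting an inverted copy of $A$ be the set that BSG structures: it absorbs the prefix into the variable, setting $u:=wx_{i_{s_1}}^{\sigma_{s_1}}$ and $v^{-1}:=yx_{i_{s_2}}^{\sigma_{s_2}}$, bounds the count by $\langle 1_{wA^{\sigma_{s_1}}},1_{Az^{-1}}\ast 1_{A^{-\sigma_{s_2}}y^{-1}}\rangle$, and Cauchy--Schwarz then lower bounds $E(Az^{-1},A^{-\sigma_{s_2}}y^{-1})$; BSG is applied so that the structured subset $A'$ lies in $Az^{-1}$, a right translate of $A$ itself, whence $S:=A'z\subseteq A$ and $SS^{-1}=A'(A')^{-1}$ exactly. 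You can repair your write-up in the same spirit without changing your parametrisation: from $uw\in A''$ set $c:=uw$ and count pairs $(c,w)\in A''\times W$ with $cw^{-1}\in U$; Cauchy--Schwarz gives large energy for the pair $(A'',W)$, and applying BSG with $A''=p^{-1}Ar^{-1}$ as the structured set yields $A'''\subseteq A''$ with $|A'''(A''')^{-1}|\leq k^{O(l)}|A'''|$, so that $S:=pA'''r\subseteq A$ has $SS^{-1}=pA'''(A''')^{-1}p^{-1}$, of the same cardinality. The point is that two-sided translation of the structured set is harmless for $|SS^{-1}|$, whereas inversion is not.
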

\begin{proof}
Since
\begin{equation*}
\sum_{1 \leq r \leq l}{\sum_{i \in N_{k,2r+1}}{\sum_{\sigma \in\{-1,1\}^{2r+1}}{1}}} = \sum_{1 \leq r \leq l}{|N_{k,2r+1}|\left|\{-1,1\}^{2r+1}\right|} \leq \sum_{r=1}^l{(2k)^{2r+1}} \leq (2k)^{2l+3}
\end{equation*}
we conclude by averaging that there is some $1 \leq r \leq l$, $i \in N_{k,2r+1}$ and $\sigma \in \{-1,1\}^{2r+1}$ such that
\begin{equation*}
\sum_{x_1,\dots,x_k}{1_A(x_1)\dots1_A(x_k)1_A\left(x_{i_1}^{\sigma_1}\cdots x_{i_{2r+1}}^{\sigma_{2r+1}}\right)} \geq \frac{1}{(2k)^{2l+3}}|A|^k.
\end{equation*}
Since $i \in N_{k,2r+1}$ it follows that
\begin{equation*}
|\{j \in [k]: |\{s \in [2r+1]: i_s=j\}|=1\} \geq 2,
\end{equation*}
and so there are elements $s_1<s_2 \in [2r+1]$ such that
\begin{equation}\label{eqn.preimageone}
|\{s \in [2r+1]: i_s=i_{s_1}\}|=1 \text{ and }|\{s \in [2r+1]: i_s=i_{s_2}\}|=1.
\end{equation}
Let $S:=[k] \setminus \{i_{s_1},i_{s_2}\}$.  Then
\begin{equation*}
\sum_{\substack{x_{i_1},\dots,x_{i_{s_1-1}},x_{i_{s_1+1}},\dots, \\ x_{i_{s_2-1}},x_{i_{s_2+1}},\dots,x_{i_{2r+1}}}}{1_A\left(x_{i_1}\right)\cdots 1_A\left(x_{i_{s_1-1}}\right)1_A\left(x_{i_{s_1+1}}\right)\cdots 1_A\left(x_{i_{s_2-1}}\right)1_A\left(x_{i_{s_2+1}}\right)\cdots 1_A\left(x_{i_{2r+1}}\right)}
\end{equation*}
equals $|A|^{k-2}$.  Averaging gives $x_{i_1} ,\dots ,x_{i_{s_1-1}} ,x_{i_{s_1+1}} ,\dots ,x_{i_{s_2-1}} ,x_{i_{s_2+1}},\dots,x_{i_{2r+1}} \in A$ such that
\begin{align*}
& \sum_{x_{i_{s_1}},x_{i_{s_2}}}{1_A\left(x_{i_{s_1}}\right)1_A\left(x_{i_{s_2}}\right)}\\
& \qquad \qquad \qquad \times 1_A\left(x_{i_1}^{\sigma_1}\cdots x_{i_{s_1-1}}^{\sigma_{s_1-1}}\cdot x_{i_{s_1}}^{\sigma_{s_1}}\cdot x_{i_{s_1+1}}^{\sigma_{s_1+1}}\cdots x_{i_{s_2-1}}^{\sigma_{s_2-1}}\cdot x_{i_{s_2}}^{\sigma_{s_2}}\cdot x_{i_{s_2+1}}^{\sigma_{s_2+1}}\cdots x_{i_{2r+1}}^{\sigma_{2r+1}}\right)\\ & \qquad \qquad  \qquad \qquad \qquad\geq \frac{1}{|A|^{k-2}}\cdot \frac{1}{(2k)^{2l+3}}|A|^k = \frac{|A|^2}{(2k)^{2l+3}}.
\end{align*}
Write
\begin{equation*}
w:=x_{i_1}^{\sigma_1}\cdots  x_{i_{s_1-1}}^{\sigma_{s_1-1}}, y:=x_{i_{s_1+1}}^{\sigma_{s_1+1}}\cdots x_{i_{s_2-1}}^{\sigma_{s_2-1}} \text{ and } z:= x_{i_{s_2+1}}^{\sigma_{s_2+1}}\cdots x_{i_{2r+1}}^{\sigma_{2r+1}},
\end{equation*}
where each of these products may be empty and if so is the identity.  In light of (\ref{eqn.preimageone}) none of $w,y$ or $z$ depends on $x_{i_{s_1}}$ or $x_{i_{s_2}}$.  By the change of variables
\begin{equation*}
u:=wx_{i_{s_1}}^{\sigma_{s_1}} \text{ and } v^{-1}:=yx_{i_{s_2}}^{\sigma_{s_2}}
\end{equation*}
we have
\begin{align*}
\frac{|A|^2}{(2k)^{2l+3}} & \leq \sum_{u,v}{1_{wA^{\sigma_{s_1}}}(u) 1_{A^{-\sigma_{s_2}}y^{-1}}(v)1_{Az^{-1}}(uv^{-1})}\\
& =  \langle 1_{wA^{\sigma_{s_1}}}, 1_{Az^{-1}} \ast 1_{A^{-\sigma_{s_2}}y^{-1}}\rangle_{\ell_2(G)} \leq \|1_{wA^{\sigma_{s_1}}}\|_{\ell_2(G)} \| 1_{Az^{-1}} \ast 1_{A^{-\sigma_{s_2}}y^{-1}}\|_{\ell_2(G)}.
\end{align*}
It follows that, in the notation of Tao and Vu \cite[(2.37)]{taovu::}, we have $E(Az^{-1},A^{-\sigma_{s_2}}y^{-1}) \geq k^{-O(l)}|A|^3$. Apply the Balog-Szemer{\'e}di-Gowers Lemma \cite[Corollary 2.40]{taovu::} to get $A' \subset Az^{-1}$ such that $|A'| = k^{-O(l)}|A|$ and $|(A')(A')^{-1}| \leq k^{O(l)}|A'|$; the result follows on taking $S:=A'z$ so that $S \subset A$, $|S| = k^{-O(l)}|A|$ and $|SS^{-1}| = |(A'z)(A'z)^{-1}| \leq k^{O(l)}|S|$.
The result is proved.
\end{proof}

\begin{proof}[Proof of Proposition \ref{prop.infstruct}]
This follows immediately on combining Lemma \ref{lem.st} and Lemma \ref{lem.s}.
\end{proof}

\section*{Acknowledgement}

The author should like to thank the referee for thoughtful comments, and identifying an error in the proof of Theorem \ref{thm.ky}.

\bibliographystyle{halpha}

\bibliography{references}

\begin{thebibliography}{FKLS03}
\expandafter\ifx\csname url\endcsname\relax
  \def\url#1{\texttt{#1}}\fi
\expandafter\ifx\csname doi\endcsname\relax
  \def\doi#1{\burlalt{doi:#1}{http://dx.doi.org/#1}}\fi
\expandafter\ifx\csname urlprefix\endcsname\relax\def\urlprefix{URL }\fi
\expandafter\ifx\csname href\endcsname\relax
  \def\href#1#2{#2}\fi
\expandafter\ifx\csname burlalt\endcsname\relax
  \def\burlalt#1#2{\href{#2}{#1}}\fi

\bibitem[Aka08]{aka::0}
A.~Akavia.
\newblock Finding significant {F}ourier transform coefficients
  deterministically and locally.
\newblock {\em Electronic Colloquium on Computational Complexity}, (102):27,
  2008.
\newblock \urlprefix\url{https://eccc.weizmann.ac.il/report/2008/102/download}.

\bibitem[Aka10]{aka::3}
A.~Akavia.
\newblock Deterministic sparse {F}ourier approximation via fooling arithmetic
  progressions.
\newblock In {\em COLT}, pages 381--393, 2010.
\newblock
  \urlprefix\url{http://www.learningtheory.org/colt2010/papers/005akavia.pdf}.

\bibitem[Aka14a]{aka::2}
A.~Akavia.
\newblock Deterministic sparse {F}ourier approximation via approximating
  arithmetic progressions.
\newblock {\em IEEE Transactions on Information Theory}, 60(3):1733--1741,
  2014.
\newblock \doi{10.1109/TIT.2013.2290027}.

\bibitem[Aka14b]{aka::1}
A.~Akavia.
\newblock Explicit small sets with $\varepsilon$-discrepancy on {B}ohr sets.
\newblock {\em Information Processing Letters}, 114(10):564--567, 2014.
\newblock \doi{10.1016/j.ipl.2014.05.002}.

\bibitem[BGT12]{bregretao::0}
E.~Breuillard, B.~J. Green, and T.~C. Tao.
\newblock The structure of approximate groups.
\newblock {\em Publications math{\'e}matiques de l'IH{\'E}S}, 116(1):115--221,
  2012, \burlalt{arXiv:1110.5008}{http://arxiv.org/abs/arXiv:1110.5008}.
\newblock \doi{10.1007/s10240-012-0043-9}.

\bibitem[Bon95]{bon::0}
D.~Boneh.
\newblock Learning using group representations (extended abstract).
\newblock In {\em Proceedings of the Eighth Annual Conference on Computational
  Learning Theory}, COLT '95, pages 418--426, New York, NY, USA, 1995. ACM.
\newblock \doi{10.1145/225298.225349}.

\bibitem[C{\L}S13]{croabasis::}
E.~S. Croot, I.~{\L}aba, and O.~Sisask.
\newblock Arithmetic progressions in sumsets and {$L^p$}-almost-periodicity.
\newblock {\em Combin. Probab. Comput.}, 22(3):351--365, 2013,
  \burlalt{arXiv:1103.6000}{http://arxiv.org/abs/arXiv:1103.6000}.
\newblock \doi{10.1017/S0963548313000060}.

\bibitem[Coh60]{coh::}
P.~J. Cohen.
\newblock On a conjecture of {L}ittlewood and idempotent measures.
\newblock {\em Amer. J. Math.}, 82:191--212, 1960.
\newblock \doi{10.2307/2372731}.

\bibitem[CS10]{crosis::}
E.~S. Croot and O.~Sisask.
\newblock A probabilistic technique for finding almost-periods of convolutions.
\newblock {\em Geom. Funct. Anal.}, 20(6):1367--1396, 2010,
  \burlalt{arXiv:1003.2978}{http://arxiv.org/abs/arXiv:1003.2978}.
\newblock \doi{10.1007/s00039-010-0101-8}.

\bibitem[CW13]{czuwoj::}
A.~Czuron and M.~Wojciechowski.
\newblock {On the isomorphisms of Fourier algebras of finite Abelian groups}.
\newblock {\em ArXiv e-prints}, June 2013,
  \burlalt{arXiv:1306.1480}{http://arxiv.org/abs/arXiv:1306.1480}.

\bibitem[Eym64]{eym::}
P.~Eymard.
\newblock L'alg\`ebre de {F}ourier d'un groupe localement compact.
\newblock {\em Bull. Soc. Math. France}, 92:181--236, 1964.
\newblock \doi{doi.org/10.24033/bsmf.1607}.

\bibitem[Fej10]{fej::0}
L.~Fej\'{e}r.
\newblock Lebesguessche {K}onstanten und divergente {F}ourierreihen.
\newblock {\em J. Reine Angew. Math.}, 138:22--53, 1910.
\newblock \doi{10.1515/crll.1910.138.22}.

\bibitem[FKLS03]{forkanlauspr::}
B.~Forrest, E.~Kaniuth, A.~T. Lau, and N.~Spronk.
\newblock Ideals with bounded approximate identities in {F}ourier algebras.
\newblock {\em J. Funct. Anal.}, 203(1):286--304, 2003.
\newblock \doi{10.1016/S0022-1236(02)00121-0}.

\bibitem[FR05]{forrun::}
B.~E. Forrest and V.~Runde.
\newblock Amenability and weak amenability of the {F}ourier algebra.
\newblock {\em Math. Z.}, 250(4):731--744, 2005,
  \burlalt{arXiv:math/0211284}{http://arxiv.org/abs/arXiv:math/0211284}.
\newblock \doi{10.1007/s00209-005-0772-2}.

\bibitem[GK09]{grekon::}
B.~J. Green and S.~V. Konyagin.
\newblock On the {L}ittlewood problem modulo a prime.
\newblock {\em Canad. J. Math.}, 61(1):141--164, 2009,
  \burlalt{arXiv:math/0601565}{http://arxiv.org/abs/arXiv:math/0601565}.
\newblock \doi{10.4153/CJM-2009-007-4}.

\bibitem[GS08a]{gresan::}
B.~J. Green and T.~Sanders.
\newblock Boolean functions with small spectral norm.
\newblock {\em Geom. Funct. Anal.}, 18(1):144--162, 2008,
  \burlalt{arXiv:math/0605524}{http://arxiv.org/abs/arXiv:math/0605524}.
\newblock \doi{10.1007/s00039-008-0654-y}.

\bibitem[GS08b]{gresan::0}
B.~J. Green and T.~Sanders.
\newblock A quantitative version of the idempotent theorem in harmonic
  analysis.
\newblock {\em Ann. of Math. (2)}, 168(3):1025--1054, 2008,
  \burlalt{arXiv:math/0611286}{http://arxiv.org/abs/arXiv:math/0611286}.
\newblock \doi{10.4007/annals.2008.168.1025}.

\bibitem[Hos86]{hos::}
B.~Host.
\newblock Le th\'eor\`eme des idempotents dans {$B(G)$}.
\newblock {\em Bull. Soc. Math. France}, 114(2):215--223, 1986.
\newblock \doi{10.24033/bsmf.2055}.

\bibitem[KM93]{kusman::}
{E}. Kushilevitz and {Y}. Mansour.
\newblock Learning decision trees using the {F}ourier spectrum.
\newblock {\em SIAM Journal on Computing}, 22(6):1331--1348, 1993.
\newblock \doi{10.1137/0222080}.

\bibitem[KS16]{konshk::1}
S.~V. Konyagin and I.~D. Shkredov.
\newblock On the {W}iener norm of subsets of {$\mathbb{Z}/p\mathbb{Z}$} of
  medium size.
\newblock {\em Journal of Mathematical Sciences}, 218(5):599--608, 2016,
  \burlalt{arXiv:1403.8129}{http://arxiv.org/abs/arXiv:1403.8129}.
\newblock \doi{10.1007/s10958-016-3045-1}.

\bibitem[Lef72]{lef::}
M.~Lefranc.
\newblock Sur certaines alg\`ebres de fonctions sur un groupe.
\newblock {\em C. R. Acad. Sci. Paris S\'er. A-B}, 274:A1882--A1883, 1972.

\bibitem[M{\'e}l82]{mel::}
J.-F. M{\'e}la.
\newblock Mesures {$\varepsilon $}-idempotentes de norme born\'ee.
\newblock {\em Studia Math.}, 72(2):131--149, 1982.
\newblock \doi{10.4064/sm-72-2-131-149}.

\bibitem[O'D14]{odo::1}
R.~O'Donnell.
\newblock {\em Analysis of {B}oolean functions}.
\newblock Cambridge University Press, New York, 2014.
\newblock \doi{10.1017/CBO9781139814782}.

\bibitem[Rud90]{rud::1}
W.~Rudin.
\newblock {\em Fourier analysis on groups}.
\newblock Wiley Classics Library. John Wiley \& Sons Inc., New York, 1990.
\newblock \doi{10.1002/9781118165621}.
\newblock Reprint of the 1962 original, A Wiley-Interscience Publication.

\bibitem[Run07]{run::}
V.~Runde.
\newblock Cohen-{H}ost type idempotent theorems for representations on {B}anach
  spaces and applications to {F}ig\`a-{T}alamanca-{H}erz algebras.
\newblock {\em J. Math. Anal. Appl.}, 329(1):736--751, 2007,
  \burlalt{arXiv:math/0408388}{http://arxiv.org/abs/arXiv:math/0408388}.
\newblock \doi{10.1016/j.jmaa.2006.07.007}.

\bibitem[San11]{san::9}
T.~Sanders.
\newblock A quantitative version of the non-abelian idempotent theorem.
\newblock {\em Geom. Funct. Anal.}, 21(1):141--221, 2011,
  \burlalt{arXiv:0912.0308}{http://arxiv.org/abs/arXiv:0912.0308}.
\newblock \doi{10.1007/s00039-010-0107-2}.

\bibitem[San12]{san::00}
T.~Sanders.
\newblock On the {B}ogolyubov-{R}uzsa lemma.
\newblock {\em Anal. PDE}, 5(3):627--655, 2012,
  \burlalt{arXiv:1011.0107}{http://arxiv.org/abs/arXiv:1011.0107}.
\newblock \doi{10.2140/apde.2012.5.627}.

\bibitem[STV17]{shptalvol::1}
A.~Shpilka, A.~Tal, and B.~lee Volk.
\newblock On the structure of {B}oolean functions with small spectral norm.
\newblock {\em Computational Complexity}, 26(1):229--273, Mar 2017,
  \burlalt{arXiv:1304.0371}{http://arxiv.org/abs/arXiv:1304.0371}.
\newblock \doi{10.1007/s00037-015-0110-y}.

\bibitem[TV06]{taovu::}
T.~C. Tao and V.~H. Vu.
\newblock {\em Additive combinatorics}, volume 105 of {\em Cambridge Studies in
  Advanced Mathematics}.
\newblock Cambridge University Press, Cambridge, 2006.
\newblock \doi{10.1017/CBO9780511755149}.

\bibitem[Woj91]{woj::}
P.~Wojtaszczyk.
\newblock {\em Banach spaces for analysts}, volume~25 of {\em Cambridge Studies
  in Advanced Mathematics}.
\newblock Cambridge University Press, Cambridge, 1991.
\newblock \doi{10.1017/CBO9780511608735}.

\bibitem[Woj11]{woj::0}
M.~Wojciechowski.
\newblock The non-equivalence between the trigonometric system and the system
  of functions with pointwise restrictions on values in the uniform and {$L^1$}
  norms.
\newblock {\em Math. Proc. Cambridge Philos. Soc.}, 150(3):561--571, 2011.
\newblock \doi{10.1017/S0305004111000065}.

\bibitem[ZKR03]{zwikraros::}
D.~Zwillinger, S.~G. Krantz, and K.~H. Rosen, editors.
\newblock {\em {CRC} standard mathematical tables and formulae}.
\newblock CRC Press, Boca Raton, FL, 31st edition, 2003.

\end{thebibliography}

\end{document}